\documentclass[11pt]{article}

\usepackage{amsfonts, amsmath, mathtools, amssymb, amsthm, xcolor, enumitem, dsfont, comment}

\usepackage[numbers]{natbib}
\usepackage[pdfauthor = {DM}, colorlinks = true, citecolor = blue]{hyperref} 
\usepackage[english]{babel}

\newtheorem{theorem}{Theorem}[section]

\newtheorem{lemma}{Lemma}[section]
\newtheorem{proposition}{Proposition}[section]
\newtheorem{cor}{Corollary}[section]
\newtheorem{assumption}{Assumption}
\newtheorem{assumptionprime}{Assumption}

\theoremstyle{remark}
\newtheorem{remark}{Remark}[section]

\theoremstyle{definition}

\newcommand{\E}{\mathbb{E}} 
\renewcommand{\P}{\mathbb{P}}
\newcommand{\Var}{\operatorname{Var}} 
\newcommand{\R}{{\mathbb R}}
\newcommand{\N}{{\mathbb N}}
\newcommand{\dd}{\mathrm{d}}
\newcommand{\ii}{\mathfrak{i}}
\newcommand{\bone}{\mathds 1}

\DeclareMathOperator{\Leb}{Leb}

\usepackage{authblk}

\title{Marcinkiewicz--Zygmund-type SLLN for mixed moving average processes}

\author[1]{Danijel Grahovac\thanks{dgrahova@mathos.hr}}
\author[2]{P\'eter Kevei\thanks{kevei@math.u-szeged.hu}}
\author[3]{Dominik Mihal\v{c}i\'c \thanks{dmihalci@mathos.hr}}

\affil[1]{School of Applied Mathematics and Informatics, J. J. Strossmayer University of Osijek, Trg Ljudevita Gaja 6, 31000 Osijek, Croatia}
\affil[2]{Bolyai Institute, University of Szeged, Aradi v\'ertan\'uk tere 1, 6720 Szeged, Hungary}
\affil[3]{School of Applied Mathematics and Informatics, J. J. Strossmayer University of Osijek, Trg Ljudevita Gaja 6, 31000 Osijek, Croatia}

\begin{document}
	
	\maketitle

\begin{abstract}
	
	The Marcinkiewicz--Zygmund theorem is a fundamental result in probability theory that establishes rates of convergence in the strong law of large numbers (SLLN). Although numerous extensions have been developed for dependent sequences, many classes of processes, particularly those exhibiting strong dependence, remain unexplored.
	In this paper, we present a Marcinkiewicz--Zygmund-type SLLN for a class of mixed moving average processes, which form a large and flexible class of stationary infinitely
	divisible processes. In contrast to the classical case, where moments determine the asymptotic behavior, the present setting additionally involves key objects that characterize both dependence and marginal distributions.
	
	\medskip
	
	\noindent\textbf{Keywords:} mixed moving average processes, infinitely divisible random measure, almost sure convergence, strong law of large numbers

    \noindent\textbf{MSC2020:} 60G17, 60G10
\end{abstract}

\section{Introduction}

The celebrated Marcinkiewicz--Zygmund strong law of large numbers \cite{gut2013, Marcinkiewicz1937} is a refinement of the classical SLLN stating that, for a sequence of independent identically distributed random variables $(Y_n, \, n \in \N)$, if $\E |Y_n|^\gamma<\infty$ for $\gamma \in (0,2)$, then
\begin{equation}\label{eq:MZclassical}
	\frac{S_n-\bone(\gamma \geq 1) \E S_n}{n^{1/\gamma}} \to 0 \quad \text{a.s.},
\end{equation}
where $S_n = \sum_{i = 1}^n Y_i$. 

There is an abundance of papers extending this result to sequences of dependent random variables. In particular, it was studied under various mixing conditions in \cite{berbee1987, chandra1996, rio1995, shao1995}. Further progress was made in \cite{louhichi2000}, where the SLLN for linear processes was obtained, and in \cite{fazekas2001}, which provided a general approach for analyzing the rate of convergence in the SLLN. A well-known SLLN for $\R$-valued L\'evy processes was recently extended to Markov processes in \cite{kyprianou2025, kyprianou2020,  yaran2024}. Recent contributions to this area also include \cite{anh2021, behme2025, benhariz2025}.

In this paper, we prove a Marcinkiewicz–Zygmund-type SLLN for infinitely divisible mixed moving average (MMA) processes. MMA processes are defined by the stochastic integral
\begin{equation}\label{eq:mma:def:int}
	X(t) = \int_{V \times \R} f(x, t-s) \Lambda(\dd x, \dd s), \quad t \geq 0,
\end{equation}
where $(V, \mathcal{B}(V), \pi )$ is a $\sigma$-finite measure space, $f\colon V\times \R \to [0,\infty)$ is a measurable function and $\Lambda$ is an infinitely divisible independently scattered random measure determined by the L\'evy--Khintchine triplet $(a,b,\lambda )$ and control measure $\pi  \times \Leb$ (see Section \ref{sec-prelim} and \cite{barndorff2018book, samorodnitsky2016} for details).
MMA processes form a large and flexible class of stationary, infinitely divisible processes. Among them are the superpositions of Ornstein--Uhlenbeck (supOU) processes introduced by Barndorff-Nielsen in \cite{barndorff2000supOU}, for which $V=(0,\infty)$ and the integrand in \eqref{eq:mma:def:int} is
\begin{equation*}
	f(x, v) = e^{-x v} \bone(v \geq 0), \quad x > 0.
\end{equation*}
A Marcinkiewicz--Zygmund-type SLLN for supOU processes was obtained in \cite{grahovac2025}. Our main results are far-reaching generalizations of Theorems 1 and 2 in \cite{grahovac2025} for supOU processes. While the overall strategy of the proofs is inspired by the supOU case, the extension to general kernel functions requires significantly more delicate conditions and substantially more involved analysis.

Another important class of MMA processes is trawl processes \cite{barndorff2011statID,barndorff2014intTrawl}, obtained by taking
\begin{equation*}
	f(x, v) = \bone(v > 0, \, 0 \leq x \leq \psi(v)), \quad x \in (0, \psi(0)],
\end{equation*}
for a suitable function $\psi$, $V = (0, \psi(0)]$ and $\pi  \equiv \Leb$ (see Section \ref{sec-examples} for details). 

In particular, if $V$ is a one-point set, \eqref{eq:mma:def:int} reduces to a L\'evy driven moving average (MA) process
\begin{equation}\label{eq:ma:def}
	X(t) = \int_{\R} f(t-s) L(\dd s), \quad t \in \R,
\end{equation}
where $L$ is a two-sided L\'evy process with L\'evy--Khintchine triplet $(a,b,\lambda )$. 
Asymptotic behavior of MA processes has been investigated in numerous works; see, for example, 
\cite{basseOconnor2019functionals, basseOconnor2016power, basseOconnor2016semimart,  benassi2004roughness,bender2012finite,marquardt2006fractional, ronnielsen2022extremes} among many others. 

\medskip

Since we are dealing with continuous time processes, a natural analogue of the partial sums from \eqref{eq:MZclassical} is the time integrated process $X^* = \{X^*(t), \, t \geq 0\}$ given by
\begin{equation*}\label{eq:X*:int}
	X^*(t) = \int_0^t X(u) \dd u.
\end{equation*}
We show that the role of the moment condition in Marcinkiewicz--Zygmund SLLN \eqref{eq:MZclassical} is played by the following condition
\begin{equation} \label{eq:main:con} 
	\int_V \int_{\R} |z|^{\gamma} f_1(x)^{\gamma} \bone (|z|f_1(x) > 1) 
     \lambda (\dd z) \pi (\dd x) < \infty,
\end{equation}
where $f_1(x) = \int_0^\infty f(x,v) \dd v$. Provided that some additional assumptions hold for the kernel function $f$ (see Section \ref{sec-prelim} for details), we show that in the typical case, if \eqref{eq:main:con} holds with $\gamma \in (0,2)$, then
\begin{equation*}
	\lim_{t \to \infty} \frac{X^*(t) - \bone (\gamma \geq 1) \E X^*(t)}{t^{1/\gamma}} = 0 \quad \text{a.s.}	
\end{equation*}
Additionally, if \eqref{eq:main:con} holds with $\gamma = 2$, then we have a law of iterated logarithm
\begin{equation*}
	\limsup_{t \to \infty} \frac{|X^*(t) - \E X^*(t) |}{\sqrt{2 t \log \log t}} = \sqrt{\Var (X^*(1))} \quad \text{a.s.}
\end{equation*}
Note that if $V$ is a one point set, then \eqref{eq:main:con} reduces to $\int_{\R} |z|^{\gamma} \bone (|z| > 1) \lambda (\dd z) < \infty$, which is equivalent to $\E |L(1)|^\gamma < \infty$ for $L$ being the driving L\'evy process in \eqref{eq:ma:def} (see \cite{sato2013}). In general, however, \eqref{eq:main:con} depends not only on the tail behavior of the L\'evy measure $\lambda $, but also on its behavior at zero and on the behavior of $\pi$ and $f_1$ both near zero and at infinity. 

Our results provide new insight into the almost sure behavior of processes under strong dependence. Results of this type are scarce, with only a few known contributions, including \cite{kouritzin2022, louhichi2000} and \cite{fuchs2013mixing}, where the mixing property was shown for MMA processes. Moreover, we precisely quantify how the almost sure rate of growth depends on the parameters governing the behavior of $\lambda $, $\pi $, and $f_1$ at both zero and infinity. The results apply to a broad class of processes, and the examples we present demonstrate that our assumptions are straightforward to verify in concrete cases.

MMA processes have the general form of an aggregation of simple processes. For instance, if $\pi $ is a measure with finite support, then the corresponding MMA process reduces to a finite sum of independent MA processes. Such aggregated models have found numerous applications in economics and finance \cite{barndorff2013volatility, fuchs2013mixing, granger1980}.

The paper is organized as follows. Section \ref{sec-prelim} introduces the main tools and preliminaries, and also lists the general assumptions together with a discussion of their restrictiveness. The main results are presented in Section \ref{sec-main}. Section \ref{sec-examples} provides a variety of examples illustrating the applicability of the main theorems. All proofs are collected in Section \ref{sec-proofs}.

\section{Preliminaries and assumptions} \label{sec-prelim}

Throughout the paper, for a measurable function $h$, we denote by $h^{-1}(B)$ the preimage of a set $B$ under $h$, and by $h^{\leftarrow}$ the generalized inverse. The Lebesgue measure on $\R$ is denoted by $\Leb$. For a measure $Q$ on $(0,\infty)$ and $p \in \R$, we set 
\begin{equation*}
	m_p(Q) \coloneqq \int_{(0, \infty)} x^p \, Q(\dd x).
\end{equation*}

\subsection{Infinitely divisible random measures and integrals}

Let $V$ be a Lusin space (i.e.~a Hausdorff space that is the image of a Polish space under a continuous bijection) equipped with the Borel $\sigma$-algebra $\mathcal{B}(V)$ and a $\sigma$-finite measure $\pi$. We consider the product $V \times \R$ equipped with $\mathcal{B}(V \times \R)$, which coincides with the product $\sigma$-algebra $\mathcal{B}(V) \otimes \mathcal{B}(\R)$ (see e.g.~\cite{cohn2013measure}), and denote 
\begin{equation*}
	\mathcal{B}_0(V \times \R) \coloneqq \{A \in \mathcal{B}(V \times \R) \colon 
	(\pi  \times \Leb) (A) < \infty \},
\end{equation*}
where $\Leb$ stands for the Lebesgue measure on $\R$.
Let $\Lambda$ be a homogeneous infinitely divisible independently scattered random measure on $V \times \R$ such that for all $A \in \mathcal{B}_0(V \times \R)$ and $\theta \in \R$ 
\begin{equation} \label{eq:levy-basis-cf}
	\begin{split}
		& \log \E e^{\ii \theta \Lambda(A)} \\ 
		&= 
		(\pi  \times \Leb) (A) \left(
		\ii \theta a - 
		\frac{\theta^2}{2} b  + 
		\int_{\R} \left(e^{\ii \theta z} - 1 - \ii\theta z \bone (|z| \leq 1) \right)
		\lambda (\dd z)\right), 
	\end{split}
\end{equation}
where $a \in \R$, $b \geq 0$ and $\lambda $ is a L\'evy measure on $\R$. Such a random measure is called a L\'evy basis and the quadruple $(a,b, \lambda , \pi )$ will be referred to as its generating quadruple. Moreover, the product measure 
$C_\Lambda  (\pi  \times \Leb)$ is the control measure of $\Lambda$, with 
$C_\Lambda = |a| + b + \int (1 \wedge z^2) \lambda(\dd z)$; see \cite[Section 5.1.3]{barndorff2018book}. 

The integral of a deterministic function with respect to $\Lambda$ is first defined for simple functions. For a general measurable function, the integral is defined as the limit in probability of the integrals of an approximating sequence of simple functions (see \cite{rajput1989} or \cite[Chapter 5]{barndorff2018book} for details). 
The necessary  and sufficient conditions for the $\Lambda$-integrability of some measurable function $f$ were established in \cite[Theorem 2.7]{rajput1989}
(see also \cite[Proposition 34]{barndorff2018book}).
In our framework, \cite[Proposition 34]{barndorff2018book} takes the form
\begin{equation} \label{eq:RR-cond}
\begin{split}
&  \int_V \int_{\R} |U(f(x, v))| \dd v \pi (\dd x) < \infty, \\
&  \int_V \int_{\R} b f(x, v)^2 \dd v \pi (\dd x) < \infty, \\
&  \int_V \int_{\R} V_0(f(x, v)) \dd v \pi (\dd x) < \infty,
\end{split}
\end{equation}
where 
\begin{equation} \label{eq:UV0}
\begin{split}
	U(y) & =  a y + y \int_{\R} z \left( \bone(|y z| \leq 1) - \bone(|z| \leq 1) \right) \lambda (\dd z), \\ 
	V_0(y) & = \int_{\R} \left( 1 \wedge y^2 z^2 \right) \lambda (\dd z). 
\end{split}
\end{equation}

Since the control measure $C_\Lambda( \pi  \times \Leb)$ of the random measure $\Lambda$ is continuous, there exists a modification of $\Lambda$ that admits a L\'evy--It\^o decomposition as in \cite[Theorem 4.5]{pedersen2003} (see also \cite[Proposition 31]{barndorff2018book}). Therefore, 
\begin{equation} \label{eq:levy-ito} 
	\begin{split}
		\Lambda(\dd x, \dd s) = 
		a &\, \pi(\dd x ) \dd s + 
		\Lambda^G(\dd x, \dd s) + 
		\int_{|z| \leq 1 } z (\mu - \nu ) (\dd x, \dd s, \dd z) \\ 
		&+ \int_{|z| > 1} z \mu(\dd x, \dd s, \dd z),
	\end{split}
\end{equation}
where $\Lambda^G$ is a Gaussian random measure with generating quadruple $(0, b, 0, \pi )$ and $\mu$ is a Poisson random measure on $V \times \R \times \R$ with intensity measure $\nu (\dd x, \dd s, \dd z) = \pi (\dd x) \dd s \lambda (\dd z)$.

\subsection{Mixed moving average processes}\label{subsec:mma}

Let $\Lambda$ be an infinitely divisible random measure on $V \times \R$ as in \eqref{eq:levy-basis-cf}, and let $f \colon V \times \R \to [0,\infty)$ be a $\Lambda$-integrable function such that $f \in L^1(\pi  \times \Leb)$. A mixed moving average (MMA) process $X = \{X(t), \, t \in \R\}$ is defined by
\begin{equation}\label{eq:mma:def}
	X(t) = \int_{V \times \R} f(x, t-s) \Lambda(\dd x, \dd s).
\end{equation}
Clearly, $X$ is a strictly stationary process. We restrict our attention to causal processes by assuming $f(x,v) = 0$ for $v < 0$. 

The cumulative process in the context of Marcinkiewicz--Zygmund SLLN is the time integrated process
\begin{equation}\label{eq:X*}
	X^*(t) = \int_0^t X(u) \dd u, \quad t\geq 0.
\end{equation}
The process $X^*$ has stationary increments and it falls in the class of stationary increments mixed moving average (SIMMA) processes \cite{basseOconnor2013finiteVar}. 

We introduce the functions
\begin{equation}\label{eq:f1f2}
	f_1(x) \coloneqq \int_0^\infty f(x,v) \dd v, \quad f_2(x,u) \coloneqq \int_u^\infty f(x,v) \dd v, \quad x \in V, \, u \geq 0.
\end{equation} 
Provided that we can interchange the order of integration between the stochastic integral defining $X$ and the time integral in \eqref{eq:X*}, the integrated process can be represented as follows
\begin{equation} \label{eq:main-decomp}
	\begin{split}
		X^*(t) =& \; \int_{V \times \R} 
		\left(
		f_2(x, -s) - f_2(x, t-s)
		\right) \bone(s \leq 0)
		\Lambda(\dd x, \dd s) \\ 
		&+ 
		\int_{V \times \R}
		\left(
		f_1(x) - f_2(x, t-s)
		\right) \bone(0 < s \leq t)	 		
		\Lambda(\dd x, \dd s).
	\end{split}
\end{equation}
To justify the interchange of the order of integration, we use the stochastic Fubini theorem \cite[Theorem 3.1]{barndorff2011qou} when $\E |X(1)|<\infty$. In the remaining case, additional assumptions are required, namely,
\begin{align}
	\int_V \int_\R \int_{|z|\leq 1} |z| f(x,s) \bone(|z| f(x,s) > 1)
	\lambda(\dd z) \dd s \pi(\dd x) < \infty, \label{eq:fubini-cond-1}\\
	\int_V \int_{|z|>1} (1 \wedge |z| f_1(x) ) \bone(f_1(x) \leq 1) \lambda(\dd z) \pi(\dd x) < \infty,\label{eq:fubini-cond-2}
\end{align}
and that there exists a function $g \colon V \times \R \to [0,\infty)$ such that for every $t>0$, $x \in V$ and $s\leq 0$ 
\begin{equation} \label{eq:fubini-cond-3}
	\begin{split}
		&\int_0^t f(x,u-s) \dd u \leq t g(x,-s) \ \text{ and}\\
		&\int_V \int_0^{\infty} \int_{|z|>1}
		\left( 1  \wedge |z| g(x,s) \right) \lambda(\dd z) \dd s \pi(\dd x) < \infty.
	\end{split}
\end{equation}
We assume these conditions to hold throughout the paper. We check them for examples considered in Section \ref{sec-examples}.

\begin{lemma} \label{lemma:fubini}
	If either $\E |X(1)| < \infty$, or \eqref{eq:fubini-cond-1}, \eqref{eq:fubini-cond-2} and \eqref{eq:fubini-cond-3} hold, then 
	\begin{equation*}
		X^*(t) = \int_{V \times \R} \int_0^t f(x, u-s) \dd u \Lambda(\dd x, \dd s).
	\end{equation*}	
\end{lemma}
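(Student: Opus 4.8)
The plan is to prove the representation separately under the two hypotheses, reducing the general case to the case of finite first moment. Assume first $\E|X(1)|<\infty$. Then stationarity gives $\E\int_0^t|X(u)|\,\dd u=t\,\E|X(1)|<\infty$, so $X^*(t)$ is almost surely finite; substituting $X(u)=\int_{V\times\R}f(x,u-s)\,\Lambda(\dd x,\dd s)$ into $\int_0^tX(u)\,\dd u$, I would invoke the stochastic Fubini theorem \cite[Theorem 3.1]{barndorff2011qou} to interchange the Lebesgue integral over $u\in[0,t]$ with the stochastic integral. Its hypotheses reduce here to the joint integrability $\int_0^t\E|X(u)|\,\dd u<\infty$ together with the standing $\Lambda$-integrability of $f$, which both hold, so the claimed identity follows in this case.

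In the general case I would use the L\'evy--It\^o decomposition \eqref{eq:levy-ito} to split $\Lambda=\Lambda_{\le}+\Lambda_{>}$, where $\Lambda_{\le}$ is the L\'evy basis with generating quadruple $(a,b,\lambda|_{\{|z|\le1\}},\pi)$ (the drift, Gaussian and compensated small-jump parts) and $\Lambda_{>}(A)=\int_A\int_{|z|>1}z\,\mu(\dd x,\dd s,\dd z)$ is the large-jump part. Comparing the Rajput--Rosinski functionals $U$ and $V_0$ from \eqref{eq:UV0} for $\Lambda$ with those of $\Lambda_{\le}$ and $\Lambda_{>}$, one checks that $\Lambda$-integrability of $f$ together with $f\in L^1(\pi\times\Leb)$ forces $f$ to be integrable with respect to both $\Lambda_{\le}$ and $\Lambda_{>}$; hence $X=X_{\le}+X_{>}$ and $X^*(t)=X_{\le}^*(t)+X_{>}^*(t)$, and it suffices to treat $\Lambda_{\le}$ and $\Lambda_{>}$ separately. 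For $\Lambda_{\le}$, the L\'evy measure of $X_{\le}(1)$ is the image of $\lambda|_{\{|z|\le1\}}$ under $z\mapsto zf(x,1-s)$ integrated against $\pi\times\Leb$, so $\E|X_{\le}(1)|<\infty$ is equivalent \cite[Theorem 25.3]{sato2013} to $\int_V\int_\R\int_{|z|\le1}|z|f(x,s)\bone(|z|f(x,s)>1)\,\lambda(\dd z)\,\dd s\,\pi(\dd x)<\infty$, i.e.\ to \eqref{eq:fubini-cond-1}; the first case then applies to the MMA process driven by $\Lambda_{\le}$ and yields $X_{\le}^*(t)=\int_{V\times\R}h_t(x,s)\,\Lambda_{\le}(\dd x,\dd s)$, where $h_t(x,s):=\int_0^tf(x,u-s)\,\dd u$.

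For the large-jump part, causality ($f(x,v)=0$ for $v<0$) gives $h_t(x,s)=0$ for $s>t$, $h_t(x,s)\le f_1(x)$ for $0<s\le t$, and $h_t(x,s)\le t\,g(x,-s)$ for $s\le0$ by \eqref{eq:fubini-cond-3}. The key estimate is
\begin{equation*}
\int_V\int_{-\infty}^{t}\int_{|z|>1}\bigl(1\wedge|z|\,h_t(x,s)\bigr)\,\lambda(\dd z)\,\dd s\,\pi(\dd x)<\infty,
\end{equation*}
which I would obtain by splitting $\{0<s\le t\}$ from $\{s\le0\}$: on $\{0<s\le t\}$ bound $h_t\le f_1$ and split further according to $f_1(x)\le1$ (use \eqref{eq:fubini-cond-2}) or $f_1(x)>1$ (then $\pi(\{f_1>1\})\le\|f\|_{L^1(\pi)}<\infty$ and $\lambda(\{|z|>1\})<\infty$, so this contribution is at most $t\,\lambda(\{|z|>1\})\,\pi(\{f_1>1\})<\infty$); on $\{s\le0\}$ bound $h_t\le t\,g(x,-s)$ and invoke the second bound in \eqref{eq:fubini-cond-3}. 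This estimate shows that $h_t$ is $\Lambda_{>}$-integrable with $\int_{V\times\R}h_t\,\dd\Lambda_{>}=\sum_{j:\,|z_j|>1}z_j\,h_t(x_j,s_j)$ an almost surely absolutely convergent sum over the atoms $(x_j,s_j,z_j)$ of $\mu$; moreover $\sum_j|z_j|\,h_t(x_j,s_j)<\infty$ almost surely forces $\sum_jz_jf(x_j,u-s_j)$ to converge absolutely for Lebesgue-a.e.\ $u\in[0,t]$ and to equal $X_{>}(u)$ there, so Fubini for sums gives $\int_0^tX_{>}(u)\,\dd u=\sum_jz_j\,h_t(x_j,s_j)=\int_{V\times\R}h_t\,\dd\Lambda_{>}$. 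Adding this to the $\Lambda_{\le}$-identity proves the lemma.

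I expect the main obstacle to be exactly the large-jump term: there is no $L^p$-control available, one must argue pathwise, and since the $s$-support of $h_t(x,\cdot)$ is unbounded the crude bound $h_t\le f_1$ fails to be integrable in $s$ — this is what forces the dominating function $g$ of \eqref{eq:fubini-cond-3} into the picture. Verifying the displayed key estimate from \eqref{eq:fubini-cond-1}--\eqref{eq:fubini-cond-3} is where the genuine work lies; the remaining steps are a citation (stochastic Fubini) and a routine comparison of Rajput--Rosinski functionals.
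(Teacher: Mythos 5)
Your proposal is correct and follows essentially the same route as the paper: the stochastic Fubini theorem of \cite{barndorff2011qou} in the integrable case, the split of $\lambda$ at $|z|=1$ with \eqref{eq:fubini-cond-1} reducing the small-jump part to the first case, and a pathwise Fubini for the large-jump Poisson sum justified by exactly the same key estimate (handling $s\le 0$ via \eqref{eq:fubini-cond-3} and $0<s\le t$ via \eqref{eq:fubini-cond-2} together with $f_1\in L^1(\pi)$). The only cosmetic differences are that the paper verifies the Musielak--Orlicz hypothesis of the stochastic Fubini theorem explicitly (your ``hypotheses reduce to joint integrability'' names the right ingredients but glosses the verification that $\int(z^2f^2\wedge|z|f)\,\lambda(\dd z)$ is integrable), and it cites \cite[Proposition 3.3.9]{samorodnitsky2016} for the decomposition $X=X_{\le}+X_{>}$ rather than comparing the functionals $U$ and $V_0$ directly.
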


The proof is given in Section \ref{sec-proofs}, where we also discuss the assumptions and give various sufficient conditions.

\subsection{Assumptions}

To prove the main results, we will have to make certain assumptions on the MMA process \eqref{eq:mma:def}. We list these assumptions here for reference. Note that we do not need both in all the cases.

For easier reference, we collect some properties of the generalized inverse
function. Let $h \colon [0,\infty) \to [0,\infty)$ be a continuous non-increasing 
function such that $\lim_{x \to \infty} h(x) = 0$. Its generalized inverse
$h^\leftarrow(w) = \inf \{ u \colon h(u) \leq w \}$, $w \leq h(0)$ is a non-increasing right-continuous function such that 
$h(u) > w \Leftrightarrow u < h^\leftarrow (w)$, 
and $h(h^\leftarrow(w)) = w$.

The first assumption deals with the function $f_2$ from \eqref{eq:f1f2}. For each fixed $x \in V$, the map $u \mapsto f_2(x, u)$ is continuous, non-increasing and vanishes as $u \to \infty$. Therefore, the generalized inverse
\begin{equation*}
	f_2^\leftarrow(x, w) = \inf \{u \geq 0 \colon f_2(x, u) \leq w\}
\end{equation*}	
is well-defined for $w \leq f_1(x)$, and we impose the following condition on it.
\begin{assumption} \label{assum-1}
	There exist $N \in \N$, $\varepsilon \in (0, 1)$ and $K > 0$ such that
	\begin{equation*} 
		f_2^{\leftarrow} (x, u f_1(x)) \leq K u^{\varepsilon - 1} f_1(x), \quad 
		0 < u \leq 1/N, \, x \in V.
	\end{equation*}
\end{assumption}
Assumption \ref{assum-1} with $\varepsilon = 0$ gives a bound for 
$f_2(x, \cdot)$. 
For  $u \in (0, 1/N$] and $x \in V$, we have   
\begin{equation*}
	f_2^\leftarrow(x, u f_1(x)) \leq K u^{-1} f_1(x).
\end{equation*}
Applying $f_2(x, \cdot)$ to both sides gives
\begin{equation*}
	f_2(x, Ku^{-1} f_1(x)) \leq u f_1(x). 
\end{equation*}
By setting $s = Ku^{-1} f_1(x)$, we obtain
\begin{equation} \label{eq:f2-inv-derived}
	f_2(x, s) \leq K s^{-1} f_1(x)^2, \quad s \geq K N f_1(x).  
\end{equation}

The second assumption is related to the moments of $X^*$. In particular, it will be used together with \cite[Theorem 1]{marinelli2014} to obtain appropriate bounds for the moments of components of $X^*$ given in \eqref{eq:main-decomp}.
\begin{assumption} \label{assum-2}
	For each $p \in [1,2]$, there exists $c_p > 0$, such that for all $t > 0$ 
	\begin{equation} \label{eq:f-cond-int1}
		\int_0^\infty (f_2(x, u) - f_2(x, t+u))^p \dd u 
		\leq c_p ( t \wedge f_1(x) )^p \, f_1(x)
	\end{equation}		
	and		 
	\begin{equation} \label{eq:f-cond-int2}
		\int_0^t (f_1(x) - f_2(x, u) )^p \dd u \leq c_p
		( t \wedge f_1(x) )^p \, t.
	\end{equation}
\end{assumption}

\section{Main results} \label{sec-main}

The main results are split into three cases. As already noted, the main condition is the following
\begin{equation} \label{eq:main-tm} \tag{$C_\gamma$}
	\int_V \int_{\R} |z|^{\gamma} f_1(x)^{\gamma} \bone (|z|f_1(x) > 1) 
	  \lambda (\dd z) \pi (\dd x) < \infty.
\end{equation}
We discuss the implications of this condition in Subsection \ref{subsec-growth}.

\subsection{Integrable small jumps}

First, we assume that there is no Gaussian component (i.e.~$b = 0$) and that $\int_{|z| \leq 1} |z| \lambda (\dd z) < \infty$. Without loss of generality, we set $a = \int_{|z|\leq 1} |z| \lambda (\dd z)$, so that the compensating term cancels out. By \eqref{eq:levy-ito}, the L\'evy basis $\Lambda$ can be expressed as
\begin{equation} \label{eq:Lambda-form1}
	\Lambda(\dd x, \dd s) = \int_{\R} z \, \mu(\dd x, \dd s, \dd z).
\end{equation}

\begin{theorem}\label{tm:finite-var}
	Let $X^*$ be the integrated MMA process defined in Subsection \ref{subsec:mma} such that $a = \int_{|z|\leq 1} |z| \lambda (\dd z) < \infty$ and $b = 0$. Let $\gamma \in (0,2]$ and if $\gamma>1$ suppose that Assumption \ref{assum-1} holds.
    
	If $\gamma < 2$ and \eqref{eq:main-tm} holds, then 
	\begin{equation*}
		\lim_{t \to \infty} \frac{X^*(t) - \bone (\gamma \geq 1) \E X^*(t)}{t^{1/\gamma}} = 0 \quad \text{a.s.}	
	\end{equation*}
	If \eqref{eq:main-tm} holds for $\gamma = 2$, then 
	\begin{equation*}
		\limsup_{t \to \infty} 
		\frac{|X^*(t) - \E X^*(t)|}
		{\sqrt{2 t \log \log t}}
		= \sqrt{\Var(X^*(1))} \quad \text{a.s.}
	\end{equation*}
\end{theorem}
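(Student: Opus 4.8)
The plan is to run a continuous-time Marcinkiewicz--Zygmund argument: reduce to a geometric subsequence, then perform a jump-size truncation separating ``large jumps'', handled by condition \eqref{eq:main-tm} and the Borel--Cantelli lemma, from ``small jumps'', handled by moment inequalities for stochastic integrals. First I would use Lemma~\ref{lemma:fubini} and the decomposition \eqref{eq:main-decomp} to write $X^*(t) = A(t) + B(t)$, where $A(t)$ is the integral over $\{s \le 0\}$ with kernel $f_2(x,-s) - f_2(x,t-s)$ and $B(t)$ the integral over $\{0 < s \le t\}$ with kernel $f_1(x) - f_2(x,t-s)$, each written against the Poisson random measure $\mu$ via \eqref{eq:Lambda-form1}. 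Since $\E X^*(t) = t\,\E X(1)$ is exactly linear in $t$, the centering present when $\gamma \ge 1$ distributes over $A$ and $B$, so it is enough to prove the statement for $A$ and for $B$ separately.

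Next, fixing $\rho > 1$ and $t_n = \rho^n$, I would first control the oscillation between grid points: since $X^*$ has stationary increments, $\sup_{t_n \le t \le t_{n+1}} |X^*(t) - X^*(t_n)|$ has the law of $\sup_{0 \le h \le (\rho-1)t_n} |X^*(h)|$, which can be bounded by a maximal/moment inequality for stochastic integrals (Doob's inequality for the compensated small-jump martingale, plus a direct estimate of the drift and large-jump parts, using that the kernel $K_h(x,s) = \int_0^h f(x,u-s)\,\dd u$ is nonnegative and nondecreasing in $h$); choosing $\rho$ near $1$ --- a blocking step in the law of the iterated logarithm case --- renders this oscillation negligible on the scale $t^{1/\gamma}$ (resp.\ $\sqrt{2t\log\log t}$), so everything reduces to the behaviour of $A(t_n)$ and $B(t_n)$.

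The core is then a level-dependent truncation of $\mu$: at level $n$, call a point $(x,s,z)$ \emph{large} when $|z|\,f_1(x) > \delta\, t_n^{1/\gamma}$ (resp.\ $> \delta\sqrt{t_n/\log\log t_n}$) and \emph{small} otherwise, which makes sense because $K_t(x,s) \le f_1(x)$. For the large part, one estimates the expected number of relevant large points over $[0,t_n]$, and the associated first/second moments, directly through \eqref{eq:main-tm}; summability over $n$ plus Borel--Cantelli then shows the large part is almost surely eventually negligible --- for $\gamma < 1$ using the subadditive bound $|\sum a_i|^\gamma \le \sum |a_i|^\gamma$ and no centering, and for $1 \le \gamma \le 2$ after centering and an $L^2$-estimate of the centered large part, again reduced to \eqref{eq:main-tm}. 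For the small part, I would bound a suitable $p$-th moment ($p \in [1,2]$) by the moment inequality of \cite[Theorem~1]{marinelli2014}, whose inputs are precisely the space--time kernel integrals $\int_0^\infty (f_2(x,u) - f_2(x,t+u))^p\,\dd u$ and $\int_0^t (f_1(x) - f_2(x,u))^p\,\dd u$; Assumption~\ref{assum-2} bounds these by $(t \wedge f_1(x))^p f_1(x)$ and $(t \wedge f_1(x))^p t$, and Assumption~\ref{assum-1} --- through the decay estimate \eqref{eq:f2-inv-derived} --- controls the kernel tail so that the integral over the unbounded past $\{s \le 0\}$ appearing in $A$ converges. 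Markov's inequality and Borel--Cantelli along $(t_n)$ then complete the case $\gamma < 2$.

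For $\gamma = 2$ the truncation argument gives the upper bound $\limsup \le \sqrt{\Var(X^*(1))}$, and for the matching lower bound I would run the classical second-moment law of the iterated logarithm argument along a sparse subsequence $t_k = \theta^{k^2}$: the increments $X^*(t_k) - X^*(t_{k-1})$ coming from $\Lambda$ restricted to disjoint time strips are independent, the far-past remainder is negligible by the kernel decay from Assumption~\ref{assum-1}, a within-strip central limit theorem (finite variance from $(C_2)$, together with a variance computation that identifies the limiting constant) makes each normalized increment asymptotically Gaussian, and anti-concentration with the second Borel--Cantelli lemma yields infinitely many exceedances of $(1-\eta)\sqrt{2 t_k \log\log t_k}$. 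The main obstacle I anticipate is the large-jump bookkeeping for the infinite-past component $A(t)$: because $s$ ranges over $(-\infty,0]$, crude tail bounds do not sum over $n$, so one genuinely needs the quantitative decay of $f_2(x,\cdot)$ from Assumption~\ref{assum-1} (assumed exactly in the regime $\gamma > 1$ where it is needed) together with a careful choice of truncation level to make the Borel--Cantelli series converge; the LIL lower bound, with its independent-strip decoupling and anti-concentration, is the second delicate ingredient.
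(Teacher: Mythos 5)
Your overall architecture (decomposition \eqref{eq:main-decomp}, truncation of the Poisson points, Borel--Cantelli along a subsequence) is in the right family, but it diverges from the paper's proof at the key structural point and has concrete gaps. The paper does not treat $B(t)=\int_{V\times(0,t]}z(f_1(x)-f_2(x,t-s))\,\mu$ as a whole: it splits it further into the subordinator $X^*_{+,1}(t)=\int z f_1(x)\,\mu$ and the correction $X^*_{+,2}(t)=\int z f_2(x,t-s)\,\mu$. Since $X^*_{+,1}$ is a genuine L\'evy process whose L\'evy measure has finite $\gamma$-moment exactly under \eqref{eq:main-tm}, the classical Marcinkiewicz--Zygmund SLLN and, for $\gamma=2$, Sato's LIL for L\'evy processes deliver the main term \emph{including the LIL constant and its lower bound} for free; all remaining work (Lemmas \ref{lemma:X-}, \ref{lemma:X+2-bound}, \ref{lemma:X+2}) is negligibility of $X^*_-$ and $X^*_{+,2}$, the latter via a multi-scale decomposition of the Poisson points by the size of $z f_2(x,t-s)$ combined with Poisson tail bounds, which is where Assumption \ref{assum-1} is really used. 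By not isolating the L\'evy subprocess you commit yourself to reproving the MZ SLLN and, much worse, the full LIL (upper \emph{and} lower bound, with identification of the constant) by hand. Your sketch of the lower bound (independent strips, within-strip CLT, anti-concentration, second Borel--Cantelli) is only asserted; carrying it out requires identifying the asymptotic variance per unit time and controlling the dependence through the shared past, and nothing in the proposal does this. The $\gamma=1$ case, which the paper handles separately through the mixing property of MMA processes, is also not addressed.

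Two further points are genuine errors rather than omissions. First, you invoke Assumption \ref{assum-2} and the martingale moment inequality of \cite{marinelli2014} to control the ``small part''; but Theorem \ref{tm:finite-var} does not assume Assumption \ref{assum-2} (only Assumption \ref{assum-1}, and only for $\gamma>1$), and in the integrable-small-jumps regime the integrals are uncompensated, so \cite{marinelli2014} does not apply without artificially compensating and separately estimating the compensator. Your proof, as written, establishes a strictly weaker theorem. The paper avoids this entirely: for $X^*_-$ it uses only a \emph{first}-moment bound $\E X^*_-(t)\le c\,t^{2-\gamma}\log t$ derived from \eqref{eq:f2-inv-derived} (Assumption \ref{assum-1} with $\varepsilon=0$), which suffices because after splitting into positive and negative jumps everything is nonnegative and Markov's inequality applies. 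Second, your summability claim for the truncated part is too quick: with truncation at level $\delta t_n^{1/\gamma}$, the blockwise variance of the small part is of order $t_n^{2/\gamma}$, so Chebyshev gives probabilities that are $O(1)$ in $n$, not summable; and the expected number of large points in $[0,t_n]$ is likewise $O(1)$. One needs the incremental bookkeeping (summing over blocks $(t_{n-1},t_n]$ with level $t_n^{1/\gamma}$ and exchanging the sum with the integral against $\lambda\times\pi$) to recover summability from \eqref{eq:main-tm} --- this is exactly the delicate counting the paper performs in Lemmas \ref{lemma:X+2-bound} and \ref{lemma:X+2}, and it is precisely where Assumption \ref{assum-1} with $\varepsilon>0$ enters; your proposal does not engage with it.
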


\subsection{Non-integrable small jumps}
Next, we consider the case when $\int_{|z| \leq 1} |z| \lambda (\dd z) = \infty$ and $a = b = 0$. The compensating term is required in \eqref{eq:levy-ito}, giving  
\begin{equation*}
	\Lambda(\dd x, \dd s) = \int_{|z| \leq 1} z (\mu - \nu ) (\dd x, \dd s, \dd z) + 
	 \int_{|z| > 1} z \mu(\dd x, \dd s, \dd z).
\end{equation*}
To describe the behavior of $\pi $ at infinity, we define
\begin{equation*}
	\alpha_0 = \sup \left\{
	\alpha' \geq 0 \colon \int_V 
	f_1(x)^{1 + \alpha'} \bone(f_1(x) > 1) \pi (\dd x) < \infty
	\right\} \leq \infty.
\end{equation*}
Note that the set above is non-empty since $f \in L^1(\pi  \times \Leb)$, and therefore $f_1 \in L^1(\pi )$. If $\int_V f_1(x)^{1 + \alpha_0} \bone(f_1(x) > 1) \pi (\dd x) < \infty$, we set $\alpha = \alpha_0$, and otherwise we take $\alpha < \alpha_0$ arbitrarily close. Moreover, by the definition of $\alpha$ and the integrability of $f_1$, it is clear that $f_1 \in L^{1+\alpha}(\pi )$. Next, we define the parameters
\begin{equation*}
	\beta_0 = \inf \left\{
	\beta' \geq 0 \colon \int_{|z| \leq 1} |z|^{\beta'} \lambda (\dd z) < \infty
	\right\} \in [0,2],
\end{equation*}
and
\begin{equation*}
	\eta_{\infty} = \sup \left\{
	\eta' \geq 0 \colon \int_{|z| > 1} |z|^{\eta'} \lambda (\dd z) < \infty
	\right\} \leq \infty,
\end{equation*}
which describe the behavior of the L\'{e}vy measure $\lambda $ at zero and at infinity, respectively.
The parameter $\beta_0$ is the Blumenthal--Getoor index \cite{blumenthal-getoor} of the L\'evy measure $\lambda $. If $\int_{|z| \leq 1} |z| \lambda (\dd z) = \infty$, then $\beta_0 \geq 1$, while the fact that $\lambda $ is a L\'evy measure ensures that $\beta_0 \leq 2$. The parameter $\eta_{\infty}$ is referred to as the tail index of the L\'{e}vy measure $\lambda $. In what follows, if $\int_{|z| \leq 1} |z|^{\beta_0} \lambda (\dd z) < \infty$, we set $\beta = \beta_0$. Otherwise, we choose $\beta > \beta_0$ arbitrarily close. Similarly, we take $\eta \leq \eta_{\infty}$, with equality when the corresponding integral is finite.

\begin{theorem} \label{tm:infinite-variation}
    Let $X^*$ be the integrated MMA process defined in Subsection \ref{subsec:mma} such that $\int_{|z| \leq 1} |z| \lambda (\dd z) = \infty$ and $a = b = 0$. Let $\gamma \in (0,2]$ and if $\gamma>1$ suppose that Assumption \ref{assum-1} holds. Suppose also that Assumption \ref{assum-2} is satisfied.
	\begin{enumerate}[label = (\roman*)]
		\item If $\beta \leq 1 + \alpha$, $\gamma \leq 2$ and \eqref{eq:main-tm} holds, then
		\begin{equation*}
			\limsup_{t \to \infty} \frac{|X^*(t) - \bone(\gamma \geq 1) \E X^*(t)|}
			{t^{1/\gamma} \log t} \leq 1 \quad \text{a.s.}
		\end{equation*}		
		\item If $\beta > 1 + \alpha$, $\gamma < (1-\alpha/\beta)^{-1}$ and
		\begin{equation*}
			\int_V \int_{|z| > 1}  
			|z|^{\gamma} f_1(x)^{\gamma} \bone(|z| f_1(x) > 1) 
			\lambda (\dd z)  \pi (\dd x) < \infty, 
		\end{equation*}
		then
		\begin{equation*}
			\lim_{t \to \infty} \frac{X^*(t) - \bone(\gamma \geq 1) \E X^*(t)}
			{t^{1/\gamma}} = 0 \quad \text{a.s.}
		\end{equation*}		
	\end{enumerate}	
\end{theorem}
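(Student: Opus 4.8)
The plan is to run the classical two–scale scheme for almost sure limits: first establish the claimed bound along a geometric subsequence $t_n=\theta^n$ (with $\theta>1$ fixed), then control the oscillation of $X^*$ over each block $[t_n,t_{n+1}]$, and finally let $\theta\downarrow1$ to remove the dependence on $\theta$ and, in part~(i), to recover the constant $1$. Throughout I use Lemma~\ref{lemma:fubini} together with the decomposition \eqref{eq:main-decomp} to write $X^*(t)=I_1(t)+I_2(t)$, the ``past'' and ``present'' stochastic integrals, and treat the two addends separately; both have the form $\int_{V\times\R}h_t(x,s)\,\Lambda(\dd x,\dd s)$ with $0\le h_t(x,s)\le f_1(x)$, the present term having integrand supported on a set of $(\pi\times\Leb)$–mass $O(t)$, and the past term being controlled, for $\gamma>1$, through Assumption~\ref{assum-1} and the decay bound \eqref{eq:f2-inv-derived}, and in all regimes through the $L^p$–estimates of Assumption~\ref{assum-2}.

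Since $a=b=0$, I split the driving Poisson measure at a level tuned to the target rate: write each $\Lambda$–integral as $\int h_t\,\Lambda=L^{(t)}+S^{(t)}$, where $L^{(t)}$ collects the jumps with $|z|f_1(x)>\varepsilon\,t^{1/\gamma}$ and $S^{(t)}$ the suitably compensated remainder. For the big–jump part I use a direct estimate on the Poisson intensity: the number of contributing jumps with $|z|f_1(x)>a$ over a time window of length $O(t)$ has expectation $O\!\big(t\int_V\int_\R\bone(|z|f_1(x)>a)\,\lambda(\dd z)\pi(\dd x)\big)$, which by Chebyshev against \eqref{eq:main-tm} (respectively its large–jump version in part~(ii)) is $O(t\,a^{-\gamma})$; at level $a\asymp t^{1/\gamma}$ this is $O(1)$, while at level $a\asymp t_n^{1/\gamma}\log t_n$ along $t_n=\theta^n$ one gets a summable bound, so by Borel--Cantelli $L^{(t_n)}$ is negligible at the scale $t_n^{1/\gamma}\log t_n$ and the residual ``one big jump'' contribution yields exactly the constant $1$ in part~(i) and $o(1)$ in part~(ii). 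The role of the hypothesis $\gamma<(1-\alpha/\beta)^{-1}$ in part~(ii) is precisely that it forces the genuinely small jumps, those with $|z|f_1(x)\le1$, to contribute below the scale $t^{1/\gamma}$, which is why only the large–jump part of \eqref{eq:main-tm} is needed there.

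For the small–jump part $S^{(t)}$ I invoke the moment inequality of \cite[Theorem~1]{marinelli2014} for stochastic integrals against compensated Poisson random measures: for an exponent $p$ chosen in $[\,1\vee\gamma,\,2\,]$ (or just above $\gamma$), $\E|S^{(t)}|^p$ is dominated, up to constants, by $\int_V\int_{\{|z|f_1(x)\le\varepsilon t^{1/\gamma}\}}|h_t(x,s)\,z|^p\,\lambda(\dd z)\,\dd s\,\pi(\dd x)$. Here Assumption~\ref{assum-2} turns the $s$–integral of $h_t^p$ into $(t\wedge f_1(x))^p f_1(x)$ for the past term and $(t\wedge f_1(x))^p\,t$ for the present one, and for $\gamma>1$ it is Assumption~\ref{assum-1}, through \eqref{eq:f2-inv-derived}, that makes the remaining tail integrals in $s$ converge. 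Carrying out the $z$–integral against $\lambda$ near $0$ (governed by $\beta$) and at infinity (governed by $\eta$), and the $x$–integral against $\pi$ (governed by $\alpha$), one reaches a bound $\E|S^{(t)}|^p\le C\,t^{p/\gamma}$, possibly up to a power of $\log t$. When $\beta\le1+\alpha$ the admissible $p$ is pinned down and the $\log$–factor is genuine, so Markov's inequality plus Borel--Cantelli along $t_n=\theta^n$ give the bound with the $\log t$ of part~(i); when $\beta>1+\alpha$ and $\gamma<(1-\alpha/\beta)^{-1}$ one can choose $p$ so that $\sum_n t_n^{-p/\gamma}\,\E|S^{(t_n)}|^p<\infty$, yielding the clean $o(t^{1/\gamma})$ of part~(ii). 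For $\gamma<1$ no centering is used, and the bias introduced by compensating the truncated jumps is of lower order and absorbed by the same estimates; for $\gamma\ge1$ one subtracts $\E X^*(t)=t\,\E X(1)$ and checks that the truncation shift of the mean is negligible.

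The oscillation step is handled by applying the same big/small splitting to the block increments $X^*(t)-X^*(t_n)=\int_{V\times\R}\big(\int_{t_n}^{t}f(x,u-s)\,\dd u\big)\Lambda(\dd x,\dd s)$, using that the integrand is nondecreasing in $t$ and bounded by $\int_{t_n}^{t_{n+1}}f(x,u-s)\,\dd u$: the big–jump contribution over the block is then dominated by its value at $t_{n+1}$, a nonnegative random variable handled by its expectation, and the compensated small–jump contribution by a Doob/Lévy–type maximal inequality over $t\in[t_n,t_{n+1}]$ after passing through the past/present decomposition and the Lévy--Itô representation. Since $t_{n+1}-t_n\asymp t_n$, the resulting bounds are of the same order $t_n^{1/\gamma}\log t_n$ (part~(i)) or $o(t_n^{1/\gamma})$ (part~(ii)) as along the subsequence, with $\theta$ entering only through multiplicative constants that tend to $1$ as $\theta\downarrow1$. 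I expect the main obstacle to be the small–jump moment estimate: the exponent $p$ and the truncation level must be chosen simultaneously so that all three integrations, over $z$ near $0$ and at infinity, over $s$, and over $x$, converge with the sharp power of $t$, and it is exactly the interaction of Assumptions~\ref{assum-1}--\ref{assum-2} with the indices $\alpha,\beta,\eta$ that forces the dichotomy $\beta\le1+\alpha$ versus $\beta>1+\alpha$ and produces the logarithmic factor; a secondary difficulty is setting up the block maximal inequality for the infinite–variation part, where $t\mapsto X^*(t)$ is not itself a martingale.
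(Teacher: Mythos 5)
Your high-level plan (split off a truncated part, bound the compensated remainder by moments, pass from a subsequence to all $t$) is the standard template, but at the two places you yourself flag as the main obstacles the argument as sketched does not close, and the paper's proof is structured quite differently precisely to get around them.

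First, the small-jump moment estimate. A single exponent $p\in[1\vee\gamma,2]$ cannot simultaneously handle $\lambda$ near the origin and the contribution of large $f_1$. Concretely, for the compensated part with $p=2$ one gets, via Assumption \ref{assum-2}, $\E|S^{(t)}|^2\le c\,t\int_V(t\wedge f_1)^2\,\dd\pi\le c\,t^{3-\alpha}$ for $\alpha<1$, and $3-\alpha>2/\gamma$ whenever $\gamma>2/(3-\alpha)$, so the bound $\E|S^{(t)}|^p\le Ct^{p/\gamma}$ fails at $p=2$ in the regime that matters; while the natural choice $p=\gamma$ requires $\int_{|z|\le1}|z|^{\gamma}\lambda(\dd z)<\infty$, i.e.\ $\gamma\ge\beta$, which is not implied by the hypotheses of part (i) (take $\gamma<\beta\le 1+\alpha$). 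The paper escapes this by truncating the \emph{contribution} $z(f_1(x)-f_2(x,t-s))$ at a level $t^{\kappa}$ (sets $A(t;\kappa)$, $B(t;\kappa)$), so that on the retained set one may interpolate $(zh)^2\le t^{\kappa(2-\beta)}(zh)^{\beta}$, and then uses an \emph{exponential} Chernoff bound (Lemma \ref{lemma:log-theta}, Corollary \ref{cor:kappa-bound}) rather than polynomial moments. This is also where the $\log t$ and the constant $1$ in part (i) come from (four terms each contributing $1/4$) — not, as you assert, from a residual "one big jump". Relatedly, in part (ii) the correct truncation level is $t^{\kappa}$ with $\kappa$ strictly between $1-\alpha/\beta$ and $1/\gamma$, not $t^{1/\gamma}$: that intermediate choice is what makes the exceedance set empty for large $t$ (Lemma \ref{lemma:contradiction}) while keeping the compensated part $o(t^{1/\gamma})$.

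Second, the passage to all $t$ and the big jumps. With geometric blocks $[t_n,t_{n+1}]$, $t_n=\theta^n$, the block length is $\asymp t_n$, so the oscillation you must control is of the same order as $X^*(t_n)$ itself; since $t\mapsto X^*(t)$ is not a martingale there is no Doob-type maximal inequality to invoke, and chaining over $\asymp t_n$ unit subintervals forces each tail probability to be $\ll\theta^{-n}$, which again requires exponential rather than polynomial bounds. The paper instead works along integers with unit blocks, using a Kolmogorov-continuity estimate (Lemma \ref{lemma:delta-t}, via \cite[Theorem 4.3]{khoshnevisan2009}) plus Lemma \ref{lemma:supY}; at unit spacing the polynomial route $\sum_n(\log n)^{-\gamma}$ diverges, so the exponential bound is not optional. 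Finally, for the large jumps your intensity count only shows that $O(1)$ points exceed level $t^{1/\gamma}$; it does not bound their total contribution, since nothing in the count prevents a single jump of size $t^{2/\gamma}$. The paper avoids re-proving this: it splits at the fixed level $|z|=1$ and applies Theorem \ref{tm:finite-var} verbatim to the restriction $\mu\bone(|z|>1)$ (whose small jumps are trivially integrable), and inside Theorem \ref{tm:finite-var} the large-jump control rests on the i.i.d.\ block maxima $\max_{i\le n}Y(i)=o(n^{1/\gamma})$ together with a multi-layer dyadic decomposition of the intermediate contributions — ingredients absent from your sketch.
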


Note that the integral in condition (ii) is taken for $|z| > 1$, in contrast 
to the integral in \eqref{eq:main-tm}.

\subsection{Gaussian case}

For completeness, we consider the purely Gaussian case, that is $a = 0$, $b > 0$ and $\lambda  \equiv 0$. In this case, the law of iterated logarithm holds, which follows from a general result on Gaussian processes \cite[Theorem 1.1]{orey1972}. 
\begin{theorem} \label{tm-gaussian}
	Let $X^*$ be the integrated MMA process defined in Subsection \ref{subsec:mma} such that $a = 0$, $b > 0$ and $\lambda  \equiv 0$. If $\int_V f_1^2 \dd \pi < \infty$, then 
	\begin{equation*}
		\limsup_{t \to \infty} 
		\frac{|X^*(t)|}{\sqrt{2 b \int_V f_1^2  \dd \pi \,   t \log \log t }} = 1 \quad \text{a.s.}
	\end{equation*}
\end{theorem}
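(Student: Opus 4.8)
The plan is to identify $X^*$ as a centered Gaussian process with stationary increments, to pin down its variance function asymptotically, and then to invoke \cite[Theorem~1.1]{orey1972}.

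Since $X(1)$ is Gaussian with finite variance, $\E|X(1)|<\infty$, so Lemma~\ref{lemma:fubini} gives $X^*(t)=\int_{V\times\R}K_t(x,s)\,\Lambda(\dd x,\dd s)$ with $K_t(x,s)=\int_0^t f(x,u-s)\,\dd u$; using the decomposition \eqref{eq:main-decomp} together with the hypothesis $\int_V f_1^2\,\dd\pi<\infty$ one checks that $K_t\in L^2(\pi\times\Leb)$ for every $t$. Because $b>0$ and $\lambda\equiv 0$, $\Lambda=\Lambda^G$ is a Gaussian random measure, hence $\{X^*(t),\,t\geq 0\}$ is centered Gaussian; stationarity of $X$ makes $X^*$ have stationary increments with $X^*(0)=0$, and since $X$ is $L^2$-continuous it admits a measurable modification for which $t\mapsto X^*(t)=\int_0^t X(u)\,\dd u$ has continuous paths. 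Its variance function is $\sigma^2(t):=\Var(X^*(t))=2\int_0^t (t-h)\gamma(h)\,\dd h$, where $\gamma(h):=\operatorname{Cov}(X(0),X(h))=b\int_V\int_0^\infty f(x,v)f(x,v+h)\,\dd v\,\pi(\dd x)\geq 0$.

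The key step is the identity $\int_0^\infty\gamma(h)\,\dd h=\tfrac12\,b\int_V f_1^2\,\dd\pi$. By Tonelli and the substitution $u=v+h$, $\int_0^\infty\!\int_0^\infty f(x,v)f(x,v+h)\,\dd v\,\dd h=\int_0^\infty f(x,v)f_2(x,v)\,\dd v$; since $f_2(x,\cdot)$ is absolutely continuous with $\partial_v f_2(x,v)=-f(x,v)$, $f_2(x,0)=f_1(x)$ and $f_2(x,\infty)=0$, this equals $-\tfrac12\int_0^\infty\partial_v\big(f_2(x,v)^2\big)\,\dd v=\tfrac12 f_1(x)^2$, and integrating in $x$ yields the claim; in particular $\int_0^\infty\gamma<\infty$. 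Writing $\sigma^2(t)=2t\int_0^t\gamma(h)\,\dd h-2\int_0^t h\gamma(h)\,\dd h$, a routine splitting of $\int_0^t h\gamma(h)\,\dd h$ at $\varepsilon t$ (using $\gamma\geq 0$ and $\int_0^\infty\gamma<\infty$) shows $t^{-1}\int_0^t h\gamma(h)\,\dd h\to 0$, whence $\sigma^2(t)\sim b\big(\int_V f_1^2\,\dd\pi\big)\,t$ as $t\to\infty$. Moreover $\sigma^2$ is continuous, and since $\gamma\geq 0$ forces $\operatorname{Cov}(X^*(s),X^*(t)-X^*(s))\geq 0$ for $s<t$, it is non-decreasing; thus $\sigma^2$ is regularly varying at infinity with index $1$.

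Finally, $X^*$ satisfies the hypotheses of \cite[Theorem~1.1]{orey1972}, which gives $\limsup_{t\to\infty}|X^*(t)|/\sqrt{2\sigma^2(t)\log\log t}=1$ a.s.; replacing $\sigma^2(t)$ by its equivalent $b(\int_V f_1^2\,\dd\pi)\,t$ multiplies the normalization by a factor tending to $1$, which delivers the stated limit. I expect the only genuinely delicate point to be verifying Orey's regularity assumptions in this generality — the existence of a suitable separable/continuous modification and the precise form of his monotonicity and regular-variation conditions, including the behaviour of $\sigma^2$ near $0$ relevant to sample-path regularity; the variance computation and the stationary-increments structure are routine.
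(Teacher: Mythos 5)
Your proof is correct and follows the same overall strategy as the paper: identify $X^*$ as a centered Gaussian process, show $\Var(X^*(t))\sim b\,t\int_V f_1^2\,\dd\pi$ as $t\to\infty$, and invoke Orey's law of the iterated logarithm. The only substantive difference lies in how the variance asymptotics are obtained. The paper works directly with the kernel via the decomposition \eqref{eq:main-decomp}, writing $\Var(X^*(t))=b\int_V(I_1+I_2)\,\pi(\dd x)$ with $I_1=\int_0^\infty\bigl(f_2(x,u)-f_2(x,t+u)\bigr)^2\dd u$ and $I_2=\int_0^t\bigl(f_1(x)-f_2(x,u)\bigr)^2\dd u$, and shows $t^{-1}I_1\to0$ by dominated convergence while $I_2\sim t f_1(x)^2$. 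You instead pass through the autocovariance $\gamma$ of the stationary process $X$, prove the identity $\int_0^\infty f(x,v)f_2(x,v)\,\dd v=\tfrac12 f_1(x)^2$ (hence $\int_0^\infty\gamma=\tfrac12 b\int_V f_1^2\,\dd\pi$), and finish with a Cesàro-type argument on $\sigma^2(t)=2\int_0^t(t-h)\gamma(h)\,\dd h$; all of these steps check out, including the use of $\gamma\geq 0$ (valid since $f\geq 0$) to get monotonicity of $\sigma^2$. Your route isolates a cleaner closed-form identity, at the cost of a slightly longer setup; the paper's route stays closer to the machinery already developed for the other theorems. You also correctly flag that Orey's theorem additionally requires a separable modification and regularity of $\sigma^2$ near the origin — a point the paper glosses over as well; it is harmless here since $\sigma^2(h)\sim\gamma(0)h^2$ as $h\downarrow 0$, so no gap results.
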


\subsection{Rate of growth} \label{subsec-growth}

The condition \eqref{eq:main-tm} is related to the behavior of $\pi $ and $\lambda $ at zero and infinity. If $\gamma \in (0, 2]$ and \eqref{eq:main-tm} holds, then
\begin{align*}
	\int_V & f_1(x)^\gamma \bone(f_1(x) > 1) \pi (\dd x) 
	\int_{|z| > 1} |z|^\gamma \lambda (\dd z) \\ 
	& \leq \int_V f_1(x)^\gamma 
	\int_{(1/f_1(x), \infty)} |z|^\gamma \lambda (\dd z) \pi (\dd x) \\ 
	& = \int_V \int_{\R} |z|^\gamma f_1(x)^\gamma \bone(|z| f_1(x) > 1) 
	\lambda (\dd z) \pi (\dd x) < \infty.	 
\end{align*}
Therefore, $\gamma \leq 1 + \alpha$ and $\gamma \leq \eta$. Moreover, if \eqref{eq:main-tm} holds with any $\gamma > 0$, then it also holds with $\gamma = 0$, that is, 
\begin{equation} \label{eq:main-gamma0}
	\int_{\R} \pi  \left(f_1^{-1} (|z|^{-1}, \infty) \right) \lambda (\dd z) = 
	\int_V \lambda \left(\{z \colon |z|f_1(x) > 1 \}\right) \pi (\dd x) < \infty.
\end{equation} 
If $\pi  \left(f_1^{-1} \left( (|z|^{-1}, \infty) \right) \right) = h(z) z^{1+\alpha_0}$ around $0$ or $\lambda \left(\{z \colon |z|f_1(x) > 1 \}\right) = h(x) x^{-\beta_0}$, where $h$ satisfies $\lim_{x \downarrow 0} h(x) x^\varepsilon = 0$ and $\lim_{x \downarrow 0} h(x)x^{-\varepsilon} = \infty$ for any $\varepsilon > 0$, then \eqref{eq:main-gamma0} implies $\beta \leq 1 + \alpha$.
However, it is possible to construct measures $\pi$ and $\lambda$ 
such that \eqref{eq:main-tm} holds with some $\gamma > 0$ and $\beta > 1 + \alpha$;
see the remark after Theorem 2 in \cite{grahovac2025}.

In general, the possible choice of $\gamma$ depends on $\alpha$, $\beta$, and $\eta$. 
Note that we can decompose the integral in \eqref{eq:main-tm} as 
\begin{equation} \label{eq:main-rates}
	\begin{split}
		& \int_V  \int_{\R} |z|^\gamma f_1(x)^\gamma \bone(|z| f_1(x) > 1) \lambda (\dd z) \pi (\dd x)  \\ 
		&= \int_V f_1(x)^\gamma \bone(f_1(x) > 1) \pi (\dd x) 
        \left(
		\int_{|z| \in (1/f_1(x), 1]} |z|^\gamma \lambda (\dd z) + \int_{|z| > 1} |z|^\gamma \lambda (\dd z) 
		\right) \\ 
		& \quad + \int_V f_1(x)^\gamma \bone(f_1(x) \leq 1) \pi (\dd x) \int_{|z| > 1/f_1(x)} |z|^\gamma \lambda (\dd z).
	\end{split}	
\end{equation}
Assume that $\pi$ is a finite measure. By analyzing each of the terms in \eqref{eq:main-rates} separately, we can deduce 
that the smallest possible value of $1/\gamma$ that can be taken in Theorems \ref{tm:finite-var} and \ref{tm:infinite-variation}
are as follows:
\begin{itemize}
	\item if $\alpha \geq 1$ and $\eta \geq 2$, then
	 $1/\gamma = {1}/{2}$; 
	\item if $\alpha \geq 1$, $\eta < 2$ or
	$\alpha < 1$, $\eta \leq 1 + \alpha$, $\beta \leq 1 + \alpha$, then 
	$1/\gamma = {1}/{\eta}$;
	\item if $\alpha < 1$, $\eta > 1 + \alpha$, $\beta \leq 1 + \alpha$, then 
	$1/\gamma = {1}/(1+\alpha)$; 
	\item if $\alpha < 1$, $\eta > 1 + \alpha$, $\beta > 1 + \alpha$, then 
	$1/\gamma > 1-\alpha/\beta$;
    \item if $\alpha < 1$, $\eta \leq 1 + \alpha < \beta$, $\eta < ( 1 - \alpha/\beta)^{-1}$,
    then $1/\gamma = 1/\eta$;
    \item if $\alpha < 1$, $\eta \leq 1 + \alpha < \beta$, $\eta \geq ( 1 - {\alpha}/{\beta})^{-1}$,
    then $1/\gamma > 1-\alpha/\beta$.
\end{itemize}
For the last three cases, recall that in Theorem \ref{tm:infinite-variation} (ii) the corresponding integral
is taken on $|z| > 1$, therefore the integrals on $|z| \in (0,1]$ in \eqref{eq:main-rates} are missing.
Furthermore, in Theorem \ref{tm:infinite-variation} (ii) $\gamma < 1 - \tfrac{\alpha}{\beta}$.
Similar statement holds true if $\pi((0,\infty)) = \infty$ with a few more cases, and an additional parameter
describing the tail of $\pi$.

This gives bounds on the rate of growth of the integrated process in terms of the behavior of $\pi$ and $\lambda$
at zero and infinity. From the limit theorems for supOU processes \cite{grahovac2019, grahovac2020} and trawl 
processes \cite{pakkanen2021,talarczyk2020}, one can conclude that the given bounds are sharp for these classes of processes. We note that in \cite{grahovac2020}, there is an error in the proof of Theorem 1 in the case that corresponds to ours $\alpha < 1$ and $\eta \leq 1 + \alpha < \beta$. The limiting process in \cite{grahovac2020} can be $\eta$-stable L\'evy process or a $\beta$-stable $1-\alpha/\beta$-self-similar process, depending on whether $\eta < ( 1 - \alpha/\beta)^{-1}$ or $\eta > ( 1 - {\alpha}/{\beta})^{-1}$.

\section{Examples} \label{sec-examples}

The stated results hold for several well-known classes of mixed moving average processes, including the supOU and the trawl processes, as well as the less-studied supfOU processes. We show that these processes satisfy the Assumptions \ref{assum-1} and \ref{assum-2}, and state the corresponding results. Additionally, we consider the case of L\'evy-driven moving averages.

\subsection{SupOU process}
Suppose that $V = (0, \infty)$  and 
\begin{equation*}
	f(x, v) = e^{-xv} \bone(v \geq 0).
\end{equation*} 
Then the process defined in \eqref{eq:mma:def} is the supOU process \cite{barndorff1998invGaussian, barndorff2000supOU}. It is well defined if and only if $m_{-1}(\pi) < \infty$ and 
$\int_{|z| > 1} \log |z| \lambda (\dd z) < \infty$.

This example was previously investigated in \cite{grahovac2025}, where the corresponding statements of Theorems \ref{tm:finite-var}, 
\ref{tm:infinite-variation} and \ref{tm-gaussian} were given. For $\gamma = 1$ we slightly improve the results, as no additional 
moment assumption is needed. Here, we simply verify that our assumptions hold.

First, we check the conditions of Fubini's theorem, i.e.~Lemma \ref{lemma:fubini}. By Lemma \ref{lemma:sufficient-fubini}, \eqref{eq:fubini-cond-1} and \eqref{eq:fubini-cond-3} hold. Condition \eqref{eq:fubini-cond-2} holds if $\pi$ is a finite measure. However, a weaker condition is sufficient, namely
\begin{equation}\label{eq:fubini-supOU-suff}
		\int_{(1,\infty)} \int_{|z| > 1}
		( 1 \wedge |z|x^{-1} ) 
		\lambda(\dd z) \pi (\dd x) < \infty.
\end{equation}
We assume that \eqref{eq:fubini-supOU-suff} holds if $\E |X(1)|=\infty$. We note that this assumption is missing in \cite{grahovac2025}.

The integrated process has decomposition \eqref{eq:main-decomp} with 
\begin{equation*}
	f_2(x, u) = x^{-1}e^{-xu} \quad \text{and} \quad f_1(x) = x^{-1}. 
\end{equation*}
Thus, $f_2^{\leftarrow}(x,w) = - x^{-1} \log (xw)$ for $w \leq x^{-1}$.
For $u \leq 1$, we have
\begin{equation*}
	f_2^\leftarrow(x, ux^{-1}) = \log\left(
	(ux^{-1} x)^{-1} \right) x^{-1}
	= \log\left(u^{-1}\right) x^{-1} \leq K u^{\varepsilon - 1} x^{-1},
\end{equation*} 
which verifies Assumption \ref{assum-1} with $N = 1$, $K > e^{-1}$, and $\varepsilon \in (0, 1-(K e)^{-1})$. Let $p \in [1,2]$. Note that $1 - e^{-xt} \leq xt \wedge 1$ for all $x, t > 0$. Assumption \ref{assum-2} holds, since 
\begin{align*}
	\int_0^\infty \left( 
	f_2(x, u) - f_2(x, t+u)
	\right)^p \dd u 
	&= 
	\int_0^\infty \left(x^{-1} e^{-xu} (1 - e^{-xt})\right)^p \dd u \\
	& \leq 
	p^{-1} (t \wedge x^{-1})^p x^{-1}
\end{align*}
and  
\begin{align*}
	\int_0^t \left( f_1(x) - f_2(x, u) \right)^p \dd u
	&= \int_0^t \left(x^{-1} (1-e^{-xu}) \right)^p \dd u  \\ 
	&\leq \left(t \wedge x^{-1} \right)^p t.
\end{align*}

\subsection{Trawl process}

Suppose that $\psi \colon [0, \infty) \to (0, \psi(0)]$ is an integrable, 
non-increasing, and continuous function with $\psi(0) < \infty$. 
Let $V = (0, \psi(0)]$, $\pi  \equiv \Leb$ and 
\begin{equation*}
	f(x, v) = \bone(v \geq 0, \, 0 \leq x \leq \psi(v)).	
\end{equation*}
Then \eqref{eq:mma:def} defines the  monotonic trawl process, see \cite[Chapter 8]{barndorff2018book}. The set 
\begin{equation*}
	A_t \coloneqq \{
	(x, s) \colon s \leq t, \, 0 \leq x \leq \psi(t-s) \}	
\end{equation*}
is commonly referred to as the trawl set, while the function $\psi$ is called the trawl function. 

Trawl processes were first introduced in the work of Barndorff-Nielsen \cite{barndorff2011statID} as a class of stationary infinitely divisible stochastic processes. As explained in \cite{barndorff2014intTrawl}, the trawl set $A_0$ can be regarded as a fishing net dragged along the sea, so that at time $t$ it is in position $A_t$. A similar structure was considered by Wolpert and Taqqu \cite{wolpert2005fou}, who referred to such processes as upstairs representations.	

The conditions of Lemma \ref{lemma:fubini} hold by Lemma \ref{lemma:sufficient-fubini}. Note that $\psi$ is invertible, as it is continuous and non-increasing. Let 
$\psi^\leftarrow(x) = \inf\{u \geq 0 \colon \psi(u) \leq x\}$ denote its right-continuous  (generalized) inverse function. Then $\psi(t) \geq x$ if and only if $t \leq \psi^\leftarrow(x-)$,  where $\psi^\leftarrow(x-)$ stands for the 
left limit at $x$,
therefore, the integrated process has decomposition \eqref{eq:main-decomp} with 
\begin{equation*}
	f_2(x, u) = (\psi^{\leftarrow}(x-) - u)\bone (\psi^\leftarrow(x-) > u) \quad 
	\text{and} 
	\quad 
	f_1(x) = \psi^{\leftarrow}(x-).
\end{equation*}
For $u \leq 1$, we have	
\begin{equation*}
	f_2^\leftarrow(x, u \psi^\leftarrow(x-)) = \psi^\leftarrow(x-) - u \psi^\leftarrow(x-) = 
	(1-u) \psi^\leftarrow(x-) \leq u^{\varepsilon - 1} \psi^\leftarrow(x-),
\end{equation*}
which verifies Assumption \ref{assum-1} with $N = 1$, $K > 1$, and $\varepsilon \in (0, 1)$. Let $p \in [1,2]$. For $t > 0$, we have 
\begin{align*}
	&\int_0^\infty \left( 
	f_2(x, u) - f_2(x, t+u)
	\right)^p \dd u \\
	&= 
	\int_0^\infty 
	\Big( 
	(\psi^\leftarrow(x-) - u) \bone(\psi^\leftarrow(x-) > u)   \\
    & \qquad  - 
	(\psi^\leftarrow(x-) - t - u) \bone(\psi^\leftarrow(x-) > t+u)		
	\Big)^p 
	\dd u \\ 
	&=
	\int_0^\infty \left( 
	(\psi^\leftarrow(x-) - u) \bone(u < \psi^\leftarrow(x-) < t + u) + 
	t \bone(\psi^\leftarrow(x-) > t + u)
	\right)^p \dd u \\ 
	&\leq 
	\int_0^\infty (t \wedge \psi^\leftarrow(x-))^p \bone(u < \psi^\leftarrow(x-)) \dd u \\
	&= (t \wedge \psi^\leftarrow(x-))^p \psi^\leftarrow(x-)
\end{align*}
and
\begin{align*}
	&\int_0^t \left( f_1(x) - f_2(x, u) \right)^p \dd u \\
	&= 
	\int_0^t
	\left( 
	\psi^\leftarrow(x-) - (\psi^\leftarrow(x-)- u) \bone(\psi^\leftarrow(x-) > u)		
	\right)^p 
	\dd u \\ 
	&=  
	\int_0^t \left(
	\psi^\leftarrow(x-) \bone(0 < \psi^\leftarrow(x-) < u) + 
	u \bone (\psi^\leftarrow(x-) > u)
	\right)^p \dd u \\ 
	&\leq 
	\int_0^t (t \wedge \psi^\leftarrow(x-))^p \bone(u > 0) \dd u \\ 
	&= 
	(t \wedge \psi^\leftarrow(x-))^p \, t.
\end{align*}
Therefore, Assumption \ref{assum-2} holds.

Summarizing, as a consequence of the general theorems we obtain the following. (Since we integrate with respect to the Lebesgue measure, we can write $\psi^{\leftarrow}(x)$ instead of $\psi^\leftarrow(x-)$ in the conditions below.)

\begin{cor}
	Assume that $a = \int_{|z| \leq 1} |z| \lambda (\dd z) < \infty$ and $b = 0$. Let $\gamma \in (0, 2)$ be such that
	\begin{equation} \label{eq:main-trawl}
		\int_0^{\psi(0)} \int_{\R} 
		|z|^\gamma \psi^\leftarrow(x)^\gamma 
        \bone(|z|\psi^\leftarrow(x) > 1) 
		\lambda (\dd z) \dd x  < \infty.
	\end{equation}
	Then
	\begin{equation*}
		\lim_{t \to \infty} 
		\frac{X^*(t) - \bone(\gamma \geq 1) \E X^*(t)}{t^{1/\gamma}}
		= 0 \quad \text{a.s.}
	\end{equation*}
	If \eqref{eq:main-trawl} holds for $\gamma = 2$, then 
	\begin{equation*}
		\limsup_{t \to \infty} 
		\frac{|X^*(t) - \E X^*(t)|}{\sqrt{2 t \log \log t}}
		= \sqrt{\Var(X^*(1))} \quad \text{a.s.}
	\end{equation*}
\end{cor}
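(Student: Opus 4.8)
The plan is to read the Corollary as the specialization of Theorem~\ref{tm:finite-var} to the trawl kernel, so the argument is short and amounts to bookkeeping. The two standing hypotheses $a=\int_{|z|\le 1}|z|\lambda(\dd z)<\infty$ and $b=0$ are assumed, and the material immediately preceding the Corollary supplies everything else Theorem~\ref{tm:finite-var} requires: integrability of $\psi$ gives $f_1\in L^1(\Leb)$ so that $X^*$ is well defined; the conditions of Lemma~\ref{lemma:fubini} hold by Lemma~\ref{lemma:sufficient-fubini}, so $X^*$ has the decomposition \eqref{eq:main-decomp} with $f_1(x)=\psi^\leftarrow(x-)$ and $f_2(x,u)=(\psi^\leftarrow(x-)-u)\bone(\psi^\leftarrow(x-)>u)$; and Assumption~\ref{assum-1} has been verified (with $N=1$, $K>1$, $\varepsilon\in(0,1)$), which is all that Theorem~\ref{tm:finite-var} needs when $\gamma>1$ (Assumption~\ref{assum-2} is also checked above, though it is not used here).

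The one remaining step is to check that the general condition \eqref{eq:main-tm} reduces to \eqref{eq:main-trawl} in this example. Taking $V=(0,\psi(0)]$, $\pi=\Leb$ and $f_1(x)=\psi^\leftarrow(x-)$ in \eqref{eq:main-tm} gives
\begin{equation*}
	\int_0^{\psi(0)}\int_{\R} |z|^\gamma\,\psi^\leftarrow(x-)^\gamma\,\bone\bigl(|z|\psi^\leftarrow(x-)>1\bigr)\,\lambda(\dd z)\,\dd x<\infty .
\end{equation*}
Since $\psi^\leftarrow$ is non-increasing it has at most countably many discontinuities, hence $\psi^\leftarrow(x-)=\psi^\leftarrow(x)$ for Lebesgue-almost every $x$, and the integral above is unchanged; this is exactly \eqref{eq:main-trawl}, which justifies writing $\psi^\leftarrow(x)$ for $\psi^\leftarrow(x-)$ in the statement, as announced in the parenthetical remark. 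With this identification, Theorem~\ref{tm:finite-var} applies verbatim: for $\gamma\in(0,2)$ it yields $t^{-1/\gamma}(X^*(t)-\bone(\gamma\ge 1)\E X^*(t))\to 0$ a.s., and for $\gamma=2$ it yields the law of the iterated logarithm with constant $\sqrt{\Var(X^*(1))}$.

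There is essentially no obstacle: the Corollary is a direct reading-off of the general theorem once the trawl kernel has been plugged into the hypotheses. The only point requiring (minor) care is the a.e.\ identification of $\psi^\leftarrow(x-)$ with $\psi^\leftarrow(x)$; if one prefers to bypass even this, one can note from the decomposition \eqref{eq:main-rates} that the finiteness of the integral in \eqref{eq:main-trawl} is insensitive to modifying $\psi^\leftarrow$ on a Lebesgue-null set, so the two formulations are literally equivalent.
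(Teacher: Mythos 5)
Your proposal is correct and follows exactly the paper's route: the corollary is obtained by plugging the trawl kernel into Theorem~\ref{tm:finite-var}, using the verifications of the Fubini conditions (via Lemma~\ref{lemma:sufficient-fubini}) and of Assumption~\ref{assum-1} carried out just before the corollary, and identifying \eqref{eq:main-tm} with \eqref{eq:main-trawl}. Your justification for replacing $\psi^\leftarrow(x-)$ by $\psi^\leftarrow(x)$ (they agree off a countable, hence Lebesgue-null, set) is precisely the point the paper disposes of in its parenthetical remark.
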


\begin{cor}
	Assume $\int_{|z| \leq 1} |z| \lambda (\dd z) = \infty$ and let $a = b = 0$. 
	\begin{enumerate} [label = (\roman*)]
		\item Let $\beta \leq 1 + \alpha$, $\gamma \leq 2$ and assume that \eqref{eq:main-trawl} holds. Then 
		\begin{equation*}
			\limsup_{t \to \infty} 
			\frac{|X^*(t) - \bone(\gamma \geq 1) \E X^*(t)|}{t^{1/\gamma} \log t} \leq 1 \quad \text{a.s.}
		\end{equation*}
		\item If $\beta > 1+\alpha$, $\gamma \leq (1 - \alpha/\beta)^{-1}$ and  
		\begin{equation*}
			\int_{(0, \infty)} \int_{|z| > 1} 
			|z|^\gamma \psi^\leftarrow(x)^\gamma \bone(|z| \psi^\leftarrow(x) > 1) \lambda (\dd z)\dd x < \infty,
		\end{equation*}
		then 
		\begin{equation*}
			\lim_{t \to \infty}
			\frac{X^*(t) - \bone(\gamma \geq 1) \E X^*(t)}{t^{1/\gamma}} = 0 \quad \text{a.s.}
		\end{equation*}			
	\end{enumerate}
\end{cor}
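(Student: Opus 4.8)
The plan is to obtain both parts as immediate specializations of Theorem~\ref{tm:infinite-variation} to the trawl kernel $f(x,v)=\bone(v\geq 0,\,0\leq x\leq\psi(v))$ on $V=(0,\psi(0)]$ with $\pi\equiv\Leb$. Two things have to be put in place: first, that this choice of $f$ satisfies the standing hypotheses of that theorem (the Fubini conditions behind Lemma~\ref{lemma:fubini}, Assumption~\ref{assum-2}, and Assumption~\ref{assum-1} when $\gamma>1$); second, that the abstract condition \eqref{eq:main-tm} becomes exactly \eqref{eq:main-trawl} in this setting.

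For the first point I would simply collect what the discussion preceding the corollary already establishes: the conditions of Lemma~\ref{lemma:fubini} hold by Lemma~\ref{lemma:sufficient-fubini}; the integrated process has the decomposition \eqref{eq:main-decomp} with $f_1(x)=\psi^\leftarrow(x-)$ and $f_2(x,u)=(\psi^\leftarrow(x-)-u)\bone(\psi^\leftarrow(x-)>u)$; Assumption~\ref{assum-1} holds with $N=1$, $K>1$, $\varepsilon\in(0,1)$; and Assumption~\ref{assum-2} holds (the two displayed estimates give it with $c_p=1$). Since the corollary assumes $\int_{|z|\leq1}|z|\,\lambda(\dd z)=\infty$ and $a=b=0$, all hypotheses of Theorem~\ref{tm:infinite-variation} are in force in both parts, the parameters $\alpha,\beta,\eta$ being those attached to the trawl data, i.e.\ $\alpha_0=\sup\{\alpha'\geq0:\int_0^{\psi(0)}\psi^\leftarrow(x)^{1+\alpha'}\bone(\psi^\leftarrow(x)>1)\,\dd x<\infty\}$ and $\beta_0,\eta_\infty$ read off from $\lambda$ alone.

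For the second point I would substitute $f_1(x)=\psi^\leftarrow(x-)$ and $\pi(\dd x)=\dd x$ on $(0,\psi(0)]$ into \eqref{eq:main-tm}; since $\psi^\leftarrow$ is non-increasing it has at most countably many discontinuities, so $\psi^\leftarrow(x-)=\psi^\leftarrow(x)$ for $\Leb$-a.e.\ $x$ and the resulting integral is precisely \eqref{eq:main-trawl}. The same substitution turns the $|z|>1$ integral of Theorem~\ref{tm:infinite-variation}(ii) into the $|z|>1$ integral of the corollary. With these identifications, part (i) is Theorem~\ref{tm:infinite-variation}(i) under $\beta\leq1+\alpha$, $\gamma\leq2$, and part (ii) is Theorem~\ref{tm:infinite-variation}(ii) under $\beta>1+\alpha$ and the stated integrability of the $|z|>1$ part (with $\gamma$ in the admissible range of that theorem).

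There is no real obstacle here: the corollary is a bookkeeping specialization of an already-proved theorem. The only mildly delicate step is the a.e.\ identification $\psi^\leftarrow(x-)=\psi^\leftarrow(x)$ that licenses writing \eqref{eq:main-trawl} without left limits, together with the verification — recorded above — that the trawl kernel really does meet Assumptions~\ref{assum-1} and~\ref{assum-2} and the Fubini conditions. Everything quantitative is inherited verbatim from Theorem~\ref{tm:infinite-variation}.
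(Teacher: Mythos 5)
Your proposal is correct and follows exactly the route the paper intends: the corollary is obtained by specializing Theorem~\ref{tm:infinite-variation} to the trawl kernel, using the verifications of the Fubini conditions, Assumption~\ref{assum-1} and Assumption~\ref{assum-2} carried out in the text preceding the corollary, together with the a.e.\ identification $\psi^\leftarrow(x-)=\psi^\leftarrow(x)$ that the paper itself records in the parenthetical remark before the corollaries. The only point worth flagging is the mismatch between the corollary's $\gamma\leq(1-\alpha/\beta)^{-1}$ in part (ii) and the strict inequality $\gamma<(1-\alpha/\beta)^{-1}$ required by Theorem~\ref{tm:infinite-variation}(ii), which you correctly sidestep by restricting to the theorem's admissible range.
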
 

\begin{cor}
	Let $a = 0$, $b > 0$ and $\lambda  \equiv 0$. If $\int_0^{\psi(0)} \psi^\leftarrow(x)^2 \dd x < \infty$, then 
	\begin{equation*}
		\limsup_{t \to \infty} 
		\frac{|X^*(t)|}{\sqrt{2 b \int_0^{\psi(0)} \psi^\leftarrow(x)^2 \dd x \,  t \log \log t }} = 1 \quad \text{a.s.}
	\end{equation*}
\end{cor}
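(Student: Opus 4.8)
The plan is to read the statement off as a direct specialization of Theorem~\ref{tm-gaussian} to the trawl kernel, so essentially all the work is bookkeeping. First I would record that in the trawl setting $V = (0,\psi(0)]$, $\pi \equiv \Leb$, and $f(x,v) = \bone(v \geq 0,\ 0 \leq x \leq \psi(v))$; since $\psi$ is integrable, $\int_V \int_{\R} f(x,v)^2 \, \dd v \, \pi(\dd x) = \int_0^{\psi(0)} \psi^\leftarrow(x-) \, \dd x = \int_0^\infty \psi(v)\,\dd v < \infty$, so $f \in L^2(\pi \times \Leb)$ and the purely Gaussian integral \eqref{eq:mma:def} (with $a = 0$, $\lambda \equiv 0$, $b > 0$) is well defined. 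In this case $X(1)$ is a centered Gaussian random variable, so $\E|X(1)| < \infty$ and the interchange of integration in Lemma~\ref{lemma:fubini} is legitimate via the stochastic Fubini theorem, giving the decomposition \eqref{eq:main-decomp}. Hence $X^*$ is the integrated MMA process in the sense of Subsection~\ref{subsec:mma}, and Theorem~\ref{tm-gaussian} is applicable provided $\int_V f_1^2 \, \dd \pi < \infty$.

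Next I would identify $f_1$. As computed in the text preceding the corollary, $f_2(x,u) = (\psi^\leftarrow(x-) - u)\bone(\psi^\leftarrow(x-) > u)$ and $f_1(x) = \psi^\leftarrow(x-)$. Since $\psi^\leftarrow$ is monotone, it has at most countably many discontinuities, so $\psi^\leftarrow(x-) = \psi^\leftarrow(x)$ for Lebesgue-a.e.\ $x$; therefore
\[
\int_V f_1^2 \, \dd\pi = \int_0^{\psi(0)} \psi^\leftarrow(x-)^2 \, \dd x = \int_0^{\psi(0)} \psi^\leftarrow(x)^2 \, \dd x,
\]
which is finite by hypothesis. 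This is exactly the condition required by Theorem~\ref{tm-gaussian}.

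Finally I would substitute this value of $\int_V f_1^2\,\dd\pi$ into the conclusion of Theorem~\ref{tm-gaussian}, which yields
\[
\limsup_{t \to \infty} \frac{|X^*(t)|}{\sqrt{2b \int_0^{\psi(0)} \psi^\leftarrow(x)^2\,\dd x \; t \log\log t}} = 1 \quad \text{a.s.},
\]
as claimed. There is no genuine obstacle here: the only points deserving a word of justification are that $f \in L^2(\pi\times\Leb)$ (so the Gaussian MMA process exists and the representation \eqref{eq:main-decomp} holds) and the a.e.\ identification of $\psi^\leftarrow(\cdot -)$ with $\psi^\leftarrow$, both of which are routine. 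If one prefers to avoid invoking Lemma~\ref{lemma:fubini}, one can alternatively note that in the Gaussian case the Fubini conditions \eqref{eq:fubini-cond-1}--\eqref{eq:fubini-cond-3} hold trivially because $\lambda \equiv 0$.
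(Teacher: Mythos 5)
Your proposal is correct and follows essentially the same route as the paper: the corollary is obtained by specializing Theorem \ref{tm-gaussian} to the trawl kernel, using the identification $f_1(x) = \psi^\leftarrow(x-)$ computed in the text and the paper's own remark that $\psi^\leftarrow(x-)$ may be replaced by $\psi^\leftarrow(x)$ under Lebesgue integration. The additional checks you record (integrability of $f$, applicability of Lemma \ref{lemma:fubini} via $\E|X(1)|<\infty$ or the triviality of \eqref{eq:fubini-cond-1}--\eqref{eq:fubini-cond-3} when $\lambda\equiv 0$) are consistent with the paper's verification preceding the corollaries.
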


\subsection{SupfOU processes}

Suppose that $V = (0, \infty)$. Motivated by the fractional Ornstein--Uhlenbeck processes introduced by Wolpert and Taqqu \cite[Section 4.2.2]{wolpert2005fou}, for $\kappa > 0$, we consider a kernel of the form
\begin{equation*}
	f(x, v) = \frac{1}{\Gamma(\kappa)} (xv)^{\kappa - 1} e^{-xv} \bone (v > 0).	
\end{equation*}
We will call the process defined by \eqref{eq:mma:def} with such a kernel a superposition of fractional Ornstein--Uhlenbeck processes (supfOU process). An application of \cite[Proposition 34]{barndorff2018book} shows that such a process is well-defined.

\begin{proposition} \label{prop:supfou-existence}
	The supfOU process exists if and only if $m_{-1}(\pi) < \infty$, 
	\begin{equation*}
		\int_{|z| > 1} \log |z| \lambda(\dd z) < \infty,	
	\end{equation*}
	and one of the following holds
    \begin{itemize}
    \item[(i)] $\kappa > 1/2$, or 
    \item[(ii)] $\kappa = 1/2$ and $\int_{|z| \leq 1} z^2 \log |z|^{-1} \lambda (\dd z ) < \infty$, or
    \item[(iii)] $\kappa < 1/2$ and 
    \begin{equation*}
        \int_{|z| \leq 1} |z|^{\frac{1}{1-\kappa}} \lambda(\dd z) < \infty.    
    \end{equation*}
    
    \end{itemize}
\end{proposition}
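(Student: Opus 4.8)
The plan is to apply the integrability criterion \eqref{eq:RR-cond} of Rajput--Rosiński (in the form \cite[Proposition 34]{barndorff2018book}) directly to the supfOU kernel $f(x,v) = \Gamma(\kappa)^{-1}(xv)^{\kappa-1}e^{-xv}\bone(v>0)$ and to show that the three integrability conditions there reduce, after elementary estimates, exactly to the three stated requirements: $m_{-1}(\pi)<\infty$, the logarithmic moment of $\lambda$ at infinity, and the case dichotomy on $\kappa$. Since the kernel is nonnegative, one may work with $|f|=f$ throughout. The first step is to compute the relevant "profiles'' of the kernel. For fixed $x>0$, the substitution $w = xv$ shows that $\int_0^\infty f(x,v)^q\,\dd v = x^{q-1}\cdot c_{q,\kappa}$ whenever the integral $c_{q,\kappa}=\Gamma(\kappa)^{-q}\int_0^\infty w^{q(\kappa-1)}e^{-qw}\,\dd w$ converges, which happens precisely when $q(\kappa-1) > -1$, i.e. $q < 1/(1-\kappa)$ when $\kappa<1$ (no constraint when $\kappa\geq 1$). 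This immediately explains why the case $\kappa<1/2$ needs $\int_{|z|\le1}|z|^{1/(1-\kappa)}\lambda(\dd z)<\infty$: the $V_0$ term involves $y^2z^2\wedge 1$, and the "small-$z$'' part contributes $z^2\int f(x,v)^2\,\dd v$, which is finite in $v$ only for $2<1/(1-\kappa)$, i.e. $\kappa>1/2$; for $\kappa\le1/2$ the $v$-integral of $f^2$ diverges near $v=0$ and one must instead split according to whether $|z|f(x,v)\lessgtr 1$, producing the sharper exponent.

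Concretely, I would handle the three conditions in \eqref{eq:RR-cond} as follows. The Gaussian term $\int_V\int_\R b f(x,v)^2\,\dd v\,\pi(\dd x)$ is irrelevant here since we are checking existence for a general triplet, but in the genuinely infinite-variance part it is subsumed by the case analysis; more to the point, for the $V_0$-term write $V_0(f(x,v)) = \int_{|z|\le 1}(1\wedge f(x,v)^2z^2)\lambda(\dd z) + \int_{|z|>1}(1\wedge f(x,v)^2z^2)\lambda(\dd z)$. For the large-$z$ piece, $1\wedge f(x,v)^2z^2 \le \bone(|z|f(x,v)>1) + f(x,v)^2z^2\bone(|z|f(x,v)\le 1)$; integrating in $v$ and then $x$, the indicator part gives a term controlled by $\int_{|z|>1}\pi(\{x: f_1(x)\cdots\})$ — here one uses that $\sup_v f(x,v)$ behaves like a power of $x$ — and ultimately reduces to $\int_{|z|>1}\log|z|\,\lambda(\dd z)<\infty$ together with $m_{-1}(\pi)<\infty$, exactly as in the supOU case $\kappa=1$. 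For the small-$z$ piece one splits at $|z|f(x,v)=1$ in the same way; the $f^2z^2$-on-$\{|z|f<1\}$ part is finite in $v$ when $\kappa>1/2$, while when $\kappa\le1/2$ one keeps the indicator, the resulting $v$-integral is $\Leb(\{v: f(x,v)>1/|z|\})$, and a direct computation of this level set (using $f(x,v)\asymp \Gamma(\kappa)^{-1}(xv)^{\kappa-1}$ near $v=0$) yields the exponent $1/(1-\kappa)$ in $|z|$, hence condition (iii); the borderline $\kappa=1/2$ produces the extra $\log|z|^{-1}$ factor in (ii). Finally the $U$-term: since $U(y) = a y + y\int_\R z(\bone(|yz|\le1)-\bone(|z|\le1))\lambda(\dd z)$, one bounds $|U(y)|\le |a|\,|y| + |y|\int_{|z|\le1,|yz|>1}|z|\lambda(\dd z) + |y|\int_{|z|>1,|yz|\le1}|z|\lambda(\dd z)$, and each summand is dominated (after integrating $y=f(x,v)$) by the quantities already shown finite, so the $U$-condition carries no new restriction. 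Necessity follows by reversing these estimates: each of the conditions appears as a lower bound in one of the three integrals, since all integrands are nonnegative and the splitting inequalities are, up to constants, two-sided on the relevant regions.

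The main obstacle, and the place requiring genuine care rather than bookkeeping, is the sharp treatment of the small-jump ($|z|\le1$) contribution to the $V_0$-integral in the regime $\kappa\le 1/2$. One must identify precisely the level set $\{v>0: f(x,v) > 1/|z|\}$ for small $|z|$: because $f(x,\cdot)$ is not monotone when $\kappa<1$ (it rises from $0$, peaks at $v=(\kappa-1)/(-x)\cdot(\text{const})$, then decays), the set is an interval $(v_-(x,z), v_+(x,z))$ whose left endpoint $v_-$ is governed by the $(xv)^{\kappa-1}$ singularity and whose length therefore scales like $|z|^{-1/(1-\kappa)} x^{-1}$ up to constants and lower-order corrections; getting the exponent exactly right — and seeing that it is the left endpoint, not the right, that dominates — is what pins down $1/(1-\kappa)$ and, at $\kappa=1/2$ where the naive power integral is logarithmically divergent, the $\log|z|^{-1}$ correction. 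Once this level-set asymptotic is in hand, all remaining steps are routine Tonelli interchanges and the same $m_{-1}(\pi)$/$\log$-moment reductions that already appear in the supOU analysis, so I would state the level-set estimate as a short internal lemma and then assemble the three conditions from it.
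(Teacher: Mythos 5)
Your overall route is the same as the paper's: reduce to a one--dimensional problem in $y=xv$ by the scaling substitution (which is where $m_{-1}(\pi)$ comes from), then verify the Rajput--Rosi\'nski conditions \eqref{eq:RR-cond} term by term, with the large jumps producing the $\log|z|$ moment and the small-jump $V_0$ term producing the trichotomy in $\kappa$ via the behaviour of the kernel near $v=0$; the paper does exactly this, splitting into a large-jump lemma and Lemma \ref{lemma:supfOU-2} for the compensated small jumps. So there is no methodological divergence. However, several of your concrete computations are wrong as written, and two of them sit precisely at the step you yourself single out as the delicate one.

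First, the substitution $w=xv$ gives $\int_0^\infty f(x,v)^q\,\dd v = x^{-1}c_{q,\kappa}$, not $x^{q-1}c_{q,\kappa}$: the Jacobian contributes $x^{-1}$ and all of the $x$-dependence of $(xv)^{q(\kappa-1)}$ is absorbed into $w$. The uniform $x^{-1}$ is exactly why $m_{-1}(\pi)<\infty$ is the single condition on $\pi$ regardless of which of the three integrals is being checked; with your exponent the condition on $\pi$ would vary with $q$. Second, and more seriously, your description of the level set is backwards. For $\kappa<1$ the map $v\mapsto f(x,v)$ is strictly decreasing and blows up as $v\downarrow 0$; the interior maximum at $v=(\kappa-1)/x$ exists only for $\kappa>1$, where $f$ is bounded and no condition near $v=0$ arises at all. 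Hence for $\kappa<1$ and small $|z|$ the set $\{v\colon f(x,v)>1/|z|\}$ is the one-sided interval $(0,v_+)$ with $xv_+\asymp|z|^{1/(1-\kappa)}$, so its measure is $\asymp x^{-1}|z|^{1/(1-\kappa)}$ with a \emph{positive} exponent (the set shrinks as $|z|\downarrow0$), not $|z|^{-1/(1-\kappa)}$ as you wrote. Third, the logarithm in case (ii) does not come from this level-set measure but from the truncated second moment $\int_0^\infty f_\kappa(y)^2\bone(f_\kappa(y)\le 1/|z|)\,\dd y$, which behaves like $|z|^{(2\kappa-1)/(1-\kappa)}$ for $\kappa<1/2$ and like $2\log|z|^{-1}$ at $\kappa=1/2$; both this term and the level-set term must be estimated (for $\kappa<1/2$ they happen to give the same contribution $|z|^{1/(1-\kappa)}$ after multiplying by $z^2$), whereas your narrative keeps only the indicator part. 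Finally, the large-jump reduction to $\int_{|z|>1}\log|z|\,\lambda(\dd z)$ is not quite ``as in the supOU case'': for $\kappa\ne1$ one needs the asymptotics of $\Leb\{y\colon |z|y^{\kappa-1}e^{-y}>1\}$ as $|z|\to\infty$, which the paper obtains via the inverse of $h(y)=y^{1-\kappa}e^y$ and Karamata's theorem --- a short but genuine computation you have skipped. None of these defects changes the architecture, but the central level-set lemma must be restated and proved correctly before the argument is complete.
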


The proof is given in Section \ref{sec-proofs}. 

For the conditions of Lemma \ref{lemma:fubini}, note that $f$ is bounded for $\kappa>1$, hence \eqref{eq:fubini-cond-1} holds by Lemma \ref{lemma:sufficient-fubini}. Similar calculation as in Lemma \ref{lemma:supfOU-2} shows that 
\eqref{eq:fubini-cond-1} also holds for $\kappa \in (0,1)$,
since $\int_{(0,1]} z^{1/(1-\kappa)} \lambda(\dd z) < \infty$. 

For $\kappa<1$, $f$ is non-increasing, hence \eqref{eq:fubini-cond-3} holds by Lemma \ref{lemma:sufficient-fubini}. When $\kappa>1$, \eqref{eq:fubini-cond-3} holds if we take
\begin{equation*}
	g(x,s)=\begin{cases}
		\frac{1}{\Gamma(\kappa)} (\kappa-1)^{\kappa - 1} e^{-\kappa+1},& \ s\leq (\kappa-1)x^{-1},\\
		f(x,s),& \  s> (\kappa-1)x^{-1},\\
	\end{cases}
\end{equation*}
since $f(x,\cdot)$ is increasing and bounded on $(0,(\kappa-1)x^{-1})$, 
and 
\[
\begin{split}
&\int_V \int_0^{\infty} \int_{|z|>1}
\left( 1  \wedge |z| g(x,s) \right) \lambda(\dd z) \dd s \pi(\dd x) \\
& \leq 
\int_V \int_0^{\infty} \int_{|z|>1}
\left( 1  \wedge |z| f(x,s) \right) \lambda(\dd z) \dd s \pi(\dd x) 
+ \int_V \frac{\kappa -1}{x} \pi(\dd x) \, \overline \lambda(1) < \infty.
\end{split}
\]

Finally, \eqref{eq:fubini-cond-2} holds if \eqref{eq:fubini-supOU-suff} holds. As with the supOU processes, we assume \eqref{eq:fubini-supOU-suff} when $\E |X(1)|=\infty$.

The integrated process has decomposition \eqref{eq:main-decomp} with 
\begin{equation*}
	f_2(x, u) =  x^{-1} \frac{\Gamma_{\kappa}(xu)}{\Gamma(\kappa)} \quad 
	\text{and} \quad 
	f_1(x) = x^{-1},
\end{equation*}
where $\Gamma_{\kappa}(x)= \int_x^\infty y^{\kappa - 1} e^{-y} \dd y$ represents the upper incomplete gamma function. Note that for $u \leq x^{-1}$, the inverse of $f_2(x, \cdot)$ can be expressed as
\begin{equation*}
	f_2^\leftarrow(x, u) = x^{-1} \Gamma_{\kappa}^\leftarrow(\Gamma(\kappa) x u).
\end{equation*}
Since $\Gamma_{\kappa}(y) \sim y^{\kappa - 1} e^{-y}$ as $y \to \infty$, it follows that $\Gamma_{\kappa}^{\leftarrow}(u) \sim \log(u^{-1})$ as $u \downarrow 0$. 
Therefore, as $u \downarrow 0$, uniformly in $x$ 
\begin{equation*}
	f_2^\leftarrow(x, u x^{-1}) = x^{-1} \Gamma_{\kappa}^{\leftarrow}
	\left(\Gamma(\kappa) x \frac{u}{x}\right)
	\sim x^{-1} \log\left(\frac{1}{\Gamma(\kappa) u}\right),
\end{equation*}
which implies Assumption \ref{assum-1}. 

To verify condition \eqref{eq:f-cond-int1}, we write
\begin{equation*}
\begin{split}
    & \int_0^\infty \left( f_2(x, u) - f_2(x, t+u) \right)^p \dd u \\
	& = \frac{1}{\Gamma(\kappa)^{p}}
	\int_0^\infty x^{-p-1} \left( \int_s^{tx + s} y^{\kappa -1} e^{-y} \dd y \right)^p
	\dd s.
\end{split}
\end{equation*}
It suffices to show that for some $c_p$
\begin{equation*}
	\int_0^\infty \left( \int_s^{tx + s} y^{\kappa -1} e^{-y} \dd y \right)^p \dd s 
	\leq c_p \Gamma(\kappa)^p (tx \wedge 1)^p,  
\end{equation*}
or equivalently, 
\begin{equation*}
	\int_0^\infty \left( \int_s^{t + s} y^{\kappa -1} e^{-y} \dd y \right)^p
	\dd s \leq c_p \Gamma(\kappa)^p (t \wedge 1)^p.  
\end{equation*}
For $t\geq 1$, this is immediate since
\begin{equation*}
	\int_0^\infty \left( \int_s^{\infty} y^{\kappa -1} e^{-y} \dd y \right)^p
	\dd s < \infty.
\end{equation*}
For $t < 1$, observe that
\begin{equation*}
	\lim_{t \downarrow 0}
	t^{-p} \int_0^\infty \left( \int_s^{t + s} y^{\kappa -1} e^{-y} \dd y \right)^p
	\dd s
	= \int_0^\infty (s^{\kappa -1} e^{-s})^p \dd s,
\end{equation*}
by a simple application of Lebesgue's dominated convergence theorem.
The latter integral converges whenever $\kappa > 1 - p^{-1}$. 
In particular, if $\kappa > 1/2$, it is finite for all $p \in [1,2]$. 

Similarly as above, condition \eqref{eq:f-cond-int2} is equivalent to the 
boundedness of
\begin{equation*}
	(tx)^{-p-1} \int_0^{tx} \left( \int_0^s y^{\kappa -1} e^{-y} \dd y \right)^p \dd s.
\end{equation*}
Changing $tx$ to $t$, we see that the condition holds for large $t$, while it does not as $t \downarrow 0$. Therefore, Assumption \ref{assum-2} is satisfied only when $\kappa > 1$.

Summarizing, we obtain the statements for the supfOU process. Note that in the integrable small jumps and Gaussian cases the results hold for any $\kappa > 0$,
while for non-integrable small jumps we restrict to $\kappa > 1$.

\begin{cor}
	Let $\kappa > 0$, and assume that $a = \int_{|z| \leq 1} |z| \lambda (\dd z) < \infty$ and $b = 0$. Let $\gamma \in (0,2)$ be such that 
	\begin{equation} \label{eq:main-supfou}
		\int_{(0, \infty)} \int_{\R} 
		|z|^\gamma x^{-\gamma} \bone(|z|x^{-1} > 1) 
		\lambda (\dd z) \pi(\dd x)  < \infty.
	\end{equation}
	Then 
	\begin{equation*}
		\lim_{t \to \infty} 
		\frac{X^*(t) - \bone(\gamma \geq 1) \E X^*(t)}{t^{1/\gamma}}
		= 0 \quad \text{a.s.}
	\end{equation*}
	If \eqref{eq:main-supfou} holds for $\gamma = 2$, then 
	\begin{equation*}
		\limsup_{t \to \infty} 
		\frac{|X^*(t) - tm_1(\lambda ) m_{-1}(\pi)|}{\sqrt{2 t \log \log t}}
		= \sqrt{\Var(X^*(1))} \quad \text{a.s.}
	\end{equation*}
\end{cor}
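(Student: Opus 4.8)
The plan is to obtain the corollary as a direct specialization of Theorem~\ref{tm:finite-var} to the supfOU kernel, combined with an explicit evaluation of the centering in the $\gamma=2$ case. The setting of Theorem~\ref{tm:finite-var}---namely $a=\int_{|z|\le1}|z|\lambda(\dd z)<\infty$ and $b=0$---is precisely the hypothesis of the corollary, and the discussion preceding the statement has already shown that the supfOU process satisfies the conditions of Lemma~\ref{lemma:fubini} (via Lemma~\ref{lemma:sufficient-fubini} and the auxiliary function $g$ when $\kappa>1$, and via \eqref{eq:fubini-supOU-suff} when $\E|X(1)|=\infty$), as well as Assumption~\ref{assum-1} for every $\kappa>0$. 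Since Assumption~\ref{assum-1} is only needed when $\gamma>1$, Theorem~\ref{tm:finite-var} is applicable for all $\gamma\in(0,2]$, so no additional hypotheses beyond \eqref{eq:main-supfou} are required.

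The first step is to record that for the supfOU kernel $f_1(x)=\int_0^\infty \tfrac{1}{\Gamma(\kappa)}(xv)^{\kappa-1}e^{-xv}\,\dd v=x^{-1}$ (substitute $y=xv$ and use the definition of $\Gamma$), in agreement with the decomposition already written down above. Consequently the abstract condition \eqref{eq:main-tm} becomes
\[
	\int_V\int_\R |z|^\gamma f_1(x)^\gamma\,\bone(|z|f_1(x)>1)\,\lambda(\dd z)\,\pi(\dd x)
	=\int_{(0,\infty)}\int_\R |z|^\gamma x^{-\gamma}\,\bone(|z|x^{-1}>1)\,\lambda(\dd z)\,\pi(\dd x),
\]
which is exactly \eqref{eq:main-supfou}. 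Feeding this into Theorem~\ref{tm:finite-var} immediately yields $\lim_{t\to\infty}t^{-1/\gamma}\bigl(X^*(t)-\bone(\gamma\ge1)\E X^*(t)\bigr)=0$ a.s.\ for $\gamma\in(0,2)$, and, when \eqref{eq:main-supfou} holds with $\gamma=2$, the law of the iterated logarithm with constant $\sqrt{\Var(X^*(1))}$.

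It then remains only to rewrite the centering $\E X^*(t)$ appearing in the $\gamma=2$ statement in closed form. Here I would use that $X^*$ has stationary increments, so $\E X^*(t)=t\,\E X(0)$, and that \eqref{eq:main-supfou} with $\gamma=2$ forces $\int_{|z|>1}|z|^2\lambda(\dd z)<\infty$, so that the mean of $\Lambda$ exists; a computation of the first cumulant of $\Lambda$ shows that, under the normalization $a=\int_{|z|\le1}|z|\lambda(\dd z)$ of this subsection, its mean density equals $m_1(\lambda)$. Applying Lemma~\ref{lemma:fubini} and $\E|X(1)|<\infty$ to interchange expectation and stochastic integral gives $\E X(0)=m_1(\lambda)\int_V f_1(x)\,\pi(\dd x)=m_1(\lambda)\,m_{-1}(\pi)$, hence $\E X^*(t)=t\,m_1(\lambda)\,m_{-1}(\pi)$, which turns the law of the iterated logarithm into the stated form. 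The entire analytic difficulty is contained in Theorem~\ref{tm:finite-var}; at the level of this corollary the work is bookkeeping, and the only point requiring mild attention---the mean computation and the verification that $\E|X(1)|<\infty$ in the $\gamma=2$ regime---presents no genuine obstacle.
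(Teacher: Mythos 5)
Your proposal is correct and follows essentially the same route as the paper: the paper gives no separate proof of this corollary, having already verified in the text preceding it that $f_1(x)=x^{-1}$, that the Fubini conditions of Lemma~\ref{lemma:fubini} and Assumption~\ref{assum-1} hold for every $\kappa>0$ (Assumption~\ref{assum-2}, which fails for $\kappa\le 1$, is not needed in Theorem~\ref{tm:finite-var}), so the statement is exactly Theorem~\ref{tm:finite-var} with \eqref{eq:main-tm} specialized to \eqref{eq:main-supfou}. Your identification of the centering $\E X^*(t)=t\,m_1(\lambda)\,m_{-1}(\pi)$ in the $\gamma=2$ case is the same bookkeeping the paper performs implicitly.
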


\begin{cor}
	Let $\kappa > 1$, and assume $\int_{|z| \leq 1} |z| \lambda (\dd z) = \infty$ and let $a = b = 0$. 
	\begin{enumerate} [label = (\roman*)]
		\item Let $\beta \leq 1 + \alpha$, $\gamma \leq 2$ and assume \eqref{eq:main-supfou}. Then 
		\begin{equation*}
			\limsup_{t \to \infty} 
			\frac{|X^*(t) - \bone(\gamma \geq 1) \E X^*(t)|}{t^{1/\gamma} \log t} \leq 1 \quad \text{a.s.}
		\end{equation*}
		\item If $\beta > 1+\alpha$, $\gamma \leq (1 - \alpha/\beta)^{-1}$ and
		\begin{equation*}
			\int_{(0, \infty)} \int_{|z| > 1} 
			|z|^\gamma x^{-\gamma} \bone(|z| x^{-1} > 1) \lambda (\dd z) \pi(\dd x) < \infty,
		\end{equation*}
		then 
		\begin{equation*}
			\lim_{t \to \infty}
			\frac{X^*(t) - \bone(\gamma \geq 1) \E X^*(t)}{t^{1/\gamma}} = 0 \quad \text{a.s.}
		\end{equation*}			
	\end{enumerate}
\end{cor}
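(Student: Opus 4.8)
The plan is to obtain both parts of the corollary as direct specializations of Theorem~\ref{tm:infinite-variation} to the supfOU kernel. The work therefore splits into two tasks: (a) checking that the standing hypotheses of that theorem are in force here, and (b) rewriting its main condition and its conclusions in terms of the supfOU data.

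For task (a): the structural hypotheses $\int_{|z|\leq 1}|z|\lambda(\dd z)=\infty$ and $a=b=0$ of Theorem~\ref{tm:infinite-variation} are assumed in the corollary. The conditions of Lemma~\ref{lemma:fubini} were checked in the discussion preceding the corollary, using Lemma~\ref{lemma:sufficient-fubini} together with boundedness of $f$ when $\kappa>1$ (resp.\ monotonicity when $\kappa<1$) and, when $\E|X(1)|=\infty$, the extra assumption \eqref{eq:fubini-supOU-suff}. Assumption~\ref{assum-1}, needed since $\gamma$ may exceed $1$, follows from the asymptotics $f_2^\leftarrow(x,ux^{-1})\sim x^{-1}\log(1/(\Gamma(\kappa)u))$ as $u\downarrow 0$, uniform in $x$, established above: since a logarithm is $o(u^{\varepsilon-1})$ for every $\varepsilon\in(0,1)$, this yields admissible constants $N$, $K$ and exponent $\varepsilon$. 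Assumption~\ref{assum-2} is the one place where $\kappa>1$ is genuinely used: the reductions above identify \eqref{eq:f-cond-int1} with boundedness near $t=0$ of $t^{-p}\int_0^\infty\bigl(\int_s^{t+s}y^{\kappa-1}e^{-y}\dd y\bigr)^p\dd s$, whose limit $\int_0^\infty(s^{\kappa-1}e^{-s})^p\dd s$ is finite for all $p\in[1,2]$ as soon as $\kappa>1/2$, and identify \eqref{eq:f-cond-int2} with boundedness near $t=0$ of $t^{-p-1}\int_0^t\bigl(\int_0^s y^{\kappa-1}e^{-y}\dd y\bigr)^p\dd s$, which holds precisely when $\kappa>1$. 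Hence under the hypothesis $\kappa>1$ both conditions of Assumption~\ref{assum-2} hold and Theorem~\ref{tm:infinite-variation} is applicable.

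For task (b): since $f_1(x)=x^{-1}$ for the supfOU kernel, the main condition \eqref{eq:main-tm} becomes
\begin{equation*}
    \int_{(0,\infty)} \int_{\R} |z|^\gamma x^{-\gamma} \bone(|z| x^{-1} > 1) \, \lambda(\dd z) \, \pi(\dd x) < \infty ,
\end{equation*}
which is exactly \eqref{eq:main-supfou}; likewise the integral over $|z|>1$ appearing in Theorem~\ref{tm:infinite-variation}(ii) becomes the displayed condition in part (ii) of the corollary. Part (i) is then the case $\beta\leq 1+\alpha$, $\gamma\leq 2$, \eqref{eq:main-supfou} of Theorem~\ref{tm:infinite-variation}(i), and part (ii) is the case $\beta>1+\alpha$ with the stated restriction on $\gamma$ together with the $|z|>1$ integrability condition of Theorem~\ref{tm:infinite-variation}(ii); invoking the theorem reproduces the two stated conclusions.

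The only step that needs care is the verification of Assumption~\ref{assum-2}: one must confirm that the incomplete-gamma integrals governing the $p$-th moments of the two components of the decomposition \eqref{eq:main-decomp} obey the $(t\wedge f_1(x))^p$-type bounds uniformly in $x\in V$, and it is exactly there that $\kappa>1$ (rather than $\kappa>1/2$, which was enough in the integrable small jumps case) is forced. Everything else is routine bookkeeping once $f_1(x)=x^{-1}$ is substituted and the hypotheses of Theorem~\ref{tm:infinite-variation} are matched to those of the corollary.
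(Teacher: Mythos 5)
Your proposal is correct and follows essentially the same route as the paper: the paper proves this corollary implicitly by the verifications carried out in the supfOU subsection (Fubini conditions via Lemma \ref{lemma:sufficient-fubini} and \eqref{eq:fubini-supOU-suff}, Assumption \ref{assum-1} from the asymptotics of $\Gamma_\kappa^\leftarrow$, Assumption \ref{assum-2} from the incomplete-gamma estimates forcing $\kappa>1$), followed by substituting $f_1(x)=x^{-1}$ into Theorem \ref{tm:infinite-variation}. The only point you might have flagged is that Theorem \ref{tm:infinite-variation}(ii) is stated with the strict inequality $\gamma<(1-\alpha/\beta)^{-1}$ while the corollary writes $\gamma\leq(1-\alpha/\beta)^{-1}$; this mismatch is in the paper's own statements and does not affect your argument.
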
 

\begin{cor}
	Let $\kappa > 0$, and assume that $a = 0$, $b > 0$ and $\lambda  \equiv 0$. If $m_{-2}(\pi) < \infty$, then 
	\begin{equation*}
		\limsup_{t \to \infty} 
		\frac{|X^*(t)|}{\sqrt{2 b t m_{-2}(\pi) \log \log t }} = 1 \quad \text{a.s.}
	\end{equation*}
\end{cor}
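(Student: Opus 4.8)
The plan is to derive this as an immediate specialization of Theorem \ref{tm-gaussian}, the general law of the iterated logarithm for the purely Gaussian integrated MMA process. Since we are in the case $a=0$, $b>0$, $\lambda\equiv0$, that theorem applies as soon as we know (a) that the supfOU kernel gives a well-defined integrated MMA process in this setting, and (b) the value of $\int_V f_1^2\,\dd\pi$.

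For (a), with $\lambda\equiv0$ and $a=0$ the Rajput--Rosinski conditions \eqref{eq:RR-cond} reduce to $\int_V\int_\R b\,f(x,v)^2\,\dd v\,\pi(\dd x)<\infty$, and this, together with $f\in L^1(\pi\times\Leb)$ and the legitimacy of the interchange of integration in \eqref{eq:main-decomp}, is covered by the discussion of existence (Proposition \ref{prop:supfou-existence}) and of the Fubini conditions given earlier in this section; in particular $X(1)$ is Gaussian, hence $\E|X(1)|<\infty$, so Lemma \ref{lemma:fubini} applies. For (b), the substitution $u=xv$ yields
\begin{equation*}
	f_1(x)=\int_0^\infty f(x,v)\,\dd v=\frac{1}{\Gamma(\kappa)}\int_0^\infty (xv)^{\kappa-1}e^{-xv}\,\dd v=\frac{1}{\Gamma(\kappa)\,x}\int_0^\infty u^{\kappa-1}e^{-u}\,\dd u=\frac{1}{x},
\end{equation*}
in agreement with the decomposition recorded above; therefore $\int_V f_1^2\,\dd\pi=\int_{(0,\infty)}x^{-2}\,\pi(\dd x)=m_{-2}(\pi)$, which is finite by hypothesis.

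Feeding these into Theorem \ref{tm-gaussian} replaces the normalizing factor $\sqrt{2b\int_V f_1^2\,\dd\pi\;t\log\log t}$ by $\sqrt{2b\,m_{-2}(\pi)\,t\log\log t}$, which is precisely the claimed statement. The proof thus carries no genuine difficulty: the substantive content — the law of the iterated logarithm itself, obtained from Orey's theorem \cite[Theorem 1.1]{orey1972} for Gaussian processes — already resides in Theorem \ref{tm-gaussian}, and the only points that need (routine) attention are the verification that the Gaussian supfOU process meets the standing integrability and Fubini hypotheses and the explicit identification $\int_V f_1^2\,\dd\pi=m_{-2}(\pi)$.
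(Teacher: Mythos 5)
Your proposal is correct and follows exactly the paper's (implicit) argument: compute $f_1(x)=x^{-1}$ for the supfOU kernel, identify $\int_V f_1^2\,\dd\pi=m_{-2}(\pi)$, and invoke Theorem \ref{tm-gaussian}; the observation that $\E|X(1)|<\infty$ in the Gaussian case makes Lemma \ref{lemma:fubini} immediate is also the intended route. The only minor imprecision is the appeal to Proposition \ref{prop:supfou-existence} for existence, which addresses the jump-driven case, whereas in the purely Gaussian setting the relevant Rajput--Rosi\'nski condition is $\int_V\int_\R f(x,v)^2\,\dd v\,\pi(\dd x)<\infty$ (requiring $\kappa>1/2$ and $m_{-1}(\pi)<\infty$); this is subsumed in the standing assumption that $X^*$ is a well-defined integrated MMA process.
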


\subsection{Moving average processes}

If $V$ is a one-point space, $\Lambda$ becomes a random measure generated by the increments of a two-sided L\'evy process and the $x$ component in \eqref{eq:mma:def} may be ignored. Moreover, the function $f_1$ reduces to a finite constant, denoted by $I_f$, while 
\begin{equation*}
	f_2(u) = \int_u^\infty f(v) \dd v, \quad u \geq 0.
\end{equation*}
We assume the conditions of Lemma \ref{lemma:fubini} hold. Note that \eqref{eq:fubini-cond-2} is trivial in this case.

This special case of $X$ now being a moving average process is simpler to handle and the general statements of Theorems \ref{tm:finite-var}, \ref{tm:infinite-variation} and \ref{tm-gaussian} can be written in a more direct form. In particular, condition \eqref{eq:main-tm} reduces to a moment assumption on the L\'evy measure~$\lambda $. 

\begin{assumptionprime} \label{assum-1-ma}
	There exist $N \in \N$, $\varepsilon \in (0,1)$ and $K > 0$ such that 
	\begin{equation*}
		f_2^{\leftarrow} (I_f u) \leq K u^{\varepsilon - 1}, \quad 0 < u \leq 1/N.
	\end{equation*}
\end{assumptionprime}

\begin{assumptionprime} \label{assum-2-ma}
	For each $p \in [1,2]$ there exists $c_p > 0$, such that for all $t > 0$ 
	\begin{equation*} 
		\int_0^\infty (f_2(u) - f_2(t+u))^p \dd u \leq c_p (t \wedge I_f)^p 
	\end{equation*}		
	and		 
	\begin{equation*}
		\int_0^t (I_f - f_2(u))^p \dd u \leq c_p (t \wedge I_f)^p \, t.
	\end{equation*}
\end{assumptionprime}

\begin{cor}
	Assume that $a = \int_{|z|\leq 1} |z| \lambda (\dd z) < \infty$, $b = 0$ and let $\gamma \in (0, 2]$. Suppose that Assumption \ref{assum-1-ma} holds when $\gamma > 1$. 
	If $\gamma < 2$ and $m_{\gamma}(\lambda ) < \infty$, then 
	\begin{equation*}
		\lim_{t \to \infty} \frac{X^*(t) - \bone (\gamma \geq 1) \E X^*(t)}{t^{1/\gamma}} = 0 \quad \text{a.s.}	
	\end{equation*}
	If $m_2(\lambda ) < \infty$, then 
	\begin{equation*}
		\limsup_{t \to \infty} 
		\frac{|X^*(t) - \E X^*(t)|}
		{\sqrt{2 t \log \log t}}
		= \sqrt{\Var(X^*(1))} \quad \text{a.s.}
	\end{equation*}
\end{cor}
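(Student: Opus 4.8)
The plan is to obtain this corollary as the one-point-space specialization of Theorem \ref{tm:finite-var}: since all the hard analysis is already contained in that theorem, the work here is purely to check that its hypotheses become the ones stated once the $x$-variable is suppressed. Throughout one uses that, with $V$ a singleton, $f_1(x)$ is the finite constant $I_f = \int_0^\infty f(v)\,\dd v$ (finite because $f \in L^1(\Leb)$ by the standing MMA assumptions), that $f_2(x,\cdot)$ becomes the function $f_2$ of the statement, and that the decomposition \eqref{eq:main-decomp} of $X^*$ is available because the conditions of Lemma \ref{lemma:fubini} are assumed, \eqref{eq:fubini-cond-2} being trivial as $f_1$ is constant.

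First I would reduce the main condition. In this setting \eqref{eq:main-tm} reads
\begin{equation*}
	\int_{\R} |z|^\gamma I_f^\gamma \bone(|z| I_f > 1)\,\lambda(\dd z) < \infty,
\end{equation*}
i.e. $\int_{\{|z| > 1/I_f\}} |z|^\gamma \lambda(\dd z) < \infty$. Since $\lambda$ is a L\'evy measure it assigns finite mass to $\{|z| > \varepsilon\}$ for every $\varepsilon > 0$, so $\{|z| > 1/I_f\}$ and $\{|z| > 1\}$ differ by a set of finite $\lambda$-mass on which $|z|^\gamma$ is bounded; hence \eqref{eq:main-tm} is equivalent to $\int_{|z| > 1} |z|^\gamma \lambda(\dd z) < \infty$, which is in turn equivalent to $m_\gamma(\lambda) < \infty$ (equivalently $\E |L(1)|^\gamma < \infty$, see \cite{sato2013}). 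Likewise Assumption \ref{assum-1}, with the $x$-dependence removed and the constant $I_f$ absorbed into $K$, is exactly Assumption \ref{assum-1-ma}, which is required only for $\gamma > 1$; and Assumption \ref{assum-2} does not appear among the hypotheses of Theorem \ref{tm:finite-var}.

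It then remains to record that the remaining hypotheses of Theorem \ref{tm:finite-var} hold: $a = \int_{|z| \leq 1} |z| \lambda(\dd z) < \infty$ and $b = 0$ are assumed verbatim. When $\gamma \geq 1$, $m_\gamma(\lambda) < \infty$ forces $m_1(\lambda) < \infty$, hence $\E |L(1)| < \infty$ and $\E |X(1)| < \infty$, so the centering $\E X^*(t)$ is well defined; when $\gamma = 2$, $m_2(\lambda) < \infty$ gives $\E L(1)^2 < \infty$ and $\Var(X^*(1)) < \infty$. Applying Theorem \ref{tm:finite-var} then delivers both assertions: the a.s.\ convergence $(X^*(t) - \bone(\gamma \geq 1) \E X^*(t))/t^{1/\gamma} \to 0$ for $\gamma < 2$, and the stated law of the iterated logarithm for $\gamma = 2$. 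I do not expect a substantive obstacle here — the argument is a translation of notation — the one step deserving a line of justification is the equivalence of \eqref{eq:main-tm} with the moment condition $m_\gamma(\lambda) < \infty$, which, as indicated above, rests only on the fact that L\'evy measures are finite away from the origin.
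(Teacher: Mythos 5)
Your proposal is correct and matches the paper's (implicit) argument: the corollary is obtained precisely by specializing Theorem \ref{tm:finite-var} to a one-point $V$, where $f_1 \equiv I_f$, Assumption \ref{assum-1} becomes Assumption \ref{assum-1-ma}, and \eqref{eq:main-tm} reduces to a moment condition on $\lambda$. One tiny imprecision: for $\gamma < 1$ the condition $m_\gamma(\lambda) < \infty$ is not \emph{equivalent} to $\int_{|z|>1}|z|^\gamma\lambda(\dd z) < \infty$ (it additionally constrains $\lambda$ near zero), but only the implication $m_\gamma(\lambda) < \infty \Rightarrow (C_\gamma)$ is needed, so the argument stands.
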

Since $I_f$ is a finite constant, $\alpha_0 = \infty$. Thus, the second result then simplifies, as $\beta > 1 + \alpha$ can no longer occur. 
\begin{cor}
	Assume $\int_{|z| \leq 1} |z| \lambda (dz) = \infty$, $a = b = 0$ and
	let $\gamma \in (0,2]$. Suppose that Assumption \ref{assum-1-ma} holds when $\gamma > 1$. Furthermore, suppose that Assumption \ref{assum-2-ma} holds. If $m_{\gamma}(\lambda ) < \infty$, then
	\begin{equation*}
		\limsup_{t \to \infty} \frac{|X^*(t) - \bone(\gamma \geq 1) \E X^*(t)|}
		{t^{1/\gamma} \log t} \leq 1 \quad \text{a.s.}
	\end{equation*}
\end{cor}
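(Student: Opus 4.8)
The plan is to obtain the statement as a direct specialization of Theorem~\ref{tm:infinite-variation}(i) to a one-point base space $V$. First I would make the reduction explicit: with $V = \{\ast\}$ and $\pi$ the unit point mass, the L\'evy basis $\Lambda$ on $\{\ast\}\times\R$ is generated by the increments of a two-sided L\'evy process $L$ with L\'evy measure $\lambda$ and no drift or Gaussian part (since $a=b=0$), $X$ is the moving average $X(t) = \int_\R f(t-s)\,L(\dd s)$, and the functions in \eqref{eq:f1f2} collapse to $f_1 \equiv I_f \in (0,\infty)$ and $f_2(x,u) = f_2(u)$ (the degenerate case $I_f = 0$ is trivial). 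Since the hypotheses of Lemma~\ref{lemma:fubini} are in force and \eqref{eq:fubini-cond-2} is trivial here, $X^*$ has the decomposition \eqref{eq:main-decomp}, and the standing assumptions $\int_{|z|\leq 1}|z|\,\lambda(\dd z) = \infty$, $a = b = 0$ are precisely those of Theorem~\ref{tm:infinite-variation}.

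Next I would pin down which case of that theorem applies. Because $f_1 \equiv I_f$ is a positive finite constant, $\int_V f_1(x)^{1+\alpha'}\bone(f_1(x)>1)\,\pi(\dd x) = I_f^{1+\alpha'}\bone(I_f>1) < \infty$ for every $\alpha' \geq 0$, so $\alpha_0 = \infty$ and $\alpha$ may be taken as large as we wish, in particular $\alpha \geq 1$; since also $\beta \leq 2$ always (take $\beta = \beta_0 = 2$ when $\beta_0 = 2$, using $\int_{|z|\leq 1}|z|^2\lambda(\dd z) < \infty$, and $\beta < 2$ otherwise), we have $\beta \leq 1+\alpha$. Hence it is Theorem~\ref{tm:infinite-variation}(i) that governs the situation, and the range $\gamma \in (0,2]$ matches its requirement $\gamma \leq 2$.

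It then remains to translate the hypotheses. In the one-point setting \eqref{eq:main-tm} reads $I_f^\gamma \int_{\{|z|I_f>1\}}|z|^\gamma\,\lambda(\dd z) < \infty$: splitting the range at $|z|=1$ (when $I_f \leq 1$), the contribution of $\{1/I_f < |z| \leq 1\}$ is finite because $|z|^\gamma$ is bounded there and a L\'evy measure has finite mass on any set bounded away from $0$, while the contribution of $\{|z|>1\}$ is at most $m_\gamma(\lambda) < \infty$; when $I_f > 1$ the range is contained in $\{|z|>1\}$ and the same bound applies. Thus $m_\gamma(\lambda) < \infty$ implies \eqref{eq:main-tm}. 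Finally, with $f_1 \equiv I_f$, Assumptions~\ref{assum-1} and \ref{assum-2} coincide with Assumptions~\ref{assum-1-ma} and \ref{assum-2-ma} after absorbing the constant factor $I_f$ into the constants $K$ and $c_p$; so Assumption~\ref{assum-1-ma} (needed only for $\gamma > 1$) and Assumption~\ref{assum-2-ma} supply the remaining hypotheses, and Theorem~\ref{tm:infinite-variation}(i) delivers the conclusion.

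Since this is a routine specialization, there is no genuine obstacle; the only points requiring a little care are the bookkeeping of the multiplicative constants ($I_f$, $K$, $c_p$) when matching Assumptions~\ref{assum-1-ma}, \ref{assum-2-ma} to \ref{assum-1}, \ref{assum-2}, and the elementary observation — used to pass from $m_\gamma(\lambda) < \infty$ to \eqref{eq:main-tm} — that a L\'evy measure is finite away from the origin, which disposes of the region $\{1/I_f < |z| \leq 1\}$.
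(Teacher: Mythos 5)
Your proof is correct and is exactly the specialization the paper intends: the corollary is stated without a separate proof, the text preceding it merely noting that $I_f$ constant forces $\alpha_0=\infty$ so that only case (i) of Theorem~\ref{tm:infinite-variation} can occur, and your verification of $(C_\gamma)$ from $m_\gamma(\lambda)<\infty$ and the matching of Assumptions~\ref{assum-1-ma}, \ref{assum-2-ma} with \ref{assum-1}, \ref{assum-2} fills in precisely those routine details. The only blemish is that you have the two cases swapped when discussing the set $\{1/I_f<|z|\leq 1\}$ (it is nonempty precisely when $I_f>1$, not when $I_f\leq 1$), but the bound you give is valid either way.
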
 

The Gaussian case is trivial since $\int_V f_1(x)^2 \pi (\dd x) < \infty$ obviously holds.
\begin{cor}
	Assume that $a = 0$, $b > 0$, and $\lambda  \equiv 0$. Then 
	\begin{equation*}
		\limsup_{t \to \infty} 
		\frac{|X^*(t)|}{\sqrt{2 b t I_f^2 \log \log t }} = 1 \quad \text{a.s.}
	\end{equation*}
\end{cor}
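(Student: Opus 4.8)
The plan is to obtain the statement as an immediate specialisation of Theorem~\ref{tm-gaussian}. When $V$ is a one-point space, the generating quadruple $(a,b,\lambda,\pi)$ has $\pi$ equal to (a normalisation of) the point mass on $V$, the variable $x$ in \eqref{eq:mma:def} is inert, and $f_1\equiv I_f$ is a finite constant. Hence $\int_V f_1^2\,\dd\pi=I_f^2<\infty$, so the single hypothesis of Theorem~\ref{tm-gaussian} holds automatically --- this is precisely the remark preceding the statement --- and the conclusion of that theorem reads
\[
\limsup_{t\to\infty}\frac{|X^*(t)|}{\sqrt{2b\,I_f^2\,t\log\log t}}=1\quad\text{a.s.},
\]
which is the assertion. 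Note that, unlike in the previous two corollaries, neither Assumption~\ref{assum-1-ma} nor Assumption~\ref{assum-2-ma} is invoked, because Theorem~\ref{tm-gaussian} requires only the square-integrability of $f_1$ against $\pi$.

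For transparency I would also record the direct argument, which is just the proof of Theorem~\ref{tm-gaussian} read on a one-point base. Since $a=0$, $b>0$ and $\lambda\equiv0$, the driving L\'evy process is Brownian, $X^*$ is a centred Gaussian process with $X^*(0)=0$ and stationary increments, and it remains to pin down its incremental variance $\sigma^2(t):=\E[X^*(t)^2]$. Starting from the decomposition \eqref{eq:main-decomp} with $f_1\equiv I_f$, using that $L$ has independent increments and applying the Wiener isometry, one finds
\[
\sigma^2(t)=b\left(\int_0^\infty\bigl(f_2(v)-f_2(t+v)\bigr)^2\,\dd v+\int_0^t\bigl(I_f-f_2(v)\bigr)^2\,\dd v\right).
\]
Since $f_2$ is non-increasing, $f_2(0)=I_f$ and $f_2(v)\to0$ as $v\to\infty$, the first integral is at most $I_f\int_0^\infty\bigl(f_2(v)-f_2(t+v)\bigr)\,\dd v=I_f\int_0^t f_2(v)\,\dd v=o(t)$ by the Ces\`aro mean theorem, while $\bigl(I_f-f_2(v)\bigr)^2\to I_f^2$ yields $\int_0^t\bigl(I_f-f_2(v)\bigr)^2\,\dd v=I_f^2t+o(t)$, again by Ces\`aro; hence $\sigma^2(t)\sim b\,I_f^2\,t$.

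Feeding this into Orey's law of the iterated logarithm for Gaussian processes with stationary increments \cite[Theorem~1.1]{orey1972} (whose hypotheses hold because $\sigma^2$ is continuous, increasing and regularly varying of index $1$ at infinity) gives $\limsup_{t\to\infty}|X^*(t)|/\sqrt{2\sigma^2(t)\log\log t}=1$ a.s., and replacing $\sigma^2(t)$ by its asymptotic equivalent $b\,I_f^2\,t$ completes the argument. I do not expect any real obstacle: the only mildly non-trivial point is the variance asymptotics $\sigma^2(t)\sim b\,I_f^2\,t$, and even that is already contained in the proof of Theorem~\ref{tm-gaussian}, so in the write-up one can simply cite that theorem.
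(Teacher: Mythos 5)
Your proposal is correct and follows the paper's own route: the corollary is exactly Theorem~\ref{tm-gaussian} specialised to a one-point $V$, where $\int_V f_1^2\,\dd\pi=I_f^2<\infty$ holds trivially, which is all the paper says. Your supplementary direct verification of $\Var(X^*(t))\sim b\,I_f^2\,t$ is a sound (and accurate) recap of the proof of that theorem, but is not needed beyond the citation.
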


These statements hold for the non-mixed versions of the preceding examples. Namely, for the OU process, where
\begin{equation*}
	f(v) = e^{-\nu  v} \bone(v \geq 0), \quad \nu  > 0,
\end{equation*}
and for the fractional OU process, where
\begin{equation*}
	f(v) = \frac{1}{\Gamma(\kappa)} (\nu  v)^{\kappa - 1} e^{-\nu  v}, \quad \nu  > 0, \, \kappa > 1,
\end{equation*}
The latter kernel  is often referred to as the \textit{gamma kernel}; see \cite{ barndorff2011bss, barndorff2013bss-limit, barndorff2009bss-volatility, basse2009semimart}.
Furthermore, for the kernels of the form
\begin{equation*}
	f(v) = \bone(0 < v < q), \quad q > 0,
\end{equation*}
and 
\begin{equation*}
	f(v) = (1 - v/q) \bone(0 < v < q), \quad q > 0
\end{equation*}
it is possible to show that Assumptions \ref{assum-1-ma} and \ref{assum-2-ma} hold.

\section{Proofs} \label{sec-proofs}

We use the following conventions and notations throughout the proofs.
We denote by $(\xi_k, \tau_k, \zeta_k)_{k \geq 0}$ the points of the Poisson random measure $\mu$.
Non-specified limit operations are meant as $n \to \infty$ or $t \to \infty$. 
Constants $c > 0$ are finite positive constants whose value does not depend on relevant quantities,
and can be different at each appearance.

\subsection{Proof of Lemma \ref{lemma:fubini}}

\begin{proof}[Proof of Lemma \ref{lemma:fubini}]
	Assume first that $\E |X(1)| < \infty$. We can and do further assume that $\Lambda$ is centered. Indeed, the 
	general case follows from the centered case combined with the 
	deterministic version of Fubini's theorem, as $f \in L^1(\pi \times \Leb)$.
	
	By Theorem 3.1 and Remark 3.2 in \cite{barndorff2011qou},
	it suffices to show that for any $u \in [0, t]$, the map 
	$(x, s) \mapsto f(x, u -s)$ belongs to Musielak--Orlicz space $L^{\phi}$ (see \cite{rajput1989} for details), i.e.
	\begin{equation} \label{eq:fubini1}
		\begin{split}
			I_1 = & \int_V \int_{\R}  \Big[
			b f(x, u-s)^2 \\ 
			&+ \int_{\R} \left(
			z^2 f(x, u-s)^2 \wedge 
			|z| f(x, u-s) 
			\right) \lambda(\dd z)
			\Big] \dd s \pi(\dd x)  < \infty,
		\end{split}
	\end{equation}
	and that
	\begin{equation} \label{eq:fubini2}
		\begin{split}
			I_2 =& \int_0^t
			\int_V \int_{\R}  \Big[
			b f(x, u-s)^2  \\ 
			&+ \int_{\R}  \left(
			z^2 f(x, u-s)^2 \wedge 
			|z| f(x, u-s) 
			\right) \lambda(\dd z) 
			\Big] \dd s \pi(\dd x) 
			\dd u < \infty.
		\end{split}
	\end{equation}
	By changing variables, we get $I_2 = t I_1$. Thus, \eqref{eq:fubini1} directly implies \eqref{eq:fubini2}. Since we assume $f$ is $\Lambda$-integrable, conditions in \eqref{eq:RR-cond} hold. Moreover, following the expression for the L\'evy measure of integrals with respect to $\Lambda$ from \cite[Proposition 35]{barndorff2018book}, we have that $\E |X(1)|<\infty$ if and only if
	\begin{equation*}\label{eq:X-mom-cond}
		\int_V \int_\R \int_\R |z| f(x,s) \bone(|z| f(x,s) > 1) \lambda(\dd z) \dd s \pi(\dd x)< \infty.
	\end{equation*}
	Therefore, $I_1 < \infty$.
	
	Suppose now that $\E |X(1)| = \infty$. We use the L\'evy--It\^o decomposition \eqref{eq:levy-ito}
	\begin{equation*}
		\Lambda(A) = \Lambda_1(A) + \Lambda_2(A), \quad A \in \mathcal{B}_0(V \times \R),
	\end{equation*}
	where $\Lambda_1$ and $\Lambda_2$ are independent random measures such that 
	$\Lambda_1$ has a characteristic quadruple $(a, b, \lambda_1, \pi)$, with 
	$\lambda_1(\dd z) =  \bone(|z| \leq 1) \lambda(\dd z)$ and $\Lambda_2$ has characteristic quadruple $(0, 0, \lambda_2, \pi)$, with 
	$\lambda_2(\dd z) = \bone(|z| > 1) \lambda(\dd z)$. 
	As in the previous case, we can assume that $\Lambda_1$ is centered. Using \cite[Proposition 3.3.9]{samorodnitsky2016}, we have that almost surely
	\begin{equation} \label{eq:Fubini-decomp}
		\begin{split}
			X(u) &= 
			\int_{V \times \R} f(x, u-s) \Lambda_1(\dd x, \dd s) +
			\int_{V \times \R}  f(x, u-s) \Lambda_2(\dd x, \dd s) \\ 
			& \eqqcolon X_1(u) + X_2(u) .	  
		\end{split}
	\end{equation}
	By the same argument as for $X$, $\E |X_1(1)|<\infty$ if and only if \eqref{eq:fubini-cond-1} holds. Hence, the previous case applies to $X_1$.
	
	For $X_2$ we have by the deterministic Fubini's theorem that a.s.
	\[
	\int_0^t X_2(u) \dd u = \int_{V \times \R \times (\R \backslash [-1,1])} 
	\int_0^t z f(x,u-s) \dd u \mu(\dd x, \dd s, \dd z),
	\]
	whenever the latter integral exists a.s. The necessary and sufficient 
	condition for this is 
	\begin{equation}\label{eq:fubini-cond-extra2}
		\int_V \int_\R \int_{|z|>1}
		\left( 1  \wedge |z| \int_0^t f(x,u-s) \dd u \right) \lambda(\dd z) \dd s \pi(\dd x) < \infty.
	\end{equation}
	It remains to show that the assumptions imply \eqref{eq:fubini-cond-extra2}. For $s<0$ we have by \eqref{eq:fubini-cond-3} that
	\begin{equation*}
		\begin{split}
			&\int_V \int_{-\infty}^0 \int_{|z|>1} 
			\left( 1  \wedge |z| \int_0^t f(x,u-s) \dd u \right) \lambda(\dd z) \dd s \pi(\dd x)\\
			&\leq \int_V \int_{-\infty}^0 \int_{|z|>1}
			\left( 1  \wedge |z| t g(x,-s) \right) \lambda(\dd z) \dd s \pi(\dd x) < \infty,
		\end{split}
	\end{equation*}
	and when $s\in (0,t)$ from \eqref{eq:fubini-cond-2} we obtain
	\begin{equation*}
		\begin{split}
			&\int_V \int_0^t \int_{|z|>1}
			\left( 1  \wedge |z| \int_0^t f(x,u-s) \dd u \right) \lambda(\dd z) \dd s \pi(\dd x)\\
			&\leq t \int_V \int_{|z| > 1}
			\left( 1  \wedge |z| f_1(x)  \right) \bone(f_1(x) \leq 1)
			\lambda(\dd z) \pi(\dd x)  \\ 
			& \quad + 
			t \int_V \int_{|z| > 1}
			\left( 1  \wedge |z| f_1(x) \right) \bone(f_1(x) > 1) 
			\lambda(\dd z) \pi(\dd x).
		\end{split}
	\end{equation*}
	The first integral is finite by \eqref{eq:fubini-cond-2}, while for the second we have
	\begin{align*}
		&\int_V \int_{|z| > 1}
		\left( 1  \wedge |z| f_1(x) \right) 
		\lambda(\dd z) \pi(\dd x) \bone(f_1(x) > 1)\\ 
		&\leq \int_V \int_{|z| > 1}
		f_1(x) f_1(x)^{-1} \bone(f_1(x) > 1)
		\lambda(\dd z) \pi(\dd x) \\ 
		& \leq \lambda(\{z \colon |z| > 1\})
		\int_V f_1(x) \pi(\dd x) < \infty.
	\end{align*}
	Hence, \eqref{eq:fubini-cond-extra2} holds.
\end{proof}

The conditions \eqref{eq:fubini-cond-1}–\eqref{eq:fubini-cond-3} of Lemma \ref{lemma:fubini} may seem complicated at first, but they are implied by many simpler conditions. We provide below several sufficient conditions that can be readily checked.

\begin{lemma} \label{lemma:sufficient-fubini}
\begin{enumerate}[label=(\roman*)]
	\item If $f$ is bounded, then \eqref{eq:fubini-cond-1} holds.
	\item If $\pi$ is a finite measure, then \eqref{eq:fubini-cond-2} holds.
	\item If $f$ is non-increasing, then \eqref{eq:fubini-cond-3} holds with $g=f$.
\end{enumerate}
\end{lemma}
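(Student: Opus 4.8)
The plan is to handle the three parts independently, each reducing to a single elementary estimate together with the standing integrability properties of $f$ (in particular $f\in L^1(\pi\times\Leb)$, so $f_1\in L^1(\pi)$).

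For (i) I would use boundedness to eliminate the indicator. Let $M\coloneqq\sup_{x,v}f(x,v)<\infty$; if $M\leq 1$ the integrand in \eqref{eq:fubini-cond-1} vanishes identically, so assume $M>1$. Whenever $\bone(|z|f(x,s)>1)\neq 0$ we must have $|z|>1/M$, so together with the constraint $|z|\leq 1$ in \eqref{eq:fubini-cond-1} the inner integral runs only over the annulus $\{1/M<|z|\leq 1\}$, which carries finite $\lambda$-mass because $\lambda$ is a L\'evy measure. On that annulus $|z|\leq 1$ gives $|z|f(x,s)\bone(|z|f(x,s)>1)\leq f(x,s)$, so the left-hand side of \eqref{eq:fubini-cond-1} is at most $\lambda(\{1/M<|z|\leq 1\})\,\|f\|_{L^1(\pi\times\Leb)}<\infty$.

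For (ii) the estimate is even cruder: $1\wedge|z|f_1(x)\leq 1$, so the left-hand side of \eqref{eq:fubini-cond-2} is bounded by $\int_V\int_{|z|>1}\lambda(\dd z)\,\pi(\dd x)=\lambda(\{|z|>1\})\,\pi(V)$, which is finite because a L\'evy measure assigns finite mass to $\{|z|>1\}$ and $\pi$ is assumed finite.

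For (iii) the key point is a monotonicity bound. For fixed $x$ the map $v\mapsto f(x,v)$ is non-increasing on $[0,\infty)$ and vanishes on $(-\infty,0)$, so for $s\leq 0$ and $u\in[0,t]$ one has $u-s=u+|s|\geq|s|=-s\geq 0$ and hence $f(x,u-s)\leq f(x,-s)$; integrating over $u$ gives $\int_0^t f(x,u-s)\,\dd u\leq t\,f(x,-s)$, which is exactly the first inequality of \eqref{eq:fubini-cond-3} with $g=f$. With this choice the second requirement of \eqref{eq:fubini-cond-3} collapses to the $g$-free condition $\int_V\int_0^\infty\int_{|z|>1}(1\wedge|z|f(x,s))\,\lambda(\dd z)\,\dd s\,\pi(\dd x)<\infty$. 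This last step is the one I expect to be the main obstacle: it does not follow from monotonicity alone but reflects an interplay between the rate of decay of $f(x,\cdot)$ at infinity and the tail of $\lambda$ (for the supOU kernel it amounts to $\int_{|z|>1}\log|z|\,\lambda(\dd z)<\infty$, which is precisely that process's existence condition, while for compactly supported kernels such as the trawl kernel it is immediate). In the write-up I would therefore either record this integrability of $f$ as an explicit hypothesis of part (iii) or verify it directly for each example of Section~\ref{sec-examples}.
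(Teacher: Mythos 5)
Your arguments for (i) and (ii) are correct and coincide with the paper's own proof: for (i) the paper likewise observes that the indicator forces $|z|\ge 1/K$ with $K=\sup f$, so the $z$-integral is over a set of finite $\lambda$-mass and the $(x,s)$-integral is $\|f\|_{L^1(\pi\times\Leb)}$; for (ii) the bound $1\wedge|z|f_1(x)\le 1$ is exactly what the paper means by ``straightforward''. The first inequality of \eqref{eq:fubini-cond-3} in part (iii) is also derived exactly as in the paper, from $f(x,u-s)\le f(x,-s)$ for $u\ge 0\ge s$.

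Where you diverge is the second condition of \eqref{eq:fubini-cond-3}, which you decline to prove and propose to add as a hypothesis. The paper closes this step in one clause, asserting that it ``follows from integrability conditions \eqref{eq:RR-cond}''. Part of that assertion is easy to make precise, and you should record it: splitting on $\{|z|f(x,s)>1\}$ versus $\{|z|f(x,s)\le 1\}$, on the first set $1=1\wedge z^2f(x,s)^2$, so that contribution is bounded by $\int_V\int_\R V_0(f(x,s))\,\dd s\,\pi(\dd x)<\infty$, the third condition in \eqref{eq:RR-cond}. The remaining piece, $\int_V\int_0^{\infty}\int_{|z|>1}|z|f(x,s)\,\bone(|z|f(x,s)\le 1)\,\lambda(\dd z)\,\dd s\,\pi(\dd x)$, is where your skepticism has real content rather than being a defect of your write-up: the member of \eqref{eq:RR-cond} that would have to control it is the $U$-condition, and $U$ in \eqref{eq:UV0} contains the \emph{signed} integral $y\int_{1<|z|\le 1/y}z\,\lambda(\dd z)$, which vanishes identically for symmetric $\lambda$. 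Concretely, taking $\lambda(\dd z)=|z|^{-2}\bone(|z|>1)\,\dd z$ and a non-increasing kernel with $f(v)\asymp v^{-1}(\log v)^{-2}$, one has $U\equiv 0$ and $V_0(y)\asymp y$, so \eqref{eq:RR-cond} holds, yet the piece above is of order $\int f\log(1/f)\,\dd v=\infty$. So monotonicity together with \eqref{eq:RR-cond} does not yield the second condition of \eqref{eq:fubini-cond-3} in general, and your proposed remedy (state it as an explicit hypothesis, or verify it per example, as is in effect done for the supOU and trawl kernels where the $\{|z|f\le 1\}$ contribution is harmless) is the sound way to repair part (iii).
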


\begin{proof}
	For (i), let $\sup_{x,s} f(x,s) = K < \infty$. Then, since $f \in L^1(\pi \times \Leb)$,
    \begin{align*}
        & \int_V \int_\R \int_{|z|\leq 1} |z| f(x,s) \bone(|z| f(x,s) > 1)
		\lambda(\dd z) \dd s \pi(\dd x)   \\
		& \leq 
		\int_V \int_\R  f(x,s)  \dd s \pi(\dd x)
		\int_{|z|\leq 1} |z | \bone(|z| \geq  1 / K)  \lambda(\dd z) < \infty.
    \end{align*}
	From \eqref{eq:fubini-cond-2}, (ii) is straightforward. If $f$ is non-increasing, then we have $\int_0^t f(x,u-s) \dd u \leq t f(x,-s)$ and the second condition in \eqref{eq:fubini-cond-3} follows from integrability conditions \eqref{eq:RR-cond}, which proves (iii).
\end{proof}

\subsection{Proof of Theorem \ref{tm:finite-var}}

The claim in the case $\gamma = 1$ follows from the mixing property of the MMA process \cite[Theorem 3.5]{fuchs2013mixing}, which implies (see e.g.~\cite[Corollary 25.9]{kallenberg2021foundations}) 
\begin{equation*}
	t^{-1} X^*(t) \to \E X(1) \quad \text{a.s.}
\end{equation*}
Therefore, in the remainder of this section we consider $\gamma \neq 1$.

Recall that under our assumption $\Lambda$ has the form \eqref{eq:Lambda-form1}.
Assume first that  $\lambda ((- \infty, 0)) = 0$. Thus, \eqref{eq:main-decomp} reads as
\begin{equation} \label{eq:mma-decomposition}
	\begin{split}
		X^*(t) =&  \;  
		\int_{V \times (-\infty, 0] \times (0, \infty)}
		z (f_{2}(x, -s) - f_2(x, t-s))  \mu(\dd x, \dd s, \dd z)  \\ 
		&+ \int_{V \times (0, t] \times (0, \infty)}
		z f_{1}(x)  \mu(\dd x, \dd s, \dd z) \\ 
		&-  \int_{V \times (0, t] \times (0, \infty)}
		z f_{2}(x, t-s)  \mu(\dd x, \dd s, \dd z) \\		
		\eqqcolon \; & X_{-}^*(t) + X_{+, 1}^*(t) - X_{+,2}^*(t).
	\end{split}	
\end{equation}
We start with $X_{-}^*(t)$. The next lemma shows that this part is negligible in the limit under the assumptions of Theorem \ref{tm:finite-var}.

\begin{lemma} \label{lemma:X-}
	Let $\gamma \in (0, 2]$, $\gamma \neq 1$. If $\gamma \in (1, 2]$, suppose that $f_1 \in L^\gamma(\pi )$, $m_1(\lambda ) < \infty$ and that Assumption \ref{assum-1} holds. Then 
	\begin{equation*}
		\lim_{t \to \infty} \frac{X^*_-(t)}{t^{1/\gamma}} = 0 \quad \text{a.s.}
	\end{equation*}		
\end{lemma}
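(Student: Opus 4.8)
The plan rests on the monotonicity of $X^*_-$. For $s\le 0$ the kernel $f_2(x,-s)-f_2(x,t-s)=\int_{-s}^{t-s}f(x,v)\,\dd v$ is non-negative and non-decreasing in $t$, and every Poisson atom entering $X^*_-$ in \eqref{eq:mma-decomposition} carries a positive mark $\zeta_k$ since $\lambda((-\infty,0))=0$; hence $t\mapsto X^*_-(t)$ is non-decreasing. It therefore suffices to prove $X^*_-(\rho^n)/\rho^{n/\gamma}\to 0$ a.s.\ for one fixed $\rho>1$: for $t\in[\rho^n,\rho^{n+1}]$ one has $X^*_-(t)/t^{1/\gamma}\le\rho^{1/\gamma}X^*_-(\rho^{n+1})/\rho^{(n+1)/\gamma}$, and letting the threshold $\varepsilon$ tend to $0$ along a countable set removes the factor $\rho^{1/\gamma}$. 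Everything then reduces to a Borel--Cantelli estimate along $t_n=\rho^n$, which I would carry out after splitting the Poisson integral defining $X^*_-$ into its small jumps $\{\zeta_k\le1\}$ and large jumps $\{\zeta_k>1\}$, writing $X^*_-=X^*_{-,\le}+X^*_{-,>}$.

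For the small-jump part, $a=\int_{|z|\le1}|z|\lambda(\dd z)<\infty$ makes $X^*_{-,\le}(t)$ integrable, with $\E X^*_{-,\le}(t)=a\int_V\int_0^t f_2(x,u)\,\dd u\,\pi(\dd x)$ (using $\int_0^\infty(f_2(x,u)-f_2(x,u+t))\,\dd u=\int_0^t f_2(x,u)\,\dd u$). When $\gamma<1$ the crude bound $\int_0^t f_2(x,u)\,\dd u\le t f_1(x)$ together with $f_1\in L^1(\pi)$ gives $\E X^*_{-,\le}(t)\le ct=o(t^{1/\gamma})$, and Markov's inequality with Borel--Cantelli along $t_n$ finishes this part. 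When $\gamma>1$ one instead uses Assumption \ref{assum-1} in the form \eqref{eq:f2-inv-derived}, giving $\int_0^t f_2(x,u)\,\dd u\le c\,f_1(x)^2\bigl(1+\log_+(t/f_1(x))\bigr)$; splitting the $x$-integral over $\{f_1\le1\}$, $\{1<f_1\le t\}$, $\{f_1>t\}$ and invoking $f_1\in L^1(\pi)\cap L^\gamma(\pi)$ yields $\E X^*_{-,\le}(t)=O(t^{2-\gamma}\log t)=o(t^{1/\gamma})$ (since $2-\gamma<1/\gamma$ for $\gamma\ne1$), while a $p$-th moment inequality for compensated Poisson integrals ($p\in\{1,\gamma\}$, after truncating the integrand at level one) produces a summable bound for $\sum_n\P\bigl(|X^*_{-,\le}(t_n)-\E X^*_{-,\le}(t_n)|>\varepsilon t_n^{1/\gamma}\bigr)$. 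For the large-jump part when $\gamma<1$ nothing further is needed: the Fubini hypothesis \eqref{eq:fubini-cond-3} supplies $g$ with $f_2(x,-s)-f_2(x,t-s)\le tg(x,-s)$ and $\int_V\int_0^\infty\int_{|z|>1}(1\wedge|z|g(x,s))\,\lambda(\dd z)\,\dd s\,\pi(\dd x)<\infty$, so $X^*_{-,>}(t)\le t\,Z$ with $Z=\int_{V\times(-\infty,0]\times(1,\infty)}z\,g(x,-s)\,\mu<\infty$ a.s., whence $X^*_{-,>}(t)/t^{1/\gamma}\le Z\,t^{1-1/\gamma}\to0$.

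The delicate case is $X^*_{-,>}$ for $\gamma\in(1,2]$, where the bound $X^*_{-,>}(t)\le tZ$ is useless; here I would split the atoms into ``old'' ($-\tau_k>t$) and ``recent'' ($-\tau_k\le t$). For the old atoms the deterministic identity $\int_{-\infty}^{-t}\bigl(f_2(x,-s)-f_2(x,t-s)\bigr)\,\dd s=\int_t^{2t}(v-t)f(x,v)\,\dd v+t f_2(x,2t)\le t f_2(x,t)$, combined with $m_1(\lambda)<\infty$, bounds the mean of their contribution by $m_1(\lambda)\,t\int_V f_2(x,t)\,\pi(\dd x)$; since $0\le f_2(x,t)\le f_1(x)\in L^1(\pi)$ and, via \eqref{eq:f2-inv-derived} and $f_1\in L^\gamma(\pi)$, $\int_V f_2(x,t)\,\pi(\dd x)\le c\,t^{1-\gamma}$, this mean is $O(t^{2-\gamma})=o(t^{1/\gamma})$ with a summable Borel--Cantelli bound along $t_n$. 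For the recent atoms I would decompose $-\tau_k\in(0,t]$ dyadically (relative to $f_1(\xi_k)$), bound each mark by $\zeta_k f_2(\xi_k,-\tau_k)$ and use \eqref{eq:f2-inv-derived} to extract a factor $f_1(\xi_k)^2/(-\tau_k)$ on the far blocks, then control the resulting $O(\log t_n)$ Poisson integrals through condition \eqref{eq:main-tm} (in force under the hypotheses of Theorem \ref{tm:finite-var}) after truncating $\zeta_k f_1(\xi_k)$ at a suitable power of $t$, closing the estimate with a union bound over the dyadic scales. I expect this recent-large-jump step to be the main obstacle: a single atom with $\zeta_k f_1(\xi_k)$ large can already overshoot $t^{1/\gamma}$, the distant-past atoms are infinitely many, and the natural moment bounds only barely beat the target rate, so the argument must balance the truncation level, the dyadic bookkeeping, and the precise interplay between the behaviour of $\lambda$, $\pi$ and $f_1$ near $0$ and near $\infty$.
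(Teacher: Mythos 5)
There is a genuine gap, and it sits exactly where you flag it yourself: the contribution of the recent large jumps for $\gamma\in(1,2]$ is never actually controlled. You propose an old/recent split, a dyadic decomposition relative to $f_1(\xi_k)$, a truncation of $\zeta_k f_1(\xi_k)$ at a power of $t$, and a union bound over scales, but none of these estimates is carried out, and you explicitly concede that this step is ``the main obstacle.'' As written, the proposal does not prove the lemma in the only case where something nontrivial has to be done.

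The irony is that the gap is unnecessary: the hypotheses $m_1(\lambda)<\infty$, $f_1\in L^\gamma(\pi)$ and Assumption \ref{assum-1} make the \emph{whole} of $X^*_-(t)$ — large jumps included — amenable to a one-line first-moment argument, which is what the paper does. Your own ``old atoms'' computation already contains the key ingredient; applying the same Fubini identity over all of $s\le 0$ gives
\begin{equation*}
\E X^*_-(t)=m_1(\lambda)\int_V\Bigl(\int_0^t v f(x,v)\,\dd v+t\int_t^\infty f(x,v)\,\dd v\Bigr)\pi(\dd x),
\end{equation*}
and splitting according to $t\ge KNf_1(x)$ versus $t<KNf_1(x)$ and using \eqref{eq:f2-inv-derived} (via integration by parts on $\int_0^t vf(x,v)\,\dd v$) yields $\E X^*_-(t)\le c\,t^{2-\gamma}\log t$, with the two regimes controlled by $\int_V f_1^\gamma\,\dd\pi<\infty$. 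Since $\gamma+1/\gamma>2$ for $\gamma\ne 1$, this is $o(t^{1/\gamma})$, and because $X^*_-\ge 0$ Markov's inequality gives $\P(X^*_-(t)>\varepsilon t^{1/\gamma})\le c\,t^{2-\gamma-1/\gamma}\log t$, summable along $t_n=n^a$ for $a$ large; Borel--Cantelli plus the monotonicity of $X^*_-$ (with $t_{n+1}/t_n\to1$) finishes. No small/large-jump split, no compensated moment inequality, and no dyadic bookkeeping is needed for this term. (Your $\gamma<1$ treatment and your monotonicity/subsequence framework are fine and essentially match the paper; the small-jump concentration step for $\gamma>1$ is also superfluous for the same reason — the variable is non-negative and its mean is already $o(t^{1/\gamma})$.)
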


\begin{proof}
	First, assume $\gamma < 1$. If $\E X(1) < \infty$, the claim follows from the fact that as $t \to \infty$
	\begin{equation*}
		t^{-1} X^*_-(t) \leq t^{-1} X^*(t) \to \E X(1) \quad \text{a.s.}
	\end{equation*}	
	Assume $\E X(1) = \infty$ and decompose 
	\[
	X_{-}(t) = \int_{V \times (-\infty, 0] \times (0, \infty)}
	zf(x, t-s)) \mu(\dd x, \dd s, \dd z) = X_{-,1}(t) + X_{-,2}(t)
	\]
	as in \eqref{eq:Fubini-decomp}. Since $\E X_{-,1}(1) <\infty$ by \eqref{eq:fubini-cond-1}, from the mixing property \cite[Theorem 3.5]{fuchs2013mixing} we get
	\begin{equation*}
		t^{-1} X^*_{-,1}(t) \to \E X_{-,1}(1) \quad \text{a.s.}
	\end{equation*}	
	For $X_{-,2}$ 
	\begin{align*}
		t^{-1} X^*_{-,2}(t) &= \int_{V \times (-\infty, 0] \times (1,\infty)}
		z t^{-1} \int_0^t f(x,u-s) \dd u \, \mu(\dd x, \dd s, \dd z)\\
		&\leq \int_{V \times (-\infty, 0] \times (1,\infty)}
		z g(x,-s) \mu(\dd x, \dd s, \dd z),
	\end{align*}
	which is finite by \eqref{eq:fubini-cond-3}. An application of the dominated convergence theorem yields the result. 	

	Now assume that $\gamma \in (1, 2]$. We have
	\begin{equation} \label{eq:X-mean}
		\begin{split}
			\E X^*_-(t) & = \int_{V \times (-\infty, 0] \times (0,\infty)}
			z(f_2(x, -s) - f_2(x, t-s)) \nu(\dd x, \dd s, \dd z) \\ 
			& = m_1(\lambda ) \int_V \left( 
			\int_0^\infty \int_u^{t+u} f(x, v) \dd v \dd u 
			\right) \pi (\dd x).
		\end{split}
	\end{equation}
	By Fubini's theorem, the inner double integral can be written as 
	\begin{equation*}
		\int_0^t v f(x, v) \dd v + t \int_t^\infty f(x, v) \dd v \eqqcolon I_1 + I_2.
	\end{equation*}
	Let $K, N > 0$ be the constants from Assumption \ref{assum-1}. For $t < K N f_1(x)$, we use the simple bound 
	\begin{equation} \label{eq:I1I2-simple}
		I_1 + I_2 \leq t f_1(x).
	\end{equation}
	On the other hand, when $t \geq  K N f_1(x)$, we use \eqref{eq:f2-inv-derived}. For $I_1$, we apply integration by parts to obtain
	\begin{align*}
		I_1 &= \int_0^t v f(x, v) \dd v = \int_0^{KN f_1(x)} v f(x, v) \dd v + 
		\int_{KN f_1(x)}^t v f(x, v) \dd v \\ 
		&\leq KN f_1(x) \int_0^{KN f_1(x)} f(x, v) \dd v \\
		& \quad + \left(
		\left. -v f_2(x, v) \right|_{v = KN f_1(x)}^t + 
		\int_{KN f_1(x)}^t f_2(x, v) \dd v 		
		\right) \\ 
		&\leq KN f_1(x)^2 + K f_1(x)^2 \left( 1 + \int_{KN f_1(x)}^t v^{-1} \dd v \right)\\ 
		&\leq  c f_1(x)^2 \log t.
	\end{align*}
	By \eqref{eq:f2-inv-derived} again $I_2 \leq K f_1(x)^2$, thus for $t \geq KN f_1(x)$
	\begin{equation} \label{eq:I1I2-tlarge}
		I_1 + I_2 \leq c f_1(x)^2 \log t.
	\end{equation}
	Substituting back \eqref{eq:I1I2-simple} and \eqref{eq:I1I2-tlarge} into 
	\eqref{eq:X-mean}, we get 
	\begin{align*}
		\E X^*_-(t) &\leq 
		c \log t  \int_V f_1(x)^2
		\bone(t \geq KN f_1(x)) \pi (\dd x) \\
		& \quad + 
		c t \int_V f_1(x) 
		\bone(t < KN f_1(x)) \pi (\dd x) \\ 
		&\leq c t^{2-\gamma} \log t \int_V f_1(x)^\gamma \pi (\dd x) + 
		c t^{2-\gamma} \int_V f_1(x)^\gamma \pi (\dd x) \\ 
		&\leq c t^{2- \gamma} \log t.
	\end{align*}
	Markov's inequality implies that for $\varepsilon > 0$
	\begin{equation*}
		\P(X^*_-(t)  > \varepsilon t^{1/\gamma}) \leq 
		\frac{\E X^*_-(t) }{\varepsilon t^{1/\gamma}} \leq
		c t^{2 - \gamma - 1/\gamma} \log t .
	\end{equation*}
	For $a > 0$ such that $2 - \gamma - 1/\gamma < -1/a$, set $t_n \coloneqq n^a$. The Borel--Cantelli lemma then implies
	\begin{equation*}
		\lim_{n \to \infty} t_n^{-1/\gamma} X^*_-(t_n) = 0 \quad \text{a.s.}
	\end{equation*}	
	The result follows by monotonicity, as for $t \in [t_n, t_{n+1}]$ 
	\begin{equation*}
		t^{-1/\gamma} X^*_-(t) \leq t_n^{-1/\gamma} X^*_-(t_{n+1})  = 
		t_{n+1}^{-1/\gamma} X^*_-(t_{n+1}) \left(\frac{t_{n+1}}{t_n}\right)^{1/\gamma} \to 0  \quad \text{a.s.}
	\end{equation*}	
\end{proof} 
Note that we only used \eqref{eq:f2-inv-derived}, that is Assumption \ref{assum-1} with $\varepsilon = 0$.	

We now turn to $X^*_{+, 1}(t)$, which is a subordinator. Its behavior constitutes the main component of the asymptotics in Theorem \ref{tm:finite-var} and it is directly related to condition \eqref{eq:main-tm}. 

\begin{lemma} \label{lemma:X+1}
	Assume that for some $\gamma \in (0,2)$ 
	\begin{equation} \label{eq:main-con-lm}
		\int_V \int_{(0,\infty)} 
		z^\gamma f_1(x)^\gamma \bone (zf_1(x) > 1) \lambda (\dd z) \pi (\dd x) < \infty.
	\end{equation}
	Then 
	\begin{equation*}
		\lim_{t \to \infty}
		\frac{X^*_{+, 1}(t) - \bone (\gamma \geq 1) t m_1(\lambda ) 
			\int_V f_1 \dd \pi}{t^{1/\gamma}} = 0 \quad \text{a.s.}
	\end{equation*}
	If \eqref{eq:main-con-lm} holds with $\gamma = 2$, then the law of iterated 
	logarithm holds, that is
	\begin{equation*} 
		\limsup_{t \to \infty}  
		\frac{| X^*_{+, 1}(t) - t m_1(\lambda ) 
			\int_V f_1 \dd \pi | }{\sqrt{2 t \log \log t}} = 
		\sqrt{ m_2(\lambda ) \int_V f_1^2 \dd \pi }
		\quad \text{a.s.}
	\end{equation*}
\end{lemma}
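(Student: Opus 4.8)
The plan is to recognize that $X^*_{+,1}(t) = \int_{V \times (0,t] \times (0,\infty)} z f_1(x)\, \mu(\dd x, \dd s, \dd z)$ is, as a process in $t$, a subordinator (a L\'evy process with nondecreasing paths), since the integrand is nonnegative and $\mu$ restricted to $V \times (0,t] \times (0,\infty)$ has intensity $\pi(\dd x)\,\dd s\,\lambda(\dd z)$. Its L\'evy measure $\rho$ on $(0,\infty)$ is the pushforward of $\pi \times \lambda$ under the map $(x,z) \mapsto z f_1(x)$, i.e.\ for a Borel set $B \subseteq (0,\infty)$, $\rho(B) = \int_V \int_{(0,\infty)} \bone(z f_1(x) \in B)\, \lambda(\dd z)\, \pi(\dd x)$. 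First I would check that $\rho$ is genuinely a L\'evy measure, i.e.\ $\int_{(0,\infty)} (1 \wedge y^2)\, \rho(\dd y) < \infty$; the part near infinity, $\int_{y>1} \rho(\dd y) = \int_V \int z f_1(x) > 1$ region count, is exactly \eqref{eq:main-gamma0}-type finiteness and follows from \eqref{eq:main-con-lm} (which, as noted in Subsection~\ref{subsec-growth}, implies the $\gamma=0$ version), while the part near zero is controlled because $f_1 \in L^1(\pi)$ and $\lambda$ is a L\'evy measure. The key translation is that
\begin{equation*}
\int_{y>1} y^\gamma\, \rho(\dd y) = \int_V \int_{(0,\infty)} z^\gamma f_1(x)^\gamma \bone(z f_1(x) > 1)\, \lambda(\dd z)\, \pi(\dd x),
\end{equation*}
so hypothesis \eqref{eq:main-con-lm} says precisely $\int_{y>1} y^\gamma\, \rho(\dd y) < \infty$, which is equivalent to $\E |L(1)|^\gamma < \infty$ for the subordinator $L = X^*_{+,1}$ by \cite{sato2013}.

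Given this, the two conclusions reduce to classical results for L\'evy processes. For $\gamma \in (0,2)$: the Marcinkiewicz--Zygmund SLLN for L\'evy processes (the continuous-time analogue of \eqref{eq:MZclassical}) states that if $\E|L(1)|^\gamma < \infty$ then $t^{-1/\gamma}(L(t) - \bone(\gamma \ge 1)\, t\, \E L(1)) \to 0$ a.s.; since $\E L(1) = \int_{(0,\infty)} y\, \rho(\dd y) = m_1(\lambda) \int_V f_1\, \dd\pi$ when $\gamma \ge 1$ (using that \eqref{eq:main-con-lm} with $\gamma \ge 1$ forces integrability of $y$ at infinity, and near-zero integrability is automatic for a subordinator here—actually one should double check that the drift/centering matches, recalling that under the standing assumption $a = \int_{|z|\le 1} |z|\,\lambda(\dd z)$ the small-jump compensator has been absorbed so $L$ has no drift and $\E L(1)$ is just the mean of the jumps), this gives the first claim. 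For $\gamma = 2$: $\E|L(1)|^2 < \infty$ implies $\Var(L(1)) = m_2(\lambda) \int_V f_1^2\, \dd\pi < \infty$, and Hartman--Wintner's law of the iterated logarithm for L\'evy processes (or for the embedded random walk $L(n)$ plus a standard maximal-inequality interpolation over $t \in [n, n+1]$) yields $\limsup_{t\to\infty} |L(t) - t\,\E L(1)| / \sqrt{2 t \log\log t} = \sqrt{\Var(L(1))}$ a.s.

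The concrete steps, in order: (1) identify $X^*_{+,1}$ as a subordinator and compute its L\'evy measure $\rho$ as the pushforward of $\pi \times \lambda$; (2) verify $\rho$ is a L\'evy measure and that $\int_{y>1} y^\gamma \rho(\dd y) < \infty \iff$ \eqref{eq:main-con-lm}, hence $\E|X^*_{+,1}(1)|^\gamma < \infty$; (3) compute $\E X^*_{+,1}(1) = m_1(\lambda)\int_V f_1\,\dd\pi$ and $\Var X^*_{+,1}(1) = m_2(\lambda)\int_V f_1^2\,\dd\pi$ (the latter by the standard variance formula for the squared $L^2$-norm of a compensated jump integral); (4) apply the Marcinkiewicz--Zygmund SLLN for L\'evy processes for $\gamma < 2$; (5) apply the Hartman--Wintner LIL for L\'evy processes for $\gamma = 2$. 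For (4) and (5), if a clean reference for the continuous-time versions is not at hand, I would instead sample at integer times to get a random walk, apply the classical discrete Marcinkiewicz--Zygmund theorem \cite{Marcinkiewicz1937} and the classical Hartman--Wintner LIL, and then bound the fluctuation $\sup_{n \le t \le n+1}|L(t) - L(n)|$ using monotonicity of the subordinator together with a Borel--Cantelli argument on the increments $L(n+1) - L(n)$, which are i.i.d.\ with finite $\gamma$-th moment. The main obstacle I anticipate is purely bookkeeping rather than conceptual: making sure the centering constant in the $\gamma \ge 1$ case is exactly $t\, m_1(\lambda)\int_V f_1\,\dd\pi$ and not shifted by a drift term—this requires carefully tracking the effect of the standing normalization $a = \int_{|z|\le 1}|z|\,\lambda(\dd z)$ and the representation \eqref{eq:Lambda-form1}, under which $\Lambda$ (and hence $X^*_{+,1}$) is an honest nonnegative jump integral with no compensator, so the mean is simply the intensity-weighted integral of the integrand.
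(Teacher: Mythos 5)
Your proposal is correct and follows essentially the same route as the paper: identify $X^*_{+,1}$ as a subordinator whose L\'evy measure is the pushforward of $\pi\times\lambda$ under $(x,z)\mapsto zf_1(x)$, translate \eqref{eq:main-con-lm} into a $\gamma$-moment condition via \cite[Theorem 25.3]{sato2013}, compute the mean and variance, and then invoke the discrete Marcinkiewicz--Zygmund SLLN with monotone interpolation for $\gamma<1$, a process-level Marcinkiewicz--Zygmund SLLN (the paper cites \cite[Theorem 2.1]{tiefeng1993}) for $\gamma\ge 1$, and the L\'evy-process LIL (\cite[Proposition 48.9]{sato2013}) for $\gamma=2$. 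Your bookkeeping concern about the centering is resolved exactly as you anticipate: under the normalization $a=\int_{|z|\le1}|z|\,\lambda(\dd z)$ the compensator cancels and $X^*_{+,1}$ is an uncompensated nonnegative jump integral, so its mean is $m_1(\lambda)\int_V f_1\,\dd\pi$.
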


\begin{proof}
	The process $\{X_{+, 1}^*(t)\}_{t \geq 0}$ is a subordinator with characteristic function
	\begin{equation*}
		\E e^{\ii u X^*_{+, 1}(t)} = \exp \left(t \int_{(0, \infty)} 
		\left(e^{\ii u y} - 1\right) \eta(\dd y) \right),
	\end{equation*} 
	where the L\'evy measure $\eta$ is given by
	\begin{equation*}
		\overline{\eta}(r) = \eta((r, \infty)) = \lambda  \times \pi  
		\left(\left\{(z, x): z f_1(x) >r \right\} \right)  
		= \int_V  \overline{\lambda }(r/f_1(x)) \pi (\dd x).
	\end{equation*}
	Condition \eqref{eq:main-con-lm} implies that
	\begin{equation} \label{eq:X+1-proof}
		\begin{split}
			& \gamma \int_1^\infty r^{\gamma - 1} \overline{\eta} (r) \dd r = 
			\gamma \int_1^\infty r^{\gamma - 1} \int_V 
			\overline{\lambda } (r/f_1(x)) \pi (\dd x) \dd r \\ 
			&= \gamma \int_V f_1(x)^\gamma \int_{1/f_1(x)}^\infty  
			z^{\gamma-1}  \overline{\lambda }(z) \dd z \pi (\dd x)  \\   
			&= \int_V f_1(x)^\gamma \int_{(1/f_1(x), \infty)} 
			\left( z^\gamma -  f_1(x)^{-\gamma} \right) \lambda (\dd z) \pi (\dd x) \\ 
			& = \int_V \int_{(0,\infty)} 
			z^\gamma f_1(x)^\gamma \bone (zf_1(x) > 1) 
			\lambda (\dd z) \pi (\dd x)  - \overline{\eta}(1) < \infty.
		\end{split}		
	\end{equation}
	Hence, $\E X^*_{+, 1} (1)^{\gamma} < \infty$ by \cite[Theorem 25.3]{sato2013}. In the case $\gamma < 1$, since $X^*_{+, 1}(n)$ is a sum of sum of i.i.d.~increments, the Marcinkiewicz--Zygmund SLLN (e.g.~\cite[Theorem 6.7.1]{gut2013}) gives 
	\begin{equation*}
		\frac{X^*_{+, 1}(n)}{n^{1/\gamma}} \to 0 \quad \text{a.s.}
	\end{equation*}
	and the result follows by monotonicity. For $\gamma \geq 1$, the result follows from the process version of 
	Marcinkiewicz--Zygmund SLLN \citep[Theorem 2.1]{tiefeng1993}, since 
	\begin{align*}
		\E X^*_{+, 1}(1) & = 
		\int_0^\infty \overline{\eta}(r) \dd r = 
		\int_0^\infty \int_V f_1(x) \overline{\lambda }(z) \pi (\dd x) \dd z \\
		& = m_1(\lambda ) \int_V f_1 \dd \pi . 
	\end{align*}	
	In the case $\gamma = 2$, the claim follows from the law of iterated logarithm for L\'evy processes \citep[Proposition 48.9]{sato2013} and the fact that 
	\begin{equation*}
    \begin{split}
		\Var(X^*_{+, 1}(1)) & =  \int_{V \times (0, 1] \times  (0, \infty)} z^2 f_1(x)^2 \nu (\dd x, \dd s, \dd z) 
        = m_2(\lambda ) \int_V f_1^2 \, \dd \pi .
    \end{split}
	\end{equation*}
\end{proof}

It remains to show that $X^*_{+, 2}(t)$ is negligible in the limit, which is the most challenging part. We prove this by carefully decomposing the points of the Poisson random measure according to their contribution to the integral. We then obtain the bounds for the number of points in each piece of this decomposition. 

To this end, for $r > 0$ and for $0 < r_1 < r_2$ let
\begin{equation} \label{eq:D-def}
\begin{split}	
 & D(r,t)  \coloneqq \{ (x,s, z) \colon 
	z f_2(x,t-s) > r, \, x \in V, \, s \in (0,t],\, z > 0\} \\
 & D(r_1, r_2, t)  \coloneqq \{( x,s, z) \colon  
	 r_1 < z f_2(x,t-s) \leq r_2, \, x \in V, \, s \in (0,t], \,  z > 0\}.
\end{split}
\end{equation}

\begin{lemma} \label{lemma:X+2-bound}
	For $\gamma \in (1,2]$ assume \eqref{eq:main-con-lm} and $f_1 \in L^\gamma(\pi)$. If Assumption \ref{assum-1} holds, then for every $0 < r < \infty$ and $t > 1$, there exists a constant $C = C(\pi , \lambda , \gamma)$ such that
	\begin{equation} \label{eq:D-bound}
		\nu (D(r, t)) \leq C r^{-1} t^{2-\gamma}.
	\end{equation}
\end{lemma}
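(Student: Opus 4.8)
The plan is to extract the factor $r^{-1}$ from a Markov-type inequality for the Lévy measure $\lambda$, and then reduce the remaining bound to a deterministic integral controlled by Assumption~\ref{assum-1} together with the hypothesis $f_1 \in L^\gamma(\pi)$; condition \eqref{eq:main-con-lm} itself enters only through the observation that it forces $\int_{(1,\infty)} z^\gamma \lambda(\dd z) < \infty$.

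First I would rewrite $\nu(D(r,t))$. Since $\nu(\dd x,\dd s,\dd z) = \pi(\dd x)\,\dd s\,\lambda(\dd z)$, the substitution $u = t-s$ in \eqref{eq:D-def} gives $\nu(D(r,t)) = \int_V \int_0^t \overline\lambda(r/f_2(x,u))\,\dd u\,\pi(\dd x)$, with the convention $\overline\lambda(r/0) = 0$. Applying the elementary bound $\overline\lambda(y) = \lambda((y,\infty)) \le y^{-1}\int_{(y,\infty)} z\,\lambda(\dd z)$, and using $f_2(x,u) \le f_2(x,0) = f_1(x)$ to enlarge the integration range in the inner Lévy integral, I obtain
\[
\nu(D(r,t)) \;\le\; \frac1r \int_V \left(\int_0^t f_2(x,u)\,\dd u\right)\left(\int_{(r/f_1(x),\infty)} z\,\lambda(\dd z)\right)\pi(\dd x).
\]
This isolates $r^{-1}$, so the remaining task is to show the surviving integral is $O(t^{2-\gamma})$, uniformly in $r$.

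Next I would establish two bounds. For the inner Lévy integral, $\int_{(r/f_1(x),\infty)} z\,\lambda(\dd z) \le \int_{(0,1]} z\,\lambda(\dd z) + \int_{(1,\infty)} z^\gamma\,\lambda(\dd z) =: c_\lambda$, using $z \le z^\gamma$ on $(1,\infty)$; here the first term is $a < \infty$ by hypothesis, and $\int_{(1,\infty)} z^\gamma\lambda(\dd z) < \infty$ follows from \eqref{eq:main-con-lm} unless $f_1 = 0$ $\pi$-a.e.\ (a trivial case): choosing a set $E$ with $\pi(E) > 0$ and $f_1 \ge \delta > 0$ on $E$ and restricting \eqref{eq:main-con-lm} to $E \times \{z > 1/\delta\}$ gives $\int_{(1/\delta,\infty)} z^\gamma\lambda(\dd z) < \infty$, while $\int_{(1,1/\delta]} z^\gamma\lambda(\dd z) \le \delta^{-\gamma}\lambda((1,\infty)) < \infty$. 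For the time integral, the crucial point is that the \emph{full} Assumption~\ref{assum-1} (with $\varepsilon > 0$, not merely its $\varepsilon = 0$ consequence \eqref{eq:f2-inv-derived}) yields $\int_0^\infty f_2(x,u)\,\dd u < \infty$: by the layer-cake identity $\int_0^\infty f_2(x,u)\,\dd u = \int_0^{f_1(x)} f_2^\leftarrow(x,w)\,\dd w = f_1(x)\int_0^1 f_2^\leftarrow(x, uf_1(x))\,\dd u$, and splitting the last integral at $u = 1/N$ and applying Assumption~\ref{assum-1} bounds it by $c_3\, f_1(x)$, the contribution of $(0,1/N]$ being at most $\int_0^{1/N} K u^{\varepsilon-1} f_1(x)\,\dd u$, which converges exactly because $\varepsilon > 0$. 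Combined with the trivial bound $\int_0^t f_2(x,u)\,\dd u \le t f_1(x)$, this gives $\int_0^t f_2(x,u)\,\dd u \le C_0\, f_1(x)\,(t \wedge f_1(x))$.

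Finally I would assemble: plugging the two bounds into the displayed estimate and using the pointwise inequality $(t \wedge f_1(x))\,f_1(x) \le t^{2-\gamma} f_1(x)^\gamma$, valid for $t \ge 1$ and $\gamma \in [1,2]$ (check $f_1(x) \le t$ and $f_1(x) > t$ separately), I get
\[
\nu(D(r,t)) \;\le\; \frac{C_0 c_\lambda}{r}\int_V f_1(x)\,(t\wedge f_1(x))\,\pi(\dd x) \;\le\; \frac{C_0 c_\lambda \|f_1\|_{L^\gamma(\pi)}^\gamma}{r}\, t^{2-\gamma},
\]
which is the claim with $C = C_0 c_\lambda \|f_1\|_{L^\gamma(\pi)}^\gamma$, depending only on $\pi$, $\lambda$, $\gamma$ (and the constants in Assumption~\ref{assum-1}).

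I expect the main difficulty to be identifying these reductions rather than any single estimate: that the Markov inequality produces exactly the required power $r^{-1}$ (nothing sharper in $r$ is needed or available); that the truncated first moment of $\lambda$ is \emph{uniformly} bounded by $c_\lambda$, so that no delicate interplay between $r$ and the $\pi$-integration is required; and that the clean bound $\int_0^t f_2 \le C_0 f_1 (t\wedge f_1)$ rests on the finiteness of $\int_0^\infty f_2(x,u)\,\dd u$, which genuinely uses $\varepsilon > 0$ in Assumption~\ref{assum-1}. A naive route that keeps $\int_0^t f_2(x,u)\,\dd u$ of size $\sim f_1(x)^2\log t$ (all that \eqref{eq:f2-inv-derived} provides) or that tries to separate a $\gamma$-th moment of $\lambda$ near $0$ from the $\pi$-integration would produce a spurious $\log t$ factor, or would require a moment of $\lambda$ that need not exist.
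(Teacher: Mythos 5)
Your proof is correct, but it is organized differently from the paper's. The paper first converts the time integral into the quantity $f_2^{\leftarrow}(x,r/z)\wedge t$, splits into three regimes ($f_1(x)>t$; $r<zf_1(x)\le Nr$; $zf_1(x)>Nr$), applies only the $\varepsilon=0$ consequence \eqref{eq:f2-inv-derived} of Assumption \ref{assum-1} to the generalized inverse, and performs the Markov inequality against $z\,\lambda(\dd z)$ separately inside each piece; the weight $z\,\bone(zf_1(x)>Nr)$ is what supplies the extra decay in the third piece. You instead apply the Markov inequality at the very start, bound the truncated first moment of $\lambda$ uniformly by $m_1(\lambda)$ so that the $z$- and $u$-integrations decouple completely, and then control $\int_0^t f_2(x,u)\,\dd u\le C_0 f_1(x)(t\wedge f_1(x))$ by a layer-cake computation that genuinely requires $\varepsilon>0$ in Assumption \ref{assum-1}. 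Your route is cleaner and shorter; the paper's buys the sharper information that this particular lemma needs only the $\varepsilon=0$ form of the assumption (the authors track this distinction elsewhere, noting that $\varepsilon>0$ is needed only for one term in the proof of the subsequent lemma). Since the full Assumption \ref{assum-1} is among the stated hypotheses, your use of $\varepsilon>0$ is legitimate. Two further remarks: your derivation of $\int_{(1,\infty)}z^\gamma\lambda(\dd z)<\infty$ from \eqref{eq:main-con-lm} (modulo the trivial case $f_1=0$ $\pi$-a.e.) is more careful than the paper's bare assertion that $m_1(\lambda)<\infty$, though both ultimately also rely on the section's standing assumption $\int_{(0,1]}z\,\lambda(\dd z)<\infty$; and your constant $C$ additionally depends on the constants $K,N,\varepsilon$ of Assumption \ref{assum-1}, exactly as the paper's does, so this is only a notational quibble with the lemma statement, not with your argument.
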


\begin{proof}
	First note that
	\begin{align*}
		\nu  ( D(r, t) ) & = 
		\int_V \int_{(0,\infty)} \int_0^t \bone(z f_2(x, t-s) > r) \dd s \lambda (\dd z) \pi (\dd x) \\
		&= \int \int \bone ( z f_1(x) > r) 
		\left( f_2^\leftarrow (x, r/z) \wedge t \right)
		\lambda (\dd z) \pi (\dd x) \\
		& \leq 
		t \int_{f_1^{-1}((t,\infty))} 
		\overline \lambda (r/f_1(x)) \pi (\dd x) \\
		& \quad + \int_{f_1^{-1}((0,t])} 
		\int f_2^\leftarrow (x, r/z) \bone (z f_1(x) > r) \lambda (\dd z) \pi (\dd x).			
	\end{align*}
	By Assumption \ref{assum-1} with $\varepsilon = 0$, the second integral can be decomposed and bounded as follows
	\begin{align*} 
		&\int_{f_1^{-1} ( (0,t] ) } 
		\int f_2^\leftarrow (x, r/z) \bone ( N r \geq z f_1(x) > r) 
		\lambda (\dd z) \pi (\dd x) \\
		&\quad  + \int_{f_1^{-1} ( (0,t] ) } 
		\int f_2^\leftarrow (x, r/z)  \bone (zf_1(x) > N r) 
		\lambda (\dd z) \pi (\dd x) \\			
		&\leq \int_{f_1^{-1} ( (0,t] ) } 
		\int  f_2^\leftarrow 
		\left( x, \frac{1}{N} f_1(x) \right)
		\bone (z f_1(x) > r)
		\lambda (\dd z) \pi (\dd x) \\
		&\quad + \int_{f_1^{-1} ( (0,t] ) } 
		\int  f_2^\leftarrow 
		\left(x, \frac{r}{z f_1(x)} f_1(x) \right) 
		\bone (zf_1(x) > N r)
		\lambda (\dd z) \pi (\dd x) \\ 			
		&\leq  KN \int_{f_1^{-1} ( (0,t] ) } 
		\overline \lambda (r/f_1(x)) f_1(x) \pi (\dd x)   \\ 
		&\quad +  K r^{-1} \int_{f_1^{-1} ( (0,t] ) }
		\int  z f_1(x)^{2} \bone ( z f_1(x) > N r)  
		\lambda (\dd z) \pi (\dd x).
	\end{align*}
	Therefore,
	\begin{equation} \label{eq:D-r1-r2}
		\begin{split}
			\nu  ( D(r, t) ) \leq & \;  
			t \int_{f_1^{-1}((t,\infty))} \overline \lambda (r/f_1(x)) \pi (\dd x) \\ 
			& + c \int_{f_1^{-1} ( (0,t] ) } 
			\overline\lambda (r/f_1(x)) f_1(x) \pi (\dd x ) \\ 
			& + 
			cr^{-1} \int_{f_1^{-1} ( (0,t] ) }
			\int z f_1(x)^{2} \bone ( z f_1(x) > N r)  
			\lambda (\dd z) \pi (\dd x ) \\ 
			\eqqcolon & \; I_1 + I_2 + I_3.
		\end{split}
	\end{equation}
	By the assumptions, we have $m_1(\lambda ) < \infty$ and 
	\begin{equation} \label{eq:I1-bound}
		\begin{split}
			I_1 &= t \int_{f_1^{-1}((t,\infty))} \int_{(r/f_1(x), \infty)} 
			z z^{-1} \lambda (\dd z) \pi (\dd x) \\
			&\leq m_1(\lambda ) \frac{t}{r}  \int_{f_1^{-1} ((t, \infty))} 
			f_1(x) \pi (\dd x) \\
			& \leq m_1(\lambda ) \frac{t^{2-\gamma}}{r} 
			\int_V f_1(x)^\gamma  \pi (\dd x) \\ 
			& \leq C r^{-1} t^{2-\gamma},
		\end{split}
	\end{equation}
	and for $I_2$, similarly
	\begin{equation} \label{eq:I2-bound}
		\begin{split}
			I_2 
			&\leq c m_1(\lambda ) \frac{1}{r}
			\int_{f_1^{-1}((0, t])} f_1(x)^2 \pi (\dd x) \\ 
			&\leq c m_1(\lambda ) \frac{t^{2-\gamma}}{r} 
			\int_V f_1(x)^\gamma \pi(\dd x) \\
            & \leq C r^{-1} t^{2-\gamma}.
		\end{split}		
	\end{equation}
	For $I_3$ we obtain the bound 
	\begin{equation} \label{eq:I3-bound}
		\begin{split} 
			I_3 &\leq cr^{-1} 
			\int_{f_1^{-1} ( (0,t] ) } 
			f_1(x)^{2} \int_{(Nr/f_1(x), \infty)} z 
			\lambda (\dd z) \pi (\dd x) \\ 
			& \leq c r^{-1} m_1(\lambda ) 
			\int_{f_1^{-1} ( (0,t] ) }
			f_1(x)^2 \pi (\dd x) \\ 
			&\leq C r^{-1} t^{2-\gamma}.
		\end{split}
	\end{equation}
	Combining \eqref{eq:I1-bound}, \eqref{eq:I2-bound} and \eqref{eq:I3-bound}, gives
	\eqref{eq:D-bound}.
\end{proof} 

For a Poisson random variable $N_\ell$ with parameter $\ell  > 0$, we will use the tail bounds \citep[Lemma 3.1]{chong-kevei-2020}
\begin{equation} \label{eq:poisson-tail}
\begin{split}
    &\P(N_\ell \geq n) \leq \frac{\ell^n}{n!}, \quad n \in \N, \\
	&\P(N_\ell \geq x) \leq \exp(-0.19 x), \quad x \geq 2 \ell .
\end{split}
\end{equation}

\begin{lemma} \label{lemma:X+2}
	For $\gamma \in (0, 2]$, $\gamma\neq 1$, assume \eqref{eq:main-con-lm}. If Assumption \ref{assum-1} holds, then
	\begin{equation*}
		\lim_{t \to \infty} \frac{X^*_{+, 2}(t)}{t^{1/\gamma}} = 0 \quad \text{a.s.}	
	\end{equation*}
\end{lemma}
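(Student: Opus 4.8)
The plan is to treat the easy range $\gamma\in(0,1)$ by domination, and the main range $\gamma\in(1,2]$ by a subsequence argument combined with a layered decomposition of the points of $\mu$. For $\gamma\in(0,1)$ there is nothing to do: since $0\le f_2(x,t-s)\le f_1(x)$ we have $0\le X^*_{+,2}(t)\le X^*_{+,1}(t)$ pointwise, while $\bone(\gamma\ge1)=0$ in this range, so Lemma \ref{lemma:X+1} (under \eqref{eq:main-con-lm}) already gives $X^*_{+,1}(t)/t^{1/\gamma}\to0$ a.s. From now on assume $\gamma\in(1,2]$.

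The first step is to reduce to a polynomially spaced subsequence $t_n=n^a$, the exponent $a>0$ to be fixed at the end. For $t\in[t_n,t_{n+1}]$ split $X^*_{+,2}(t)=\sum_{\tau_k\le t}\zeta_k f_2(\xi_k,t-\tau_k)$ into the points with $\tau_k\le t_n$ and those with $\tau_k\in(t_n,t]$. Because $u\mapsto f_2(x,u)$ is non-increasing, the first group contributes at time $t$ no more than at $t_n$, hence is $\le X^*_{+,2}(t_n)$; the second group is $\le\sum_{\tau_k\in(t_n,t]}\zeta_k f_1(\xi_k)\le X^*_{+,1}(t_{n+1})-X^*_{+,1}(t_n)$. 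By Lemma \ref{lemma:X+1} together with a maximal inequality for the increments of the subordinator $X^*_{+,1}$ --- and, when $\gamma=2$, its law of the iterated logarithm --- this oscillation term is $o(t_n^{1/\gamma})$ a.s.\ provided $a$ is not too large. It therefore suffices to prove $X^*_{+,2}(t_n)=o(t_n^{1/\gamma})$ a.s.

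For the subsequence, fix $\epsilon>0$ and write $X^*_{+,2}(t_n)=\int_0^\infty\mu(D(r,t_n))\,\dd r$, with $D(\cdot,\cdot)$ and $D(\cdot,\cdot,\cdot)$ as in \eqref{eq:D-def}. I would decompose dyadically in $r$ and separate a high regime (contributions above $\delta t_n^{1/\gamma}$ for a small fixed $\delta$) from a low regime. In the high regime, Lemma \ref{lemma:X+2-bound} gives $\nu(D(\delta t_n^{1/\gamma},t_n))\le C\delta^{-1}t_n^{\,2-\gamma-1/\gamma}$, and $2-\gamma-1/\gamma<0$ for $\gamma\ne1$; the Poisson bound $\P(N_\ell\ge m)\le\ell^m/m!$ from \eqref{eq:poisson-tail} together with Borel--Cantelli then shows that a.s., for all large $n$, at most $m_0=m_0(\gamma,a)$ points lie above level $\delta t_n^{1/\gamma}$, each of which contributes at most one jump of $X^*_{+,1}$ before time $t_n$, hence $o(t_n^{1/\gamma})$ a.s.\ (since $\E\,X^*_{+,1}(1)^{\gamma}<\infty$); the remaining high dyadic layers (between the fixed $\delta$ and $\delta t_n^{1/\gamma}$) are summed using the same $\nu$-bound and the exponential tail in \eqref{eq:poisson-tail}. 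In the low regime one controls the expected contribution $\int_0^{\delta t_n^{1/\gamma}}\nu(D(r,t_n))\,\dd r$: here the full strength of Assumption \ref{assum-1} --- the gain $\varepsilon>0$ in $f_2^\leftarrow(x,uf_1(x))\le Ku^{\varepsilon-1}f_1(x)$ --- turns the (non-integrable at $0$) estimate of Lemma \ref{lemma:X+2-bound} into one of the form $\nu(D(r,t_n))\le Cr^{\varepsilon-1}$ times a finite moment of $\lambda$ and a power of $t_n$, which is integrable near $r=0$ and yields $\int_0^{\delta t_n^{1/\gamma}}\nu(D(r,t_n))\,\dd r=O(t_n^{\theta})$ for some $\theta<1/\gamma$; a Bernstein-type concentration inequality for the integral of the (bounded) low-regime layers against $\mu$ then makes the probability that the low part of $X^*_{+,2}(t_n)$ exceeds $\epsilon t_n^{1/\gamma}$ summable in $n$, and Borel--Cantelli completes the proof.

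The step I expect to be the main obstacle is the low regime. The counting bound $\nu(D(r,t))\le Cr^{-1}t^{2-\gamma}$ diverges non-integrably as $r\downarrow0$, so the accumulated mass of the infinitely many small summands cannot be controlled through point counts alone, and it is precisely the positive exponent $\varepsilon$ in Assumption \ref{assum-1} that rescues the estimate; one must track how the resulting exponent $\theta$ --- and the admissible window for the subsequence exponent $a$ --- depends on $\varepsilon$ and on the behaviour of $\lambda$ near the origin, while simultaneously keeping $a$ small enough that the oscillation of $X^*_{+,2}$ over $[t_n,t_{n+1}]$ stays negligible. The boundary case $\gamma=2$, where this oscillation estimate is tightest, requires the most care.
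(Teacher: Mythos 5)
Your overall strategy is the same as the paper's: handle $\gamma<1$ by domination with $X^*_{+,1}$, and for $\gamma\in(1,2]$ stratify the Poisson points by the size of their contribution $\zeta_k f_2(\xi_k,t-\tau_k)$, count each stratum via Lemma \ref{lemma:X+2-bound} and the Poisson tails \eqref{eq:poisson-tail}, bound the top stratum by the maximal jump (finite $\gamma$-moment of $X^*_{+,1}(1)$), and use the $\varepsilon>0$ in Assumption \ref{assum-1} to tame the accumulation of small contributions. Two of your steps, however, do not close as sketched. First, the intermediate range $[\delta,\delta t_n^{1/\gamma}]$: for the upper dyadic layers there, $\nu\bigl(D(2^j\delta,2^{j+1}\delta,t_n)\bigr)\le C(2^j\delta)^{-1}t_n^{2-\gamma}$ tends to $0$, so the bound $\P(N_\ell\ge 2\ell)\le e^{-0.38\ell}$ is not summable; if you instead inflate the threshold to $c\log(n(j+1))$ to restore summability, the top layer alone contributes $\approx \delta t_n^{1/\gamma}\log n$, which is not $o(t_n^{1/\gamma})$. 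The cut between the ``finitely many points'' regime and the dyadic-count regime must sit at $t_n^{b}$ with $2-\gamma<b<1/\gamma$, not at $\delta t_n^{1/\gamma}$; this is exactly what the paper's finite cascade of levels $n^{a_0}>n^{a_1}>\dots>1$ (with $a_0$ just above $2-\gamma$ and steps of size $c_0/2=(\gamma+1/\gamma-2)/2$) accomplishes. Second, your oscillation control over $[t_n,t_{n+1}]$ for $\gamma=2$: the LIL for $X^*_{+,1}$ only gives $X^*_{+,1}(t)-t\mu=O(\sqrt{t\log\log t})$, so $X^*_{+,1}(t_{n+1})-X^*_{+,1}(t_n)$ is not $o(t_n^{1/2})$ by that route. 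The cleanest fix is to take integer times ($a=1$), where the second group is a single i.i.d.\ block $Y(n+1)$ whose running maximum is $o(n^{1/\gamma})$ by the moment condition, and monotonicity of $f_2$ handles the first group — which is the paper's interpolation and removes the need for a polynomially spaced subsequence altogether.

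Where you genuinely diverge is the low regime: you propose bounding the compensator $\int_0^{\delta}\nu(D(r,t_n))\,\dd r$ — made finite near $r=0$ by the $r^{\varepsilon-1}$ gain from Assumption \ref{assum-1} — and then applying a Bernstein/Bennett concentration bound for the Poisson integral of the bounded low layers. This is a valid alternative to the paper's device of bounding the dyadic counts by the explicit quantities $b_{n,j}$ and summing $\sum_j 2^{-j}b_{n,j}$; your version trades the careful bookkeeping of $b_{n,j}$ for a concentration inequality not stated in the paper, but both deliver $O(t_n^{2-\gamma}+\log t_n)=o(t_n^{1/\gamma})$. You correctly identified that the positive $\varepsilon$ is indispensable precisely here. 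With the two repairs above, your argument would go through.
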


\begin{proof}
	If $\gamma < 1$, the statement follows from Lemma \ref{lemma:X+1} since $X^*_{+, 2}(t) \leq X^*_{+, 1}(t)$. Thus, we assume $\gamma > 1$ and for $n \in \N$ set 
	\begin{equation*}
		Y(n) \coloneqq \int_{V \times (n-1, n] \times (0, \infty)} z f_1(x) \mu(\dd x, \dd s, \dd z)  = \sum_{n-1 \leq s_k < n} z_k f_1(x_k).
	\end{equation*}
	Note that $Y(1), Y(2),\ldots$ form an i.i.d.~sequence, which follows from the fact that $\mu$ is a Poisson random measure. Furthermore, they have the same distribution as $X^*_{+, 1}(1)$. Condition \eqref{eq:main-con-lm} implies $\E X^*_{+, 1}(1)^\gamma < \infty$, as was shown in \eqref{eq:X+1-proof}, which in turn yields $\E Y(1)^{\gamma} < \infty$.
	By \cite[Theorem 3.5.1]{embrechts1997}, it follows that 
	\begin{equation} \label{eq:max-con}
		n^{-1/\gamma} \max_{1 \leq i \leq n} Y(i) \to 0 \quad \text{a.s.}
	\end{equation}
	Furthermore, $\max_{0 < s_k < n} z_k f_1(x_k) \leq \max_{1 \leq i \leq n} Y(i)$.
	First we show that
	\begin{equation} \label{eq:X+2-main}
		\lim_{n \to \infty} \frac{X^*_{+, 2}(n)}{n^{1/\gamma}} = 0 \quad \text{a.s.}
	\end{equation}
	Let $c_0 = \gamma + 1/\gamma - 2 > 0$, choose $a_0 > 0$ such that 
	$2 - \gamma < a_0 < 2-\gamma + c_0/2$ and set $m = \lfloor 2a_0/c_0 \rfloor$,
    where $\lfloor \cdot \rfloor$ stands for the lower integer part. 
	For $i = 0,1,\ldots,m-1$, define $a_{i+1} = a_i - c_0/2$ and $a_{m+1} = 0$. 
	Finally, select $M \in \N$ such that $M(2-\gamma - a_0) < -1$.
	
	We begin by handling the large contributions. 
    Recall the sets from \eqref{eq:D-def}, and for $n \in \N$, define the events
	\begin{align*}
		&A_{n, 0} \coloneqq \left\lbrace \mu(D(n^{a_0},n) ) > M  \right\rbrace, \\
		&A_{n, i+1} \coloneqq \left\lbrace
		\mu( D( n^{a_{i+1}},  n^{a_i},  n )  ) > 2Cn^{2-\gamma - a_{i+1}}
		\right\rbrace , \quad i = 0,1,\ldots, m,	
	\end{align*} 
	where $C = C(\pi , \lambda , \gamma)$ is the constant from Lemma \ref{lemma:X+2-bound}. From \eqref{eq:poisson-tail}, it follows that
	\begin{equation*}
		\P(A_{n, 0}) \leq \frac{\nu ( D(n^{a_0}, n ) )^M }{M!} \leq 
		Cn^{M(2-\gamma - a_0)},
	\end{equation*}
	and for $i = 0,1, \ldots, m$
	\begin{equation*}
		\P(A_{n, i+1}) \leq \exp \left(-0.19 \cdot 2 Cn^{2- \gamma - a_{i+1}} \right).
	\end{equation*}
	Since these bounds are summable, the first Borel--Cantelli lemma implies that the events $A_{n, i}$ will occur for only finitely many $n$ almost surely. If we now choose sufficiently large $n$ such that $A_{n, 0}$ does not occur, the contribution of the points $(\xi_k, \tau_k, \zeta_k) \in D(n^{a_0}, n)$ is bounded by 
	\begin{equation} \label{eq:A_n0}
		M \max_{0 < \tau_k < n} \zeta_k f_2(\xi_k, n - \tau_k) \leq 
		M \max_{0 < \tau_k < n} \zeta_k f_1(\xi_k) \leq 
		M \max_{1 \leq i \leq n} Y(i),
	\end{equation}
	which is $o(n^{1/\gamma})$ a.s.~by \eqref{eq:max-con}. Similarly, the contribution of the points $(\xi_k, \tau_k, \zeta_k) \in D( n^{a_{i+1}},  n^{a_i},  n )$, if $n$ is large enough such that $A_{n, i+1}$ does not occur, is bounded by 
	\begin{equation} \label{eq:A_ni+1}
		n^{a_i} \, 2Cn^{2 - \gamma - a_{i +1} } =
		2Cn^{2 - \gamma + c_0/2} =
		2C n^{1/\gamma - c_0/2} = o(n^{1/\gamma}).
	\end{equation}	
	
	We now consider the small contributions. For $n \in \N$, define the events
	\begin{equation*}
		B_{n, j} \coloneqq \left\lbrace 
		\mu( D( 2^{-j-1}, 2^{-j}, n  ) > b_{n, j}  )
		\right\rbrace , \quad j = 0, 1, \ldots,
	\end{equation*}
	with 
	\begin{align*}
		& b_{n,j} = \; 
		6 \log(n (j+1)) + 
		2n \int_{f_1^{-1}( (n, \infty) ) }  
		\overline\lambda (2^{-j-1} / f_1(x)) \pi (\dd x)  \\ 
		&+ 2 c\int_{f_1^{-1}( (0, n] ) }  
		\overline\lambda (2^{-j-1} / f_1(x)) f_1(x) 
		\pi (\dd x)  \\ 
		& + 2 \cdot 2^{(j+1)(1-\varepsilon )} c \int_{f_1^{-1}( (0, n] ) }  
	    \int z^{1-\varepsilon} f_1(x)^{2-\varepsilon} \bone (zf_1(x) > 2^{-j-1} N )  
	    \lambda (\dd z) \pi (\dd x).
	\end{align*}	
	From \eqref{eq:D-r1-r2}, we have $2 \nu  ( D( 2^{-j-1}, 2^{-j}, n )  ) \leq b_{n, j}$, so \eqref{eq:poisson-tail} again applies and leads to
	\begin{equation*}
		\P(B_{n, j}) \leq \exp(-0.19 b_{n, j}) \leq  
		\exp(-0.19 \cdot 6 \log(n(j + 1))) = 
		(n(j+1))^{-1.14}.
	\end{equation*}
	By the first Borel--Cantelli lemma, $B_{n,j}$ will occur for at most finitely many $n$ and $j$. The sum of contributions such that $\zeta_k f_2(\xi_k, n - \tau_k) \in (2^{-j-1}, 2^{-j}]$, for $j = 0,1,\ldots$, and sufficiently large $n$ such that $B_{n, j}$ does not occur, is bounded by
	\begin{align*}
		& \sum_{j = 0}^{\infty} 2^{-j} b_{n, j} \leq    
		 c \log n \\
         & + 
		2 n \int_{ f_1^{-1}((n,\infty)) } 
		\left(  
		\sum_{j = 0}^{\infty} 2^{-j} 
		\int \bone(zf_1(x) >  2^{-j-1})
		\lambda (\dd z)  \right) \pi (\dd x)\\
		& +   2 c \int_{ f_1^{-1}((0,n]) } f_1(x) \left( 
		\sum_{j = 0}^{\infty} 2^{-j} 
		\int \bone(zf_1(x) >  2^{-j-1})  
		\lambda (\dd z)	\right) \pi (\dd x) \\
		&  +  2^{2- \varepsilon} c \int_{ f_1^{-1}((0,n]) } f_1(x)^{2-\varepsilon}  
		\sum_{j = 0}^{\infty} 2^{-j\varepsilon}
		\int z^{1-\varepsilon} \bone(z f_1(x) > 2^{-j-1} N)    
		\lambda (\dd z) \pi (\dd x).
	\end{align*}
	To estimate the sums above, we use an elementary bound
	\begin{equation} \label{eq:geosum}
		\sum_{j=0}^\infty 2^{-j\tau} \bone ( 2^j > a ) \leq 
		(a^{-\tau} \wedge 1 ) \frac{1}{1 - 2^{-\tau}}, \quad 
		a > 0, \, \tau > 0.
	\end{equation}
	Changing the order of summation and integration and using \eqref{eq:geosum} with $\tau = 1$ gives
	\begin{equation*}
		\sum_{j = 0}^{\infty} 2^{-j} \int_{(0,\infty)} \bone(zf_1(x) >  2^{-j-1}) \lambda (\dd z) 
		\leq \int_{(0,\infty)} (1 \wedge (2 z f_1(x))) \lambda (\dd z).
	\end{equation*}
	To handle the other sum, we again apply \eqref{eq:geosum} with $\tau = \varepsilon$ and have
	\begin{align*}
		& \sum_{j = 0}^{\infty} 2^{-j\varepsilon} \int_{(0,\infty)} 
		z^{1-\varepsilon} \bone(z f_1(x) > 2^{-j-1} N) \lambda (\dd z) \\
		& \leq \frac{2}{N(1-2^{-\varepsilon})} f_1(x)^\varepsilon 
		\int_{(0, \infty)} z \lambda (\dd z) \\
		& \leq c f_1(x)^\varepsilon.
	\end{align*}
   Note that this is the only term where we do need Assumption \ref{assum-1} with $\varepsilon > 0$.
	Substituting back, we obtain
	\begin{equation} \label{eq:sum-bnj}
		\begin{split}
			\sum_{j = 0}^{\infty} 2^{-j}b_{n, j} &\leq 
			c \log n + 
			c n \int_{f_1^{-1}((n, \infty))} f_1(x) \pi (\dd x) +
		    c \int_{f_1^{-1}((0, n])} f_1(x)^{2} \pi (\dd x) \\ 
			&\leq c \log n + cn^{2-\gamma} . 
		\end{split}
	\end{equation}	
	
	Summarizing \eqref{eq:A_n0}, \eqref{eq:A_ni+1} and \eqref{eq:sum-bnj}, we conclude that $X^*_{+, 2}(n)$ is bounded by
	\begin{equation*}
	X^*_{+, 2}(n) \leq M \max_{1 \leq i \leq n} Y(i) + 2C n^{1/\gamma - c_0 / 2} + c\log n + cn^{2-\gamma} + W,
	\end{equation*}
	where $W$ is a finite random variable coming from the Borel--Cantelli lemma. All terms are asymptotically smaller than $n^{1/\gamma}$. Specifically, for the second term this is stated in \eqref{eq:max-con}. Therefore, \eqref{eq:X+2-main} follows. Now, for $t \in (n, n+1)$
	\begin{align*}
		X_{+, 2}^*(t) =& \; 
		\int_{(0, t] \times  V \times (0, \infty)} zf_2(x, t-s) 
		\mu(\dd x, \dd s, \dd z) \\ 
		\leq &\; 
		\int_{(0, n] \times  V \times  (0, \infty)} zf_2(x,t-s) 
		\mu(\dd x, \dd s, \dd z) \\ 
		&+ \int_{(n, n+1] \times V \times (0, \infty)} z f_1(x)
		\mu(\dd x, \dd s, \dd z) \\ 
		=& \; X^*_{+, 2}(n) + Y(n + 1),		
	\end{align*}	
	which implies 
	\begin{equation*}
		t^{-1/\gamma}X^*_{+, 2}(t) \leq n^{-1/\gamma} \left(X^*_{+, 2}(n) + Y(n+1)\right)
		\to 0 \quad \text{a.s.}	
	\end{equation*}
  \end{proof}

\begin{proof}[Proof of Theorem \ref{tm:finite-var}]
	If $\lambda ((-\infty, 0)) = 0$, the proof follows from Lemmas \ref{lemma:X-}, \ref{lemma:X+1}, \ref{lemma:X+2} and decomposition \eqref{eq:mma-decomposition}. Otherwise, we apply the same decomposition, but treating positive and negative jumps separately. More precisely, by considering the measures $\mu \bone(z > 0)$ and $\mu \bone(z \leq 0)$, we get 
	\begin{equation*}
		X^*(t) = X^{*, +}_-(t) - X^{*, -}_-(t)  + 
				X^{*, +}_{+, 1}(t) - X^{*, -}_{+, 1}(t)
				- X^{*, +}_{+, 2}(t) + X^{*, -}_{+, 2}(t).	
	\end{equation*}
	The statement for $\gamma \in (0, 2]$ follows by applying Lemmas \ref{lemma:X-}, \ref{lemma:X+1} and \ref{lemma:X+2} to positive and negative jumps separately.
	
	If $\gamma = 2$, then the proof follows in the same way as for $\gamma \in (0, 2)$. In general, we decompose $X^*_-(t)$ and $X_{+, 2}^*(t)$ as in the preceding case. After normalization by $\sqrt{2t \log \log t}$, each of these terms converges to $0$ almost surely. The claim then follows from \cite[Proposition 48.9]{sato2013}, applied to $X_{+, 1}^*(t)$.	
\end{proof} 

\subsection{Proof of Theorem \ref{tm:infinite-variation}}

We now turn to the proof of Theorem \ref{tm:infinite-variation}. Suppose that $\int_{|z| \leq 1} |z| \lambda (\dd z) = \infty$, $b = 0$ and $a = 0$. In this case, centering is required in \eqref{eq:levy-ito}. 
We will first prove the claim in the case when $\lambda $ is supported on $(0, 1]$. Then we combine this with Theorem \ref{tm:finite-var} to establish Theorem \ref{tm:infinite-variation}. The integrated process takes the form
\begin{align*}
	X^*(t) =& \; 
	\int_{V \times (-\infty, 0] \times (0, 1]} 
	z \left(f_2(x, -s) - f_2(x, t-s) \right)
	(\mu - \nu )(\dd x, \dd s, \dd z) \\ 
	&+ 
	\int_{V \times (0, t] \times (0, 1]} 
	z \left(f_1(x) - f_2(x, t-s)\right)
	(\mu - \nu )(\dd x, \dd s, \dd z) \\ 
	\eqqcolon&\; X_-^*(t) + X_+^*(t).	
\end{align*}

The following two lemmas ensure that the paths are sufficiently smooth, so that it suffices to prove the almost sure limits along integer times.

\begin{lemma} \label{lemma:delta-t}
	If Assumption \ref{assum-2} holds, then there exist continuous modifications of $X^*_-$ and $X^*_+$ such that for $\theta \in [0, 1/2)$ 
	\begin{equation*}
		\sup_{n \in \N} \E \left(
		\sup_{t_1 \ne t_2, \, t_1, t_2 \in [n, n+1]}
		\frac{|X^*_-(t_2) - X^*_-(t_1)| + |X^*_+(t_2) - X^*_+(t_1)|}
		{|t_2 - t_1|^{\theta}} 		
		\right)^2 < \infty.		
	\end{equation*}
\end{lemma}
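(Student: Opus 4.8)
The plan is to prove the $L^2$ modulus-of-continuity estimate $\E|X^*_\pm(t_2)-X^*_\pm(t_1)|^2\le c\,|t_2-t_1|^2$ for $t_1,t_2\in[n,n+1]$, with a constant $c$ independent of $n$, and then to promote it to the claimed pathwise bound by a quantitative Kolmogorov--Chentsov argument based on the Garsia--Rodemich--Rumsey inequality.

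For the $L^2$ estimate, fix $t_1<t_2$ in $[n,n+1]$ and put $h=t_2-t_1$. Since $\lambda$ is supported on $(0,1]$, the increment of $X^*_-$ equals $\int_{V\times(-\infty,0]\times(0,1]}z\,(f_2(x,t_1-s)-f_2(x,t_2-s))\,(\mu-\nu)(\dd x,\dd s,\dd z)$, while $X^*_+(t_2)-X^*_+(t_1)$ is the sum of a compensated integral over $s\in(0,t_1]$ with integrand $z\,(f_2(x,t_1-s)-f_2(x,t_2-s))$ and one over $s\in(t_1,t_2]$ with integrand $z\,(f_1(x)-f_2(x,t_2-s))$; these last two are $L^2$-orthogonal since the $s$-sets are disjoint. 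Applying the $L^2$-isometry for compensated Poisson integrals (equivalently, \cite[Theorem 1]{marinelli2014} with $p=2$) and using $\int_{(0,1]}z^2\lambda(\dd z)\le\int(1\wedge z^2)\lambda(\dd z)<\infty$, each of the three second moments reduces, after the substitutions $u=-s$, $u=t_1-s$, $u=t_2-s$, to a $\pi$-integral of $\int_0^\infty(f_2(x,u)-f_2(x,u+h))^2\,\dd u$ or of $\int_0^h(f_1(x)-f_2(x,u))^2\,\dd u$. Assumption~\ref{assum-2} with $p=2$, i.e.~\eqref{eq:f-cond-int1} and \eqref{eq:f-cond-int2}, bounds these inner integrals by $c_2(h\wedge f_1(x))^2f_1(x)$ and $c_2(h\wedge f_1(x))^2h$ respectively, and both are at most $c_2h^2f_1(x)$ (use $(h\wedge f_1(x))^2\le h^2$ in the first and $(h\wedge f_1(x))^2\le h\,f_1(x)$ in the second). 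Since $f_1\in L^1(\pi)$, integrating in $x$ gives $\E|X^*_\pm(t_2)-X^*_\pm(t_1)|^2\le c\,h^2$ with $c$ depending only on $c_2$, $\int_{(0,1]}z^2\lambda(\dd z)$ and $\|f_1\|_{L^1(\pi)}$; this holds for all $t_1,t_2$, so on $[n,n+1]$ the constant is independent of $n$.

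To upgrade this, apply the Garsia--Rodemich--Rumsey inequality on each interval $[n,n+1]$ with Young function $\Psi(u)=u^2$ and gauge $u\mapsto u^{1+\theta}$ for $\theta\in(0,1/2)$. Taking expectations, using Tonelli and the $L^2$ estimate,
\[
\E\int_n^{n+1}\!\!\int_n^{n+1}\frac{|X^*_\pm(t_2)-X^*_\pm(t_1)|^2}{|t_2-t_1|^{2(1+\theta)}}\,\dd t_1\,\dd t_2
\le c\int_n^{n+1}\!\!\int_n^{n+1}|t_2-t_1|^{-2\theta}\,\dd t_1\,\dd t_2=\frac{2c}{(1-2\theta)(2-2\theta)}<\infty,
\]
the right-hand side being finite precisely because $2\theta<1$, and independent of $n$. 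The Garsia--Rodemich--Rumsey inequality then yields continuous modifications of $X^*_-$ and $X^*_+$ on $[n,n+1]$ whose $\theta$-H\"older seminorms have second moment bounded by a constant independent of $n$; glueing these modifications across $n\in\N$ (they agree almost surely at the integer endpoints) and using that the $\theta$-H\"older seminorm of a sum is at most the sum of the two seminorms, together with $(a+b)^2\le 2a^2+2b^2$, gives the assertion for $\theta\in(0,1/2)$. The case $\theta=0$ follows by dominating the quotient by the one with any $\theta'\in(0,1/2)$ on the length-one interval $[n,n+1]$. The single delicate point throughout is the uniformity in $n$, and this is furnished exactly by the translation structure of the $X^*_-$ increment (it depends on $t_1,t_2$ only through $h$) and by the confinement of the extra $X^*_+$ piece to an $s$-window of length $h\le 1$; everything else is routine.
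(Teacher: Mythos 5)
Your proof is correct and follows essentially the same route as the paper: the same $L^2$ increment bound $\E|X^*_\pm(t_2)-X^*_\pm(t_1)|^2\le c\,(t_2-t_1)^2$ (uniform in $n$) obtained from the isometry for compensated Poisson integrals together with \eqref{eq:f-cond-int1} and \eqref{eq:f-cond-int2}, followed by a standard upgrade to a moment bound on the $\theta$-H\"older seminorm for $\theta<1/2$. The only difference is cosmetic: you invoke the Garsia--Rodemich--Rumsey inequality for that last step, whereas the paper cites the Kolmogorov-type continuity theorem \cite[Theorem 4.3]{khoshnevisan2009}; both are interchangeable here.
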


\begin{proof}
	For $n \in \N$, let $t_1, t_2 \in [n, n+1]$ be such that $t_1 < t_2$.
	We treat each increment separately, applying \cite[Theorem 1]{marinelli2014} with $p = \alpha = 2$ to bound the second moment. The increments are
	\begin{align*}
		& X_-^*(t_2) - X_-^*(t_1) \\
        & = \; \int_{V \times (- \infty, 0] \times (0, 1]}
		z \left(f_2(x, t_1 - s) - f_2(x, t_2 - s) \right)
		(\mu - \nu )(\dd x, \dd s, \dd z), \\
		& X^*_+(t_2) - X^*_+(t_1)) \\
        & = \; 
		\int_{V \times (0, t_1] \times (0, 1]} 
		z (f_2(x, t_1- s) - f_2(x, t_2- s))
		(\mu - \nu )(\dd x, \dd s, \dd z)  \\
		&\quad + \int_{V \times (t_1, t_2] \times (0, 1]} 
		z (f_1(x) - f_2(x, t_2- s))
		(\mu - \nu )(\dd x, \dd s, \dd z). 
	\end{align*}	
	By \eqref{eq:f-cond-int1} of Assumption \ref{assum-2}, we have the following for $X^*_-$
	\begin{align*}
		& \E (X_-^*(t_2) - X_-^*(t_1))^2 \\
		& \leq c \int_{-\infty}^0 \int_V \int_{(0, 1]} z^2  
		\left(f_2(x, t_1 - s) - f_2(x, t_2 - s) \right)^2
		\lambda (\dd z) \pi (\dd x) \dd s \\
		& \leq c \, c_2 \, m_2(\lambda )  \int_V (\Delta t \wedge f_1(x))^2 f_1(x)
		\pi (\dd x) \\
		& \leq c \, c_2 \, (\Delta t)^2 \, m_2(\lambda )  
        \int_V  f_1 \dd \pi,
	\end{align*}	
	where $\Delta t = t_2 - t_1$.
    Using also \eqref{eq:f-cond-int2}, we obtain
	\begin{align*}
		& \E(X^*_+(t_2) - X^*_+(t_1))^2 \\
		& \leq c m_2(\lambda ) \int_V  \left(
		\int_0^{t_1} (f_2(x,t_1 - s) - f_2(x, t_2-s) )^2 \dd s \right. \\
		& \quad \left. +
		\int_{t_1}^{t_2} (f_1(x) - f_2(x,t_2 -s))^2 \dd s \right) \pi (\dd x) \\
		& \leq 2 c_2  m_2(\lambda ) \int_V \left(
	 	(\Delta t)^2 f_1(x) + (\Delta t)^2 f_1(x)
		\right) \pi (\dd x) \\
		& \leq 4 c_2 (\Delta t)^2 m_2(\lambda ) \int_V f_1 \dd \pi.
	\end{align*}
	The previous bounds hold for any given $t_1, t_2 \in [n, n +1]$, implying they are uniform in $n$. The claim follows by taking $N = 1$, $T = [n, n+1]$, $\gamma = 2$, $p = 2$, and $\theta < 1/2$ in \cite[Theorem 4.3]{khoshnevisan2009}. 
\end{proof}

We need the following statement from \cite{grahovac2025}.

\begin{lemma}{\cite[Lemma 15]{grahovac2025})} \label{lemma:supY}
	Assume that for a process $\{Y(t), \, t \geq 0 \}$
	\begin{equation*}
		\sup_{n \in \N} \E \left(\sup_{t \in [n, n+1]} |Y(t)  - Y(n)| \right)^2 < \infty,
	\end{equation*}
	and for an increasing function $a(t)$, $t \geq 0$, such that
	$\sum_{n \in \N} a(n)^{-2} < \infty$, and 
    $\lim_{n \to \infty} \tfrac{a(n)}{a(n+1)} = 1$, we have for some $c \geq 0$ that $\lim_{n \to \infty} \tfrac{|Y(n)|}{a(n)} \leq c$ a.s. 
	Then 
    \[
    \limsup_{t \to \infty} \tfrac{|Y(t)|}{a(t)} \leq  c \quad \textrm{a.s.}
    \]
\end{lemma}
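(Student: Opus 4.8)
The plan is to reduce the continuous-time $\limsup$ to the discrete-time hypothesis by controlling the oscillation of $Y$ over unit intervals, and to do the latter with Chebyshev's inequality plus the first Borel--Cantelli lemma. Concretely, I would set
\[
M_n \coloneqq \sup_{t \in [n, n+1]} |Y(t) - Y(n)|, \qquad K \coloneqq \sup_{n \in \N} \E M_n^2 < \infty,
\]
the finiteness of $K$ being precisely the standing assumption (and measurability of $M_n$ being guaranteed by working with the continuous modifications, as in Lemma \ref{lemma:delta-t}). For $t \in [n, n+1]$, since $a$ is increasing we have $a(t) \ge a(n)$, so the triangle inequality gives
\[
\frac{|Y(t)|}{a(t)} \le \frac{|Y(n)| + |Y(t) - Y(n)|}{a(n)} \le \frac{|Y(n)|}{a(n)} + \frac{M_n}{a(n)}.
\]
Hence $\sup_{t \in [n, n+1]} |Y(t)|/a(t) \le |Y(n)|/a(n) + M_n/a(n)$, and since the intervals $[n,n+1]$, $n \in \N$, cover $[0,\infty)$, it suffices to show $M_n/a(n) \to 0$ a.s.: then taking $\limsup_{n \to \infty}$ and invoking the hypothesis $\lim_{n} |Y(n)|/a(n) \le c$ yields the conclusion.

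For the claim $M_n/a(n) \to 0$ a.s., I would fix $\varepsilon > 0$ and estimate, by Chebyshev's inequality, $\P(M_n > \varepsilon a(n)) \le \E M_n^2 / (\varepsilon^2 a(n)^2) \le K \varepsilon^{-2} a(n)^{-2}$. Since $\sum_{n \in \N} a(n)^{-2} < \infty$, the right-hand side is summable in $n$, so the first Borel--Cantelli lemma gives that almost surely $M_n \le \varepsilon a(n)$ for all but finitely many $n$, i.e.\ $\limsup_{n} M_n/a(n) \le \varepsilon$ a.s. Intersecting over a sequence $\varepsilon \downarrow 0$ produces a single full-measure event on which $M_n/a(n) \to 0$, which completes the argument. (Note that the bound $a(t) \ge a(n)$ already builds in everything needed on the half-line, so the ratio condition $a(n)/a(n+1) \to 1$ is not required here; it is harmless and presumably kept only for convenience in applications where $a(t) = t^{1/\gamma}\log t$ or $\sqrt{2t\log\log t}$.)

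I do not expect any genuine obstacle in this lemma: it is an elementary deterministic-plus-Borel--Cantelli wrapper, and the only subtlety worth a sentence is the measurability of the suprema, handled by passing to continuous modifications. The analytic weight sits entirely in the \emph{hypothesis}, i.e.\ in Lemma \ref{lemma:delta-t}, whose proof must produce the uniform-in-$n$ second-moment oscillation bound from the moment estimate of \cite[Theorem 1]{marinelli2014} together with the Kolmogorov--Chentsov-type criterion of \cite{khoshnevisan2009}; once that is in hand, the present statement follows as above.
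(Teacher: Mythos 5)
Your proof is correct and is the expected argument: the paper itself gives no proof of this lemma, importing it verbatim from \cite[Lemma 15]{grahovac2025}, where the argument is precisely this Chebyshev--Borel--Cantelli control of the unit-interval oscillation $M_n$ (using $\sum_n a(n)^{-2}<\infty$) followed by the triangle inequality. Your side remark is also accurate: with the decomposition anchored at $Y(n)$ and the bound $a(t)\geq a(n)$ for $t\in[n,n+1]$, the condition $a(n)/a(n+1)\to 1$ is not actually used, and it would only be needed if one instead compared against $|Y(n+1)|/a(n)$.
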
 

\medskip

For $\kappa \in (0, 1]$, define 
\begin{equation} \label{eq:S-sets}
\begin{split}
	& A(t; \kappa) \\
    & = \left\{
	(x, s, z) \colon z (f_1(x) - f_2(x, t- s)) \leq t^\kappa
	, \, s \in (0, t], \, x \in V, \, z \in (0,1] \right\}, \\
    & B(t; \kappa) \\
    & = \left\{
	(x, s, z) \colon z (f_2(x, -s) - f_2(x, t - s)) \leq t^\kappa 	
	, \, s \leq 0, \, x \in V, \, z \in (0, 1] \right\}.
\end{split}
\end{equation}
These sets are used to define the truncated processes, namely 
\begin{align*}
	&X^*_{+, <, \kappa} (t) \coloneqq  \int_{A(t; \kappa)}
	z (f_1(x) - f_2(x, t - s)) (\mu - \nu )(\dd x, \dd s, \dd z), \\
    &X^*_{-, <, \kappa} (t) \coloneqq  \int_{B(t; \kappa)} 
	z (f_2(x, -s) - f_2(x, t - s)) (\mu - \nu )(\dd x, \dd s, \dd z),
\end{align*}
and
\begin{align*}
	&X^*_{+, >, \kappa} (t) \coloneqq  \int_{A(t; \kappa)^c}
	z (f_1(x) - f_2(x, t - s)) (\mu - \nu )(\dd x, \dd s, \dd z), \\
    &X^*_{-, >, \kappa} (t) \coloneqq  \int_{B(t; \kappa)^c} 
	z (f_2(x, -s) - f_2(x, t - s)) (\mu - \nu )(\dd x, \dd s, \dd z),
\end{align*}
where the complements are taken with respect to 
$V \times (0, t] \times (0,1]$ and $V \times (-\infty, 0] \times (0,1]$, respectively. Note that all of the above processes depend on the choice of $\kappa$. For notational convenience, we will occasionally write $X^*_{\pm, <}$ and $X^*_{\pm, >}$ to refer to both processes simultaneously. 

Recall the constants $\alpha$, $\beta$ introduced before 
Theorem \ref{tm:infinite-variation}.
In the following, we will rely on the 
following inequalities involving $\alpha$ and $\beta$:
\begin{align*}
	&\int_{(\tau, 1]} z^u \lambda (\dd z) \leq \tau^{u - \beta} m_{\beta}(\lambda ), 
	 \quad \tau \in (0,1], \, u \leq \beta, \\
	&\int_{(0, \tau]} z^u \lambda (\dd z)  \leq \tau^{u - \beta} m_{\beta}(\lambda ), 
	 \quad \tau \in (0,1], \, u \geq \beta, \\
	&\int_{f_1^{-1} ( (1,T] )} f_1^v \, \dd \pi  \leq 
	T^{v-1-\alpha} \int_{f_1^{-1}((1, \infty))} f_1^{1+\alpha} \, \dd \pi,
	 \quad T > 1, \, v \geq 1 + \alpha, \\
	&\int_{f_1^{-1}( (T, \infty) )} f_1^v \, \dd  \pi \leq 
	T^{v - 1 - \alpha } \int_{f_1^{-1}((1, \infty))} f_1^{1+\alpha} \, \dd \pi, 
	 \quad T > 1, \, v \leq 1 + \alpha.	
\end{align*}

\begin{lemma} \label{lemma:log-theta}
	If Assumption \ref{assum-2} holds, then for $|\theta| \leq 4t^{-\kappa}$
	\begin{equation*} 
		\log \E e^{\theta X^*_{-, <, \kappa}(t)} \leq
		\begin{cases}
		c \theta^2 t^{\kappa(2 - \beta) + \beta - \alpha}, & \alpha \leq \beta, \\
		c \theta^2 t^{\kappa(2-\alpha)_+}, & \alpha \geq \beta.		
		\end{cases}
	\end{equation*}
	and 
	\begin{equation*}
		\log \E e^{\theta X^*_{+, <, \kappa}(t)} \leq 
		\begin{cases}
		c \theta^2 t^{\kappa(2- \beta) + \beta - \alpha}, & \alpha \leq \beta -1, \\
            c \theta^2 t^{1 + \kappa(1- \alpha)_+}, & \alpha \geq \beta -1.
		\end{cases}
	\end{equation*}
\end{lemma}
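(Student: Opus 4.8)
The strategy is to evaluate the cumulant generating functions through the exponential formula for integrals against a compensated Poisson random measure, and then to reduce the bound to an $L^2$-estimate, which in turn is computed by splitting $V$ into three regions and invoking Assumption~\ref{assum-2} together with the moment inequalities listed just before the lemma. Throughout we are in the sub-case where $\lambda$ is supported on $(0,1]$ and, since $\int_{(0,1]}|z|\lambda(\dd z)=\infty$, the Blumenthal--Getoor exponent satisfies $\beta\in[1,2]$; hence $m_\beta(\lambda)<\infty$ and $p=\beta$ is admissible in Assumption~\ref{assum-2}. It is enough to argue for $t\ge 1$.

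Recall that $X^*_{\pm,<,\kappa}(t)$ integrates a nonnegative integrand against $\mu-\nu$ and that, by the truncation in \eqref{eq:S-sets}, this integrand does not exceed $t^\kappa$. Hence for $|\theta|\le 4t^{-\kappa}$ the argument $\theta z(f_2(x,-s)-f_2(x,t-s))$ (respectively $\theta z(f_1(x)-f_2(x,t-s))$) lies in $[-4,4]$ on the relevant set; since $e^y-1-y\le c\,y^2$ for $|y|\le 4$, the exponential formula $\log\E e^{\theta Y}=\int(e^{\theta g}-1-\theta g)\,\dd\nu$ for $Y=\int g\,\dd(\mu-\nu)$ gives
\begin{equation*}
	\log\E e^{\theta X^*_{-,<,\kappa}(t)}\le c\,\theta^2\int_{B(t;\kappa)} z^2\bigl(f_2(x,-s)-f_2(x,t-s)\bigr)^2\,\nu(\dd x,\dd s,\dd z),
\end{equation*}
and likewise for $X^*_{+,<,\kappa}(t)$; the right-hand integrals are finite for fixed $t$, so the manipulation is legitimate. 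It then remains to bound these two second moments.

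For $X^*_{-,<,\kappa}$ I would put $u=-s$ and $\Delta(x,u)=f_2(x,u)-f_2(x,t+u)\le f_1(x)$; the truncation forces $z\Delta(x,u)\le t^\kappa$. Integrating $z$ over $(0,1]$ using $\int_{(0,\tau]}z^2\lambda(\dd z)\le\tau^{2-\beta}m_\beta(\lambda)$ yields $\int_{(0,1]}z^2\Delta^2\bone(z\Delta\le t^\kappa)\,\lambda(\dd z)\le m_\beta(\lambda)\,\Delta^\beta(\Delta\wedge t^\kappa)^{2-\beta}\le m_\beta(\lambda)\,\Delta^\beta(f_1(x)\wedge t^\kappa)^{2-\beta}$. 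Factoring out $(f_1(x)\wedge t^\kappa)^{2-\beta}$ and applying \eqref{eq:f-cond-int1} with $p=\beta$, namely $\int_0^\infty\Delta(x,u)^\beta\,\dd u\le c_\beta(t\wedge f_1(x))^\beta f_1(x)$, one arrives at
\begin{equation*}
	\log\E e^{\theta X^*_{-,<,\kappa}(t)}\le c\,\theta^2 m_\beta(\lambda)\int_V (f_1(x)\wedge t^\kappa)^{2-\beta}(t\wedge f_1(x))^\beta f_1(x)\,\pi(\dd x).
\end{equation*}
The same computation for $X^*_{+,<,\kappa}$, now with $\Delta(x,u)=f_1(x)-f_2(x,u)$ and \eqref{eq:f-cond-int2} in place of \eqref{eq:f-cond-int1}, replaces the factor $(t\wedge f_1(x))^\beta f_1(x)$ by $(t\wedge f_1(x))^\beta t$.

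Finally I would estimate the remaining $\pi$-integral by splitting $V$ into $\{f_1\le t^\kappa\}$, $\{t^\kappa<f_1\le t\}$ and $\{f_1>t\}$ (here $t^\kappa\ge1$). On each piece the minima resolve explicitly, leaving integrals of powers of $f_1$, which are controlled by the four inequalities involving $\alpha$ and $\beta$ stated before the lemma — in particular by using $f_1^v\le f_1^{1+\alpha}(t^\kappa)^{v-1-\alpha}$ whenever $f_1>t^\kappa$ and $v\le 1+\alpha$, to pull out the sharp power of $t$. Collecting the three contributions and using $\kappa\le1$ shows that a single term dominates, and identifying it yields the dichotomies in the statement: for $X^*_-$ the exponent produced by $\{f_1\le t^\kappa\}$ is $\kappa(2-\alpha)_+$ and by $\{f_1>t^\kappa\}$ is $\kappa(2-\beta)+\beta-\alpha$, and a short computation of the form $(\beta-\alpha)(1-\kappa)\ge 0$ shows the case split is governed by the sign of $\beta-\alpha$; for $X^*_+$ the extra factor $t$ shifts the balance so that the relevant threshold becomes $\alpha$ versus $\beta-1$, with dominant exponents $\kappa(2-\beta)+\beta-\alpha$ and $1+\kappa(1-\alpha)_+$. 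This last exponent bookkeeping — verifying case by case that the dominant power of $t$ is exactly the one claimed — is the only genuinely delicate point; everything upstream is a mechanical application of the exponential formula, the truncation bound, and Assumption~\ref{assum-2}.
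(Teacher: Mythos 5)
Your proposal is correct and follows essentially the same route as the paper: the exponential formula for the compensated Poisson integral, the elementary bound $e^u-1-u\le c\,u^2$ on $|u|\le 4$ enabled by the truncation at $t^\kappa$, and then Assumption~\ref{assum-2} together with the moment inequalities for $\alpha$ and $\beta$. The only (harmless) difference is organizational: you keep a single exponent $p=\beta$ in both the $z$-integration and Assumption~\ref{assum-2} and defer the case analysis to a three-way splitting of $V$ according to the size of $f_1$, whereas the paper chooses the exponent case by case ($\beta$, $\alpha$, $2$, or $1+\alpha$) at the outset; your exponent bookkeeping, governed by the signs of $(\beta-\alpha)(1-\kappa)$ and $(\beta-1-\alpha)(1-\kappa)$, reproduces exactly the stated dichotomies.
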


\begin{proof}
	In the proofs, we suppress the lower index $\kappa$ from the notation in $X^{*}_{\pm, <, \kappa}$. The moment generating functions are
	\begin{equation*}
		\E e^{\theta X^*_{\pm, <}(t)} = \exp \left(
		\int \left(e^{\theta y} - 1 - \theta y \right) \Pi_t^{\pm} (\dd y)
		\right),
	\end{equation*}
	with the L\'evy measures given by
	\begin{align*}
		& \Pi_t^+(B) \coloneqq  \nu \left( 
		\{(x, s, z) \in A(t; \kappa) \colon
		z \left(f_1(x) - f_2(x, t-s) \right) \in B \}
		\right), \\
        & \Pi_t^-(B) \coloneqq  \nu \left( 
		\{(x, s, z) \in B(t; \kappa) \colon  
		z \left(f_2(x, -s) - f_2(x, t-s)\right) \in B \}
		\right), 		
	\end{align*}
	for $B \in \mathcal{B}(\R)$. Clearly, $\Pi_t^{\pm} ((t^{\kappa}, \infty)) = 0$. For $|u| \leq 4$, we have the elementary inequality $e^{u} - 1 - u \leq 4u^2$, from which it follows that 
	\begin{equation*}
		\log \E e^{\theta X^*_{\pm, <}(t)} \leq 
		4 \theta^2 \int y^2 \Pi_t^{\pm} (\dd y), \quad |\theta| \leq 4t^{-\kappa}.
	\end{equation*}
	
	We start with $X^*_{-, <}(t)$. If $\beta \geq \alpha$, Assumption \ref{assum-2} gives
	\begin{align*}
		& \log \E e^{\theta X^*_{-, <}(t)} \\
        &\leq 
		4\theta^2 \int_{-\infty}^0 \int_V \int_{(0, 1]} 
		z^2 \left(f_2(x, -s) - f_2(x, t-s)\right)^2 
		\bone(B(t; \kappa)) 
		\lambda (\dd z) \pi (\dd x) \dd s \\ 
		&\leq  4 \theta^2 m_{\beta}(\lambda ) t^{\kappa(2- \beta)} 
		\int_V \int_{- \infty}^0 \left(f_2(x, -s) - f_2(x, t-s)\right)^\beta 
		\dd s \pi (\dd x) \\ 
		&\leq 4 \theta^2 m_{\beta}(\lambda ) t^{\kappa(2- \beta)} 
		\int_V c_\beta (t \wedge f_1(x))^\beta f_1(x) \pi (\dd x) \\ 
		& \leq 4 \theta^2 m_{\beta}(\lambda ) c_\beta t^{\kappa(2- \beta) + \beta - \alpha}
		\int_V  f_1^{1 + \alpha} \dd \pi .  
	\end{align*}
	For $\beta \leq \alpha \leq 2$, we have
	\begin{align*}
		& \log \E e^{\theta X^*_{-, <}(t)}  \\
		&\leq  4 \theta^2 m_{\alpha}(\lambda ) t^{\kappa(2- \alpha)} 
		\int_V \int_{- \infty}^0 \left(f_2(x, -s) - f_2(x, t-s)\right)^\alpha 
		\dd s \pi (\dd x) \\ 
		&\leq 4 \theta^2 m_{\alpha}(\lambda )  c_\alpha t^{\kappa(2- \alpha)} 
		\int_V f_1^{1 + \alpha} \dd \pi.  
	\end{align*}
	In the case $\alpha \geq 2$, the bound is straightforward
	\begin{align*}
		\log \E e^{\theta X^*_{-, <}(t)}  
		&\leq  4 \theta^2 m_{2}(\lambda ) 
		\int_V \int_{- \infty}^0 \left(f_2(x, -s) - f_2(x, t-s)\right)^2 
		\dd s \pi (\dd x) \\ 
		&\leq 4  \theta^2  m_{2}(\lambda ) c_2 
		\int_V f_1^{3} \dd \pi.
	\end{align*}
	
	For $X^*_{+, <}(t)$, we have 
	\begin{align*}
		& \log \E e^{\theta X^*_{+, <}(t)} \\
        &\leq 
		4\theta^2 \int_0^t \int_V \int_{(0, 1]} 
		z^2 \left(f_1(x) - f_2(x, t-s)\right)^2 
		\bone(A(t; \kappa)) 
		\lambda (\dd z) \pi (\dd x) \dd s.
	\end{align*}
	We again divide the analysis into separate cases. If $\beta \leq 1+\alpha$ and $\alpha \leq 1$, we have by \eqref{eq:f-cond-int2}
	\begin{align*}
		\log \E e^{\theta X^*_{+, <}(t)} &\leq 
		4 \theta^2 m_{1+\alpha}(\lambda ) t^{\kappa(1-\alpha)} \int_V \int_0^t 
		\left(f_1(x) - f_2(x, t-s)\right)^{1+\alpha} \dd s \pi (\dd x)  \\ 
		&\leq 4 \theta^2 m_{1+\alpha}(\lambda ) c_{1+\alpha} t^{\kappa(1-\alpha) + 1} 
		\int_V f_1^{1+\alpha}  \dd \pi, 
	\end{align*}
	while for $\alpha \geq 1$
		\begin{equation*}
		\log \E e^{\theta X^*_{+, <}(t)} \leq 
		4 \theta^2 m_{2}(\lambda ) c_2
		t \int_V f_1^2 \dd \pi. 
		\end{equation*}	
	Finally, if $ \alpha \leq \beta - 1$ and $\alpha \leq  1$, Assumption \ref{assum-2} gives
	\begin{align*}
		\log \E e^{\theta X^*_{+, <}(t)} &\leq  
		4 \theta^2 	m_{\beta}(\lambda ) t^{\kappa(2-\beta)} 
		\int_V \int_0^t (f_1(x) - f_2(x, t-s))^\beta 
		\dd s \pi (\dd x) \\ 
		& \leq 4 \theta^2 m_{\beta}(\lambda ) c_\beta t^{\kappa(2-\beta)} 
		\int_V t^{\beta - \alpha} f_1^{\alpha + 1} \dd \pi, 
	\end{align*} 
	as claimed.
\end{proof}

The previous bound combined with Markov's inequality, the first Borel--Cantelli 
lemma and Lemmas \ref{lemma:delta-t} and \ref{lemma:supY} imply the following.
The proof is similar to the proof of \cite[Corollary 17]{grahovac2025}.

\begin{cor} \label{cor:kappa-bound}
	Let $\kappa \in [1/2, 1]$ be such that
	\begin{equation*}
		\kappa \geq
		\begin{cases}
			1 - \frac{\alpha}{\beta}, &  \alpha \leq \beta -1, \\ 
            \frac{1}{1+\alpha}, & \beta -1 \leq \alpha \leq 1, \\
			\frac{1}{2}, & \alpha \geq 1.		 	
		\end{cases}
	\end{equation*}	
	If Assumption \ref{assum-2} holds, then
	\begin{equation*}
		\limsup_{t \to \infty} \frac{|X^*_{\pm, <, \kappa}(t)|}{t^{\kappa}\log t} \leq \frac{1}{4} \quad \text{a.s.}
	\end{equation*}
\end{cor}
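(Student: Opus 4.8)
Fix $\kappa$ in the stated range and abbreviate $Y_{\pm}(t):=X^*_{\pm,<,\kappa}(t)$. The argument has three stages, as the paragraph preceding the statement announces: (i) turn the cumulant bound of Lemma \ref{lemma:log-theta} into an exponential (Chernoff) tail bound at integer times; (ii) use the first Borel--Cantelli lemma to get the $\limsup$ bound along $\N$; (iii) invoke Lemma \ref{lemma:supY} to pass from $\N$ to all $t$, the smoothness input being Lemma \ref{lemma:delta-t}. The whole point is the matching of exponents, so I would do that first. Writing $\log\E e^{\theta Y_{\pm}(t)}\le c\,\theta^{2}t^{a_{\pm}}$ for $|\theta|\le 4t^{-\kappa}$, Lemma \ref{lemma:log-theta} gives $a_{-}\in\{\kappa(2-\beta)+\beta-\alpha,\ \kappa(2-\alpha)_{+}\}$ and $a_{+}\in\{\kappa(2-\beta)+\beta-\alpha,\ 1+\kappa(1-\alpha)_{+}\}$, the choice depending on the position of $\alpha$ relative to $\beta-1$ and $\beta$. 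A short case check, using $1\le\beta\le2$ (since $\int_{|z|\le1}|z|\lambda(\dd z)=\infty$) and $\beta\le\alpha+1$ in the middle regime, shows that under the three imposed lower bounds on $\kappa$ one has, in every case, $a_{\pm}\le 2\kappa$, with equality exactly at the boundary values: for $\alpha\le\beta-1$ and $\kappa=1-\alpha/\beta$ both candidate exponents equal $\kappa(2-\beta)+\beta-\alpha=2(\beta-\alpha)/\beta=2\kappa$; for $\beta-1\le\alpha\le1$ and $\kappa=(1+\alpha)^{-1}$ one gets $1+\kappa(1-\alpha)=2(1+\alpha)^{-1}=2\kappa$; for $\alpha\ge1$ and $\kappa=1/2$ one gets $1=2\kappa$; the remaining sub-cases carry strict slack.

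\textbf{Step 2 (Chernoff and Borel--Cantelli).} Fix $\varepsilon>0$ and put $a(t)=t^{\kappa}\log t$. For $n\in\N$ apply Markov's inequality with $\theta=\pm 4n^{-\kappa}$ in Lemma \ref{lemma:log-theta} (the boundary value $|\theta|=4n^{-\kappa}$ is allowed):
\[
\P\!\big(|Y_{\pm}(n)|>(\tfrac14+\varepsilon)\,a(n)\big)\le 2\exp\!\Big(-4n^{-\kappa}(\tfrac14+\varepsilon)n^{\kappa}\log n+16c\,n^{a_{\pm}-2\kappa}\Big)\le 2e^{16c}\,n^{-(1+4\varepsilon)},
\]
where we used $a_{\pm}\le 2\kappa$ and $n\ge1$ to bound $n^{a_{\pm}-2\kappa}\le1$. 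The right-hand side is summable in $n$, so the first Borel--Cantelli lemma yields $\limsup_{n\to\infty}|Y_{\pm}(n)|/a(n)\le\tfrac14+\varepsilon$ a.s.; letting $\varepsilon\downarrow0$ along a sequence gives $\limsup_{n\to\infty}|Y_{\pm}(n)|/a(n)\le\tfrac14$ a.s.

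\textbf{Step 3 (from $\N$ to $[0,\infty)$).} The function $a(t)=t^{\kappa}\log t$ is increasing on $[1,\infty)$, satisfies $a(n)/a(n+1)\to1$, and since $\kappa\ge1/2$ we have $\sum_{n}a(n)^{-2}=\sum_{n}n^{-2\kappa}(\log n)^{-2}<\infty$. The oscillation hypothesis $\sup_{n}\E\big(\sup_{t\in[n,n+1]}|Y_{\pm}(t)-Y_{\pm}(n)|\big)^{2}<\infty$ required by Lemma \ref{lemma:supY} follows, as in \cite[Corollary~17]{grahovac2025}, from Lemma \ref{lemma:delta-t} (the continuous modification of $X^{*}_{\pm}$ there has $L^{2}$-oscillation on $[n,n+1]$ bounded uniformly in $n$) combined with the second-moment estimates of Assumption \ref{assum-2}, which control the complementary large-jump piece $X^{*}_{\pm,>,\kappa}=X^{*}_{\pm}-Y_{\pm}$ on $[n,n+1]$ uniformly in $n$; the additional $t$-dependent truncation only shrinks the relevant $L^{2}$-norms of increments. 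Lemma \ref{lemma:supY} applied with $Y=Y_{\pm}$, $a(t)=t^{\kappa}\log t$ and $c=\tfrac14$ then gives $\limsup_{t\to\infty}|X^{*}_{\pm,<,\kappa}(t)|/(t^{\kappa}\log t)\le\tfrac14$ a.s., as claimed.

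\textbf{Main obstacle.} The crux is Step 1: verifying that \emph{every} exponent produced by Lemma \ref{lemma:log-theta} stays below $2\kappa$ under the stated lower bounds on $\kappa$, which is precisely what lets the linear term $\theta y$ dominate the quadratic term $c\theta^{2}t^{a}$ in the Chernoff estimate and produce a summable tail. A secondary technical point is checking the oscillation hypothesis of Lemma \ref{lemma:supY} for the $t$-dependently truncated processes $X^{*}_{\pm,<,\kappa}$ rather than for $X^{*}_{\pm}$, where the moving truncation region $A(t;\kappa)$, $B(t;\kappa)$ must be handled.
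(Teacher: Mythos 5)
Your proposal is correct and follows essentially the same route as the paper, which proves this corollary in one line by combining Lemma \ref{lemma:log-theta} with Markov's inequality, the first Borel--Cantelli lemma, and Lemmas \ref{lemma:delta-t} and \ref{lemma:supY}; your Step 1 exponent check ($a_{\pm}\le 2\kappa$ in every regime, with equality exactly at the boundary values of $\kappa$) is precisely the computation the paper leaves implicit, and it is correct. The only point where you are no more rigorous than the paper is the verification of the oscillation hypothesis of Lemma \ref{lemma:supY} for the $t$-dependently truncated processes $X^*_{\pm,<,\kappa}$ rather than for $X^*_{\pm}$, which both you and the paper dispose of by reference to \cite[Corollary 17]{grahovac2025}.
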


Recall the definition \eqref{eq:S-sets}.

\begin{lemma} \label{lemma:mu-comp}
	If for some $\gamma \geq 1$ 
	\begin{equation*}
		\int_V \int_{(0, 1]} z^\gamma f_1(x)^\gamma \bone(z f_1(x) > 1) \lambda (\dd z) \pi (\dd x) < \infty,
	\end{equation*}
	then, for all large enough $t$, $\mu(A (t; 1/\gamma)^c) = 0$ and 
    $\mu(B(t;1/\gamma)^c) = 0$ a.s.
\end{lemma}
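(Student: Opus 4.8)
The plan is to show that, almost surely, only finitely many points of the Poisson random measure $\mu$ ever enter one of the ``bad'' sets, so that beyond a random time neither set contains any point of $\mu$. First I would reduce the claim to a statement about intensities. For $\mathcal{C}\in\{A,B\}$ let $\mathcal{E}^{\mathcal{C}}_T$ denote the set of configurations $(x,s,z)$, $z\in(0,1]$, that lie in $\mathcal{C}(t;1/\gamma)^c$ for \emph{some} $t\ge T$. These sets decrease in $T$, hence so do the events $\{\mu(\mathcal{E}^{\mathcal{C}}_T)>0\}$, and for integer $T=n$ the intersection $\bigcap_n\{\mu(\mathcal{E}^{\mathcal{C}}_n)>0\}$ contains the event that $\mu(\mathcal{C}(t;1/\gamma)^c)>0$ for arbitrarily large $t$. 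Since $\P(\mu(\mathcal{E}^{\mathcal{C}}_T)>0)\le\nu(\mathcal{E}^{\mathcal{C}}_T)$ whenever the latter is finite, it suffices to prove
\[
\nu(\mathcal{E}^{A}_T)\longrightarrow 0\qquad\text{and}\qquad \nu(\mathcal{E}^{B}_T)\longrightarrow 0\qquad(T\to\infty).
\]
I assume $\gamma>1$ throughout; the borderline case $\gamma=1$ requires only minor changes and is omitted here.

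Bounding $\nu(\mathcal{E}^A_T)$ is immediate from the hypothesis. If $(x,s,z)$ with $0<s\le t$, $z\le 1$ lies in $A(t;1/\gamma)^c$, then $zf_1(x)\ge z\big(f_1(x)-f_2(x,t-s)\big)>t^{1/\gamma}$; hence $zf_1(x)>T^{1/\gamma}$ and $0<s<(zf_1(x))^\gamma$, so that
\[
\nu(\mathcal{E}^A_T)\le\int_V\int_{(0,1]}(zf_1(x))^\gamma\,\bone\big(zf_1(x)>T^{1/\gamma}\big)\,\lambda(\dd z)\,\pi(\dd x).
\]
For $T\ge 1$ the integrand is dominated by $(zf_1(x))^\gamma\bone(zf_1(x)>1)$, which is integrable by assumption, and it vanishes pointwise; dominated convergence gives $\nu(\mathcal{E}^A_T)\to 0$.

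The estimate of $\nu(\mathcal{E}^B_T)$ is the crux. Here $s$ ranges over $(-\infty,0]$ and the naive inequality $zf_2(x,-s)>T^{1/\gamma}$ only confines $-s$ to $(0,f_2^{\leftarrow}(x,T^{1/\gamma}/z))$, which is too weak. Instead I would exploit that for large $-s$ the \emph{accessible} increment $f_2(x,-s)-f_2(x,t-s)$ is small, and organise the time variable \emph{dyadically}: if $(x,s,z)\in B(t;1/\gamma)^c$ with $t\in[2^j,2^{j+1})$, then $f_2(x,2^{j+1}-s)\le f_2(x,t-s)$ and $2^{j/\gamma}\le t^{1/\gamma}$ give $z\big(f_2(x,-s)-f_2(x,2^{j+1}-s)\big)>2^{j/\gamma}$, so with $j_0=\lfloor\log_2 T\rfloor$,
\[
\bone_{\mathcal{E}^B_T}(x,s,z)\ \le\ \sum_{j\ge j_0}\bone\Big[z\big(f_2(x,-s)-f_2(x,2^{j+1}-s)\big)>2^{j/\gamma}\Big].
\]
Writing $u=-s$ and applying Markov's inequality with exponent $\gamma$ followed by \eqref{eq:f-cond-int1} of Assumption~\ref{assum-2} with parameter $2^{j+1}$, the Lebesgue measure in $s$ of the $j$-th event is at most $c_\gamma\,2^{-j}z^\gamma(2^{j+1}\wedge f_1(x))^\gamma f_1(x)$, and it is zero unless $zf_1(x)>2^{j/\gamma}$. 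Summing the (for $\gamma>1$, geometric) series over $j$ — separately over the levels $2^{j+1}\le f_1(x)$ and $2^{j+1}>f_1(x)$, each sum being controlled by its extreme level — one gets $\sum_{j\ge j_0}2^{-j}(2^{j+1}\wedge f_1(x))^\gamma\le c_\gamma f_1(x)^{\gamma-1}$, with the whole series empty unless $(zf_1(x))^\gamma\gtrsim 2^{j_0}\gtrsim T$. After multiplying by $z^\gamma f_1(x)$ this collapses to $c_\gamma(zf_1(x))^\gamma$, whence
\[
\nu(\mathcal{E}^B_T)\ \le\ c_\gamma\int_V\int_{(0,1]}(zf_1(x))^\gamma\,\bone\big(zf_1(x)>cT^{1/\gamma}\big)\,\lambda(\dd z)\,\pi(\dd x)\ \longrightarrow\ 0
\]
by dominated convergence, exactly as for $\mathcal{E}^A_T$, which completes the argument.

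The main obstacle is precisely this control of $\nu(\mathcal{E}^B_T)$. Both obvious simplifications — retaining only $zf_2(x,-s)>T^{1/\gamma}$, or decomposing the time interval in unit rather than dyadic steps — lead to bounds carrying a spurious extra factor $f_1(x)$, i.e.\ an integrand of the form $(zf_1(x))^\gamma f_1(x)$, which is not controlled by the assumed integrability. The dyadic covering avoids the over-counting of overlapping time ranges and, combined with \eqref{eq:f-cond-int1}, removes this loss; the remaining steps (the reduction via decreasing events, the $\mathcal{E}^A_T$ bound, and the two dominated-convergence passages) are routine.
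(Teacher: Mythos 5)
Your argument takes a genuinely different route from the paper's. The paper partitions the arrival time $s$ into unit slabs, defines sets $S_\pm^n$ of points in the $n$-th slab violating the threshold, bounds $\nu(S_\pm^n)\le\int_V\int_{(0,1]}\bone(zf_1(x)>n^{1/\gamma})\,\lambda(\dd z)\pi(\dd x)$ using only the crude inequality $f_2(x,-s)-f_2(x,n-s)\le f_1(x)$, sums over $n$ and applies Borel--Cantelli; it needs neither Assumption~\ref{assum-2} nor any case distinction in $\gamma$. Your reduction to showing $\nu(\mathcal{E}^{\mathcal{C}}_T)\to0$, your bound for $\nu(\mathcal{E}^{A}_T)$ (which reproduces the paper's computation for the $S_+^n$), and your dyadic-in-$t$ treatment of $\nu(\mathcal{E}^{B}_T)$ via Markov's inequality and \eqref{eq:f-cond-int1} are all correct for $\gamma\in(1,2]$: the unconstrained sum $\sum_{j}2^{-j}(2^{j+1}\wedge f_1(x))^\gamma\le c_\gamma f_1(x)^{\gamma-1}$ does absorb the trailing $f_1(x)$ coming from Assumption~\ref{assum-2}, and you correctly diagnose why unit steps in $t$ would leave a spurious factor. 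You have also rightly identified the past contribution as the delicate part; the paper's own justification there is extremely terse. The price of your route is that it invokes Assumption~\ref{assum-2}, which is \emph{not} among the stated hypotheses of the lemma (it is available wherever the lemma is applied, but you prove a formally weaker statement), and that \eqref{eq:f-cond-int1} requires $p=\gamma\in[1,2]$, while the lemma only assumes $\gamma\ge1$.

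The genuine gap is the case $\gamma=1$, which you set aside as requiring ``only minor changes''. It does not: for $\gamma=1$ the key sum degenerates, since for the levels with $2^{j+1}\le f_1(x)$ each summand $2^{-j}(2^{j+1}\wedge f_1(x))$ equals $2$, so the sum grows like the number of admissible $j$, i.e.\ like a logarithm of $f_1(x)$ (or of $zf_1(x)/T$), rather than like $f_1(x)^{\gamma-1}=1$. Carrying this through, your bound becomes
\begin{equation*}
\nu(\mathcal{E}^{B}_T)\ \le\ c\int_V\int_{(0,1]} zf_1(x)\bigl(1+\log^+(zf_1(x))\bigr)\,\bone\bigl(zf_1(x)>cT\bigr)\,\lambda(\dd z)\,\pi(\dd x),
\end{equation*}
and the dominating function $zf_1(x)\log^+(zf_1(x))$ is not controlled by the assumed finiteness of $\int_V\int_{(0,1]}zf_1(x)\bone(zf_1(x)>1)\lambda(\dd z)\pi(\dd x)$; replacing the Markov exponent by some $p\in(1,2]$ does not remove the logarithm, since the summands in the range $2^{j+1}\le f_1(x)$ remain constant in $j$. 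So at $\gamma=1$ your scheme needs a strictly stronger (logarithmic) moment condition than the lemma assumes, and a different argument --- for instance the paper's slab decomposition in $s$, which avoids summing over time scales altogether --- is required there.
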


\begin{proof}
	Define 
	\begin{align*}
		S_{-}^n &\coloneqq \{(x, s, z) \colon z(f_2(x, -s) - f_2(x, n-s)) > n^{1/\gamma}, 
		\, s \in (-n-1, -n]\}, \\
		S_{+}^n &\coloneqq \{(x, s, z) \colon z(f_1(x) - f_2(x, n-s)) > n^{1/\gamma}, 
		\, s \in  (n, n+1]\}.
	\end{align*}
	Then
	\begin{align*}
		\nu (S_-^n) &= \int_{-n-1}^{-n} \int_V \int_{(0, 1]} 
		\bone(z(f_2(x, -s) - f_2(x, n-s)) > n^{1/\gamma}) 
		\lambda (\dd z) \pi (\dd x) \dd s	\\ 
		&\leq \int_V \int_{(0, 1]} \bone(z f_1(x) > n^{1/\gamma}) \lambda (\dd z)\pi (\dd x ),
 	\end{align*}
 	and an identical bound holds for $\nu (S_+^n)$. Therefore,
	\begin{align*}
		\sum_{n = 1}^{\infty} \nu (S_{\pm}^n) &\leq
		\int_V \int_{(0, 1]} \sum_{n = 1}^{\infty} 
		\bone(n < (zf_1(x))^\gamma) \lambda (\dd z) \pi (\dd x) \\ 
		&\leq \int_V \int_{(0, 1]} (zf_1(x))^\gamma \bone(zf_1(x) > 1) \lambda (\dd z) \pi (\dd x) < \infty.
	\end{align*}
	By the first Borel--Cantelli lemma, $\mu(S_{\pm}^n) = 0$ 
    almost surely for all large enough $n$, and the result follows.	
\end{proof}

\begin{lemma} \label{lemma:compensator}
	Let $\kappa \in (0,1)$ and suppose that Assumption \ref{assum-2} holds. Then, 
    for the positive jumps, we have
	\begin{equation} \label{eq:compensator-plus}
	\begin{split}	
    & \int_{A(t; \kappa)^c} z(f_1(x) - f_2(x, t-s))	\nu (\dd x, \dd s, \dd z) \\
	& \leq 
		\begin{cases}
			c t^{\beta - \alpha + \kappa(1 -\beta)}, & \alpha \leq \beta - 1, \\
            c t^{1 - \kappa\alpha}, & \alpha \geq \beta - 1, 
		\end{cases}
    \end{split}
	\end{equation}
	and 
	\begin{equation} \label{eq:lambda-plus}
		\nu (A(t; \kappa)^c) \leq 
		\begin{cases}
			ct^{\beta - \alpha - \kappa \beta}, & \alpha \leq \beta - 1, \\ 
			ct^{1-\kappa(1+\alpha)}, & \alpha \geq \beta - 1.
		\end{cases}
	\end{equation}	
    For the negative jumps, we have
	\begin{equation} \label{eq:compensator-minus}
	\begin{split}	
    & \int_{B(t; \kappa)^c} 
	z(f_2(x, -s) - f_2(x, t-s))
	\nu (\dd x, \dd s, \dd z) \\
    & \leq 
		\begin{cases}
		c t^{\beta - \alpha + \kappa(1-\beta)}, & \alpha \leq \beta, \\
		c t^{\kappa(1-\alpha)}, & \alpha \geq \beta,
		\end{cases}
	\end{split}
    \end{equation}
	and 
	\begin{equation} \label{eq:lambda-minus}
		\nu (B(t; \kappa)^c) \leq  
		\begin{cases}
		ct^{\beta - \alpha - \kappa \beta}, & \alpha \leq \beta, \\
		ct^{-\kappa \alpha}, & \alpha \geq \beta.
		\end{cases}
	\end{equation}
\end{lemma}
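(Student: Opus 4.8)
The plan is to reduce all six estimates to bounds on two scalar integrals over $V$, and then to obtain those by splitting $V$ at the level $\{f_1 = t\}$ and invoking the tail inequalities for $\lambda$ and $\pi$ listed just before Lemma~\ref{lemma:log-theta}. First I would change variables, $u = t - s$ in the positive-jump sets and $u = -s$ in the negative-jump ones, and abbreviate $h_t(x,u) \coloneqq f_1(x) - f_2(x,u) = \int_0^u f(x,v)\,\dd v$ for $u \in [0,t)$ and $g_t(x,u) \coloneqq f_2(x,u) - f_2(x,t+u)$ for $u \ge 0$, so that $A(t;\kappa)^c$ is the set where $z\,h_t(x,u) > t^\kappa$ and $B(t;\kappa)^c$ the set where $z\,g_t(x,u) > t^\kappa$, both with $z \in (0,1]$. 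Since $\lambda$ is supported on $(0,1]$ and $0 \le h_t, g_t \le f_1(x)$, each defining inequality forces $h_t > t^\kappa$ (resp.\ $g_t > t^\kappa$), hence $f_1(x) > t^\kappa$, and it also makes $t^\kappa/h_t$ (resp.\ $t^\kappa/g_t$) lie in $(0,1)$. Integrating in $z$ first on the set $\{f_1 > t^\kappa\}$, I would use the first inequality in that list in the forms $\int_{(t^\kappa/h,\,1]} z\,\lambda(\dd z) \le m_\beta(\lambda)\,h^{\beta-1} t^{\kappa(1-\beta)}$ (exponent $1 \le \beta$) for the two compensator integrals and $\overline\lambda(t^\kappa/h) \le m_\beta(\lambda)\,h^{\beta} t^{-\kappa\beta}$ (exponent $0$) for the two $\nu$-measures; this is legitimate since $\beta \in [1,2]$ here.

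After the $z$-integration the positive-jump quantities become $t^{1+\kappa(1-\beta)}$, respectively $t^{1-\kappa\beta}$, times $\int_{f_1^{-1}((t^\kappa,\infty))} \big( \int_0^t h_t(x,u)^\beta\,\dd u \big)\,\pi(\dd x)$, and the negative-jump ones become $t^{\kappa(1-\beta)}$, respectively $t^{-\kappa\beta}$, times the analogous integral with $\int_0^\infty g_t(x,u)^\beta\,\dd u$. Now Assumption~\ref{assum-2} enters with $p = \beta$, giving $\int_0^t h_t^\beta\,\dd u \le c_\beta (t \wedge f_1)^\beta\,t$ and $\int_0^\infty g_t^\beta\,\dd u \le c_\beta (t \wedge f_1)^\beta f_1$. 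So everything reduces to estimating $J \coloneqq \int_{f_1^{-1}((t^\kappa,\infty))} (t \wedge f_1)^\beta\,\dd\pi$ in the positive-jump case and $J' \coloneqq \int_{f_1^{-1}((t^\kappa,\infty))} (t \wedge f_1)^\beta f_1\,\dd\pi$ in the negative-jump case.

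To bound $J$ and $J'$ I would split $\{f_1 > t^\kappa\}$ into $\{t^\kappa < f_1 \le t\}$, where $(t \wedge f_1)^\beta = f_1^\beta$, and $\{f_1 > t\}$, where $(t \wedge f_1)^\beta = t^\beta$, and on each piece apply the appropriate $\pi$-tail inequality from the list. For $J$ the power of $f_1$ on the first piece is $\beta$, for which one uses the ``$v \ge 1+\alpha$'' inequality with threshold $t$ when $\alpha < \beta - 1$ and the ``$v \le 1+\alpha$'' inequality with threshold $t^\kappa$ when $\alpha \ge \beta - 1$, while on the second piece the power is $0$; this gives $J \le c\,t^{\kappa(\beta - 1 - \alpha)}$ for $\alpha \ge \beta - 1$ and $J \le c\,t^{\beta - 1 - \alpha}$ for $\alpha < \beta - 1$, with $\kappa < 1$ used to decide which of the two resulting powers of $t$ dominates. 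For $J'$ the powers are $\beta + 1$ and $1$, which moves the dichotomy to $\alpha$ versus $\beta$, and the same argument gives $J' \le c\,t^{\kappa(\beta - \alpha)}$ for $\alpha \ge \beta$ and $J' \le c\,t^{\beta - \alpha}$ for $\alpha < \beta$. Multiplying $J$ back by $t^{1+\kappa(1-\beta)}$ and $t^{1-\kappa\beta}$ reproduces \eqref{eq:compensator-plus} and \eqref{eq:lambda-plus}, and multiplying $J'$ by $t^{\kappa(1-\beta)}$ and $t^{-\kappa\beta}$ reproduces \eqref{eq:compensator-minus} and \eqref{eq:lambda-minus}; the extra factor $t$ rather than $f_1$ from Assumption~\ref{assum-2} in the positive-jump case is precisely what shifts the positive-jump thresholds from $\beta$ to $\beta - 1$.

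Throughout I would take $t$ large enough that $t^\kappa > 1$, so that the $\pi$-tail inequalities (stated for thresholds exceeding $1$) apply with both $t^\kappa$ and $t$, consistent with the ``large $t$'' regime already used in Lemma~\ref{lemma:mu-comp}. Once the reduction to $J$ and $J'$ is in place the remaining work is purely computational; the one spot that needs genuine care, and which I regard as the main obstacle, is the two-region split of $J$ and $J'$ together with the exponent bookkeeping, since it is easy to pick the wrong tail inequality or to mis-track which of the two powers of $t$ is larger. I would handle it by treating the $\alpha$-dichotomy for $J$ and for $J'$ in parallel.
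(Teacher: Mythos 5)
Your proof is correct and reproduces all six bounds with the right exponents, but it executes the estimates somewhat differently from the paper. First, the paper obtains the two $\nu$-measure bounds \eqref{eq:lambda-plus} and \eqref{eq:lambda-minus} in one line, by Markov's inequality from the compensator bounds (divide by the truncation level $t^\kappa$), whereas you re-run the computation with $\overline\lambda(t^\kappa/h)$ in place of $\int_{(t^\kappa/h,1]} z\,\lambda(\dd z)$; both give the same exponents, the paper's shortcut is just leaner. Second, and more substantively, the case split is organized differently: on the event $\{w>t^\kappa\}$ the paper bounds $w\le w^{p}t^{\kappa(1-p)}$ with a case-dependent exponent --- $p=\beta$ when $\alpha\le\beta$ (resp.\ $\alpha\le\beta-1$), but $p=\alpha$ (resp.\ $p=1+\alpha$) in the complementary case --- and then finishes with the single pointwise inequality $(t\wedge f_1)^{p}f_1\le t^{p-\alpha}f_1^{1+\alpha}$ and $f_1\in L^{1+\alpha}(\pi)$, without splitting $V$ at all. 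You instead fix the exponent $\beta$ throughout, observe that the truncation forces $f_1>t^\kappa$, and recover the second regime from the $\pi$-tail inequality at level $t^\kappa$ via the split $\{t^\kappa<f_1\le t\}\cup\{f_1>t\}$. The two routes are equivalent in outcome; yours has the small advantage that Assumption \ref{assum-2} is only ever invoked with $p=\beta\in[1,2]$ (the paper's choices $p=\alpha$ and $p=1+\alpha$ implicitly need $\alpha\le2$, resp.\ $\alpha\le1$, or a capping at $p=2$ as in Lemma \ref{lemma:log-theta}), at the cost of slightly longer bookkeeping and the harmless restriction to $t^\kappa>1$. One cosmetic slip: in the sentence claiming that ``after the $z$-integration'' the positive-jump quantities become $t^{1+\kappa(1-\beta)}$ times $\int\bigl(\int_0^t h_t^\beta\,\dd u\bigr)\pi(\dd x)$, the extra factor $t$ is counted twice --- it comes from Assumption \ref{assum-2}, not from the $z$-integration --- but your subsequent reduction to $J$ and $J'$ and the final multiplication by the prefactors are consistent, so the exponents all check out.
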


\begin{proof}
	The proof follows an approach analogous to that employed in Lemma \ref{lemma:log-theta}. In fact, many of the key steps were already made there and will be directly used here. 
	
	Let $A_-$ and $A_+$ denote the left-hand sides of \eqref{eq:compensator-minus}  and \eqref{eq:compensator-plus}, respectively. For $\alpha \leq \beta$, $A_-$ can be bounded as follows
	\begin{align*}
		A_- &\leq m_{\beta}(\lambda ) t^{\kappa(1-\beta)} 
		\int_V  \int_{-\infty}^0 
		(f_2(x, -s) - f_2(x, t-s))^\beta \dd s \pi (\dd x) \\
		& \leq m_\beta(\lambda) c_\beta t^{\kappa(1-\beta)} 
        \int_V (t \wedge f_1(x))^\beta f_1(x) \pi(\dd x) \\
        &\leq c t^{\kappa(1-\beta) + (\beta - \alpha)} \int_V f_1^{1+\alpha} \dd \pi.
	\end{align*}
	Similarly, in the case $\alpha \geq \beta$, we have 
	\begin{align*}
		A_- &\leq m_{\alpha}(\lambda ) t^{\kappa(1-\alpha)} 
		\int_V  \int_{-\infty}^0 
		(f_2(x, -s) - f_2(x, t-s))^\alpha \dd s \pi (\dd x) \\
		&\leq c t^{\kappa(1-\alpha)},
	\end{align*}
	If $\beta \leq 1 + \alpha$, then by \eqref{eq:f-cond-int2}
	\begin{align*}
		A_+ &\leq m_{1+\alpha}(\lambda ) t^{-\kappa \alpha} 
		\int_V \int_0^t (f_1(x) - f_2(x, t-s))^{1+\alpha} \dd s \pi (\dd x) \\
		&\leq c t^{1- \kappa \alpha}.
	\end{align*}
	Finally, for $\beta > 1 +\alpha$, we have 
	\begin{align*}
		A_+ &\leq m_{\beta}(\lambda ) t^{-\kappa (\beta-1)} \int_V \int_0^t 
		(f_1(x) - f_2(x, t-s))^{\beta} \dd s \pi (\dd x) \\
		&\leq c t^{\beta - \alpha - \kappa (\beta-1)}
	\end{align*}
    The bounds \eqref{eq:lambda-plus} and \eqref{eq:lambda-minus} follows from 
    \eqref{eq:compensator-plus} and \eqref{eq:compensator-minus} combined with a 
    Markov inequality.
\end{proof}

\begin{remark}
	We note that the bounds in \cite[Lemma 19]{grahovac2025} could be improved as in the previous lemma. However, the weaker version does not compromise the proofs there.
\end{remark}

\begin{lemma} \label{lemma:contradiction}
	Suppose that $\beta \geq 1 + \alpha$,  and that Assumption \ref{assum-2} holds. Then, 
    for any $\kappa > 1 - \tfrac{\alpha}{\beta}$, for all large enough $t$, 
    $\mu(A(t; \kappa)^c ) = 0$ and $\mu(B(t;\kappa)^c) = 0$ almost surely.
\end{lemma}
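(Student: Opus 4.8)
The plan is to run a first Borel--Cantelli argument along a geometric sequence of times, combined with a covering argument that passes from this sequence to all real $t$; the compensator bounds of Lemma~\ref{lemma:compensator} supply the required summability. Fix $\rho_0 > 1$ and set $t_n = \rho_0^n$. For $n \in \N$ define the ``frozen-time'' sets
\begin{align*}
	\widetilde A_n &\coloneqq \{(x,s,z) \colon z\bigl(f_1(x) - f_2(x, t_{n+1}-s)\bigr) > t_n^\kappa,\ s \in (0, t_{n+1}],\ z \in (0,1]\},\\
	\widetilde B_n &\coloneqq \{(x,s,z) \colon z\bigl(f_2(x,-s) - f_2(x, t_{n+1}-s)\bigr) > t_n^\kappa,\ s \leq 0,\ z \in (0,1]\}.
\end{align*}
The first, and main, step is to check that $A(t;\kappa)^c \subseteq \widetilde A_n$ and $B(t;\kappa)^c \subseteq \widetilde B_n$ for every $t \in [t_n, t_{n+1}]$. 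This rests on three monotonicities: $t \mapsto t^\kappa$ is non-decreasing, so the threshold in $A(t;\kappa)^c$ is at least $t_n^\kappa$; the map $u \mapsto f_1(x) - f_2(x,u) = \int_0^u f(x,v)\,\dd v$ is non-decreasing, so $f_1(x) - f_2(x,t-s) \leq f_1(x) - f_2(x,t_{n+1}-s)$ whenever $0 \leq t-s \leq t_{n+1}-s$; and, for $s \leq 0$, $u \mapsto f_2(x,-s) - f_2(x,u-s)$ is non-decreasing. Since a point of $A(t;\kappa)^c$ or $B(t;\kappa)^c$ also satisfies $s \leq t \leq t_{n+1}$, the inclusions follow.

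Next I would estimate $\nu(\widetilde A_n)$ and $\nu(\widetilde B_n)$. Each $\widetilde A_n$ (resp.\ $\widetilde B_n$) has exactly the structure of the set $A(t;\kappa)^c$ (resp.\ $B(t;\kappa)^c$) appearing in Lemma~\ref{lemma:compensator}, except that the threshold $t^\kappa$ is replaced by $t_n^\kappa = \rho_0^{-\kappa} t_{n+1}^\kappa$; since $\rho_0^{-\kappa}\in(0,1)$, the computation there goes through verbatim at the inflated cost of a factor $\rho_0^{\kappa\beta}$. Concretely, on $\widetilde A_n$ one has $1 < \rho_0^{\kappa}\,t_{n+1}^{-\kappa}\,z\bigl(f_1(x)-f_2(x,t_{n+1}-s)\bigr)$, hence $\bone_{\widetilde A_n} \leq \rho_0^{\kappa\beta}\,t_{n+1}^{-\kappa\beta}\,z^{\beta}\bigl(f_1(x)-f_2(x,t_{n+1}-s)\bigr)^{\beta}$; integrating against $\nu$, using $\int_{(0,1]} z^\beta\lambda(\dd z) = m_\beta(\lambda) < \infty$, then \eqref{eq:f-cond-int2} with $p=\beta$, then the elementary inequality $(t\wedge f_1(x))^{\beta} \leq t^{\beta-1-\alpha} f_1(x)^{1+\alpha}$ (valid because $\beta \geq 1+\alpha$), and finally $f_1 \in L^{1+\alpha}(\pi)$, yields
\begin{equation*}
	\nu(\widetilde A_n) \leq c\,\rho_0^{\kappa\beta}\, t_{n+1}^{\,\beta - \alpha - \kappa\beta},
\end{equation*}
and the same bound for $\nu(\widetilde B_n)$ follows in the same way from \eqref{eq:f-cond-int1} together with $(t\wedge f_1(x))^{\beta} f_1(x) \leq t^{\beta-\alpha} f_1(x)^{1+\alpha}$.

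To conclude, note that $\kappa > 1 - \tfrac{\alpha}{\beta}$ is exactly the condition $\rho \coloneqq \kappa\beta - (\beta-\alpha) > 0$, so $\sum_{n\geq 1}\bigl(\nu(\widetilde A_n) + \nu(\widetilde B_n)\bigr) \leq c\,\rho_0^{\kappa\beta}\sum_{n\geq 1}\rho_0^{-(n+1)\rho} < \infty$. By Markov's inequality $\P(\mu(\widetilde A_n)>0) \leq \nu(\widetilde A_n)$ and $\P(\mu(\widetilde B_n)>0) \leq \nu(\widetilde B_n)$, so the first Borel--Cantelli lemma provides an a.s.\ finite $N_0$ with $\mu(\widetilde A_n) = \mu(\widetilde B_n) = 0$ for all $n \geq N_0$; by the inclusions of the first step this gives $\mu(A(t;\kappa)^c) = \mu(B(t;\kappa)^c) = 0$ for all $t \geq t_{N_0}$, as claimed. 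The reason the argument must be organised this way is that one cannot apply Borel--Cantelli directly along $A(n;\kappa)^c$: $t$ ranges over a continuum, and $A(t;\kappa)^c$ is not monotone in $t$, because as $t$ grows the threshold $t^\kappa$ increases but so does $f_1(x)-f_2(x,t-s)$ and the window $s\in(0,t]$ expands. The seemingly cheaper route of bounding the $\nu$-measure of the set of points that lie in $A(t;\kappa)^c$ for \emph{some} large $t$ also fails, since that region can have infinite $\nu$-measure when $\beta > 1+\alpha$; freezing the time at $t_{n+1}$ while lowering the threshold only to $t_n^\kappa$ is precisely what keeps both the covering and the summability under control.
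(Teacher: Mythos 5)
Your argument is correct, and it reaches the conclusion by a somewhat different mechanism than the paper. Both proofs discretize time, apply the first Borel--Cantelli lemma using the polynomial decay $\nu(A(t;\kappa)^c)\leq c\,t^{\beta-\alpha-\kappa\beta}$ (with negative exponent precisely when $\kappa>1-\alpha/\beta$), and then must bridge the continuum of times $t$ between consecutive grid points. You do the bridging by an inclusion: freezing the time argument at $t_{n+1}$ while lowering the threshold only to $t_n^\kappa$ produces a single Poissonian event $\widetilde A_n$ that dominates $A(t;\kappa)^c$ for all $t\in[t_n,t_{n+1}]$, at the price of a harmless constant $\rho_0^{\kappa\beta}$; the monotonicity of $u\mapsto f_1(x)-f_2(x,u)$ and of $u \mapsto f_2(x,-s)-f_2(x,u-s)$ that you invoke is exactly what makes the inclusion work, and your re-derivation of the bound $\nu(\widetilde A_n)\leq c\,\rho_0^{\kappa\beta}t_{n+1}^{\beta-\alpha-\kappa\beta}$ via \eqref{eq:f-cond-int2} with $p=\beta\in[1,2]$ (and \eqref{eq:f-cond-int1} for $\widetilde B_n$) is the same computation as in Lemma~\ref{lemma:compensator}. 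The paper instead works along the polynomial grid $n^d$ with an auxiliary exponent $\kappa'\in(1-\alpha/\beta,\kappa)$, applies Borel--Cantelli to $A(n^d;\kappa')^c$ via \eqref{eq:lambda-plus}--\eqref{eq:lambda-minus}, and rules out points of $A(t;\kappa)^c$ for intermediate $t$ by the contradiction $(1-\tfrac1n)^{d\kappa}<n^{d(\kappa'-\kappa)}$. The two devices (enlarged frozen-time sets versus a slackened exponent $\kappa'$) play the same role; yours avoids introducing $\kappa'$ and arguably makes the covering step more transparent, while the paper's version reuses the statement of Lemma~\ref{lemma:compensator} verbatim rather than repeating its proof with a modified threshold. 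No gaps.
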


\begin{proof}
	Choose $\kappa' \in (1-\alpha/\beta, \kappa)$. Then, by \eqref{eq:lambda-plus},
	\begin{equation*}
		\nu (A(t; \kappa')^c) \leq ct^{\beta - \alpha - \kappa' \beta} \leq 
		ct^{-\delta}, 
	\end{equation*}
	for some $\delta = \delta(\kappa') > 0$. Now, choose $d >0$ such that $d\delta > 1$. By the Borel--Cantelli lemma, it follows that for all large enough $n$, $\mu(A(n^d; \kappa')^c) = 0$ almost surely. Assume that $\mu(A(t; \kappa)^c) \geq 1$ for some $t \in ((n-1)^d, n^d)$. In other words, there exists a point $(\xi_k, \tau_k, \zeta_k) \in A(t; \kappa)^c$ such that for some $n$ we have
	\begin{equation*}
		(n-1)^{d\kappa} < t^\kappa < \zeta_k(f_1(\xi_k) - f_2(\xi_k, t-\tau_k)) 
		\leq \zeta_k(f_1(\xi_k) - f_2(\xi_k, n^d - \tau_k)) < n^{d\kappa'}.
	\end{equation*}	
	This leads to  $(1 - \tfrac{1}{n})^{d\kappa} < n^{d(\kappa' - \kappa)}$,
	which is a contradiction since the left-hand side converges to $1$, whereas the right-hand side converges to $0$. For $\mu(B(t; \kappa)^c)$, we use \eqref{eq:lambda-minus}, and the proof follows by the same argument. 
\end{proof}

Combining the previous results, we now prove the statement of Theorem \ref{tm:infinite-variation} under the assumption that $\lambda $ is supported on $[-1, 1]$.

\begin{cor}
	Assume that $\int_{|z| \leq 1} |z| \lambda (\dd z) = \infty$, $a = 0$, $b = 0$ and that $\lambda $ is supported on $[-1, 1]$. Moreover, suppose that Assumption \ref{assum-2} holds. Then
	\begin{equation*}
		\limsup_{t \to \infty} \frac{|X^*(t)|}{t \log t} \leq 1 \quad \text{a.s.}
	\end{equation*} 
	If $\beta \leq 1 + \alpha$ and there exists $\gamma \in [1,2]$ such that 
	\begin{equation} \label{eq:main-con-2}
		\int_V \int_{|z| \leq 1} |z|^{\gamma} f_1(x)^{\gamma} \bone(|z| f_1(x) > 1) 
		\lambda (\dd z) \pi (\dd x)  < \infty, 
	\end{equation}
	then 
	\begin{equation*}
		\limsup_{t\to \infty} \frac{|X^*(t)|}{t^{1/\gamma} \log t} \leq 1 \quad \text{a.s.} 
	\end{equation*}
	If $\beta > 1 + \alpha$, then for any $\frac{1}{\gamma}>1-\frac{\alpha}{\beta}$
	\begin{equation*}
		\lim_{t\to \infty} \frac{X^*(t)}{t^{1/\gamma}} = 0 \quad \text{a.s.}
	\end{equation*}
\end{cor}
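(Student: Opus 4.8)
The plan is to carry out, for each of the three assertions, a truncation argument at a level $t^{\kappa}$ with $\kappa$ tuned to the target rate, in the spirit of Corollary~\ref{cor:kappa-bound}. Keep the decomposition $X^* = X^*_- + X^*_+$ and, for the chosen $\kappa$, the further split $X^*_{\pm} = X^*_{\pm,<,\kappa} + X^*_{\pm,>,\kappa}$ relative to the sets $A(t;\kappa)$, $B(t;\kappa)$ from~\eqref{eq:S-sets}. For the small part the work is essentially done: whenever $\kappa$ is admissible for Corollary~\ref{cor:kappa-bound} one has $|X^*_{\pm,<,\kappa}(t)| \le (\tfrac14+o(1))\,t^{\kappa}\log t$ a.s., and more flexibly Lemma~\ref{lemma:log-theta} with $\theta \asymp t^{-\kappa}$, together with an exponential Markov inequality along $t_n = n^{d}$ and the first Borel--Cantelli lemma, yields $X^*_{\pm,<,\kappa}(t) = o(t^{\kappa'})$ for any $\kappa' > \kappa$ provided $\kappa > 1-\alpha/\beta$, the passage to all $t$ being supplied by Lemmas~\ref{lemma:delta-t} and~\ref{lemma:supY}. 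For the large part the strategy is to show that for all large $t$ the Poisson measure $\mu$ deposits no points at all in $A(t;\kappa)^c$ and $B(t;\kappa)^c$, so that $X^*_{\pm,>,\kappa}(t)$ coincides with minus its own deterministic compensator, and then to invoke the compensator estimates of Lemma~\ref{lemma:compensator}, whose exponents are arranged so that for the right $\kappa$ they sit strictly below -- or at worst a logarithmic factor below -- the target rate.

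For the second assertion I would take $\kappa = 1/\gamma$. As $\gamma\in[1,2]$ this $\kappa$ lies in $[1/2,1]$, and \eqref{eq:main-con-2} forces $\gamma \le 1+\alpha$ (the computation of Subsection~\ref{subsec-growth}, with a minor adjustment since here $\lambda$ is supported on $[-1,1]$), so $\kappa \ge 1/(1+\alpha) \ge 1-\alpha/\beta$ using $\beta \le 1+\alpha$; hence $\kappa$ is admissible for Corollary~\ref{cor:kappa-bound}. The hypothesis of Lemma~\ref{lemma:mu-comp} with this $\gamma$ is exactly the positive-$z$ part of \eqref{eq:main-con-2} (the negative-$z$ part is symmetric, $\lambda$ being supported on $[-1,1]$), so $\mu(A(t;\kappa)^c) = \mu(B(t;\kappa)^c) = 0$ eventually; Lemma~\ref{lemma:compensator} (regime $\alpha\ge\beta-1$) then bounds the surviving compensators by $c\,t^{1-\kappa\alpha}$, $c\,t^{\kappa(1-\alpha)}$ and $c\,t^{\beta-\alpha+\kappa(1-\beta)}$, each of which is $\le c\,t^{\kappa}$, the nontrivial cases holding because $\kappa \ge \max\{1/(1+\alpha),\,1-\alpha/\beta\}$ in this regime. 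Summing the four contributions gives $\limsup_t |X^*(t)|/(t^{1/\gamma}\log t) \le \tfrac12 \le 1$. The first assertion is then the case $\gamma = 1$ of this when $\beta \le 1+\alpha$ -- and \eqref{eq:main-con-2} with $\gamma=1$ holds automatically there, since $\int_{(1/f_1(x),1]} z\,\lambda(\dd z) \le f_1(x)^{\beta-1} m_\beta(\lambda)$ reduces it to $m_\beta(\lambda)\int_{\{f_1>1\}} f_1^{\beta}\,\dd\pi < \infty$ -- while when $\beta > 1+\alpha$ it follows a fortiori from the third assertion, whose rate $t^{1/\gamma}$ with $1/\gamma<1$ is $o(t\log t)$.

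For the third assertion ($\beta > 1+\alpha$, $1/\gamma > 1-\alpha/\beta$) I would fix $\kappa \in (1-\alpha/\beta,\,\min\{1/\gamma,1\})$; this interval is nonempty because here $1-\alpha/\beta \in [1/2,1]$ (it cannot drop below $1/2$, as that would force $\beta>2$) and $1/\gamma > 1-\alpha/\beta$ by hypothesis. For the small part, Lemma~\ref{lemma:log-theta} gives $\log\E e^{\theta X^*_{\pm,<,\kappa}(t)} \le c\,\theta^2 t^{\kappa(2-\beta)+\beta-\alpha}$, so with $\theta \asymp t^{-\kappa}$ the probability of exceeding $\varepsilon t^{1/\gamma}$ is at most $\exp(-\varepsilon t^{1/\gamma-\kappa} + c\,t^{\beta-\alpha-\kappa\beta})$; the second exponent is negative since $\kappa > 1-\alpha/\beta$, while $1/\gamma-\kappa > 0$, so these probabilities are summable along $t_n = n^d$ and, after smoothing, $X^*_{\pm,<,\kappa}(t) = o(t^{1/\gamma})$ a.s. For the large part, Lemma~\ref{lemma:contradiction} (valid since $\beta\ge1+\alpha$ and $\kappa>1-\alpha/\beta$) gives $\mu(A(t;\kappa)^c)=\mu(B(t;\kappa)^c)=0$ eventually, and Lemma~\ref{lemma:compensator} (regime $\alpha\le\beta-1$) bounds what remains by $c\,t^{\beta-\alpha-\kappa(\beta-1)}$; the exponent equals $1-\alpha/\beta$ at $\kappa = 1-\alpha/\beta$ and is decreasing in $\kappa$ (as $\beta>1$), hence stays $<1/\gamma$ for our $\kappa$, so this is $o(t^{1/\gamma})$. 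Adding up gives $X^*(t)/t^{1/\gamma}\to 0$ a.s.

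The main obstacle is the large-value part $X^*_{\pm,>,\kappa}$: $\kappa$ must be chosen simultaneously small enough that the surviving compensator $\sim t^{\beta-\alpha-\kappa(\beta-1)}$ (or $t^{1-\kappa\alpha}$) falls below the target rate, and large enough that $\mu$ deposits no points in $A(t;\kappa)^c$, $B(t;\kappa)^c$ for all large $t$ (through Lemma~\ref{lemma:mu-comp} or Lemma~\ref{lemma:contradiction}) and that the small part is controlled by Corollary~\ref{cor:kappa-bound}/Lemma~\ref{lemma:log-theta}; one then has to verify, in each regime $\beta \lessgtr 1+\alpha$, that the feasible window for $\kappa$ is nonempty under exactly the stated hypotheses. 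The delicate case is $\alpha=0$ with $\beta>1+\alpha$, where $1-\alpha/\beta = 1$ so that neither Lemma~\ref{lemma:contradiction} (which needs $\kappa>1$) nor Lemma~\ref{lemma:mu-comp} (which needs $\beta\le1+\alpha$) applies directly, and the large part has to be handled by a separate direct estimate, for instance a dyadic decomposition of $A(t;\kappa)^c$ according to the size of $f_1$ combined with a second-moment bound, exploiting that on each dyadic block $f_1$ is comparable to a power of $t$.
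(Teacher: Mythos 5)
Your treatment of the second and third assertions is essentially the paper's own proof: the same truncation at level $t^{\kappa}$ with $\kappa=1/\gamma$ (resp.\ $\kappa\in(1-\alpha/\beta,1/\gamma)$), Lemma \ref{lemma:mu-comp} (resp.\ Lemma \ref{lemma:contradiction}) to kill the Poisson points in $A(t;\kappa)^c$, $B(t;\kappa)^c$, Lemma \ref{lemma:compensator} for the surviving compensators, and Corollary \ref{cor:kappa-bound} (you substitute a direct exponential Chebyshev step via Lemma \ref{lemma:log-theta}, which amounts to the same thing) for the small part; your exponent checks ($1-\kappa\alpha\leq\kappa$ iff $\gamma\leq 1+\alpha$, and $\beta-\alpha-\kappa(\beta-1)$ decreasing in $\kappa$ and equal to $1-\alpha/\beta$ at the endpoint) are correct and match the paper's.

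The one place you genuinely diverge is the first assertion, and it is where a gap appears. The paper proves $\limsup_t|X^*(t)|/(t\log t)\leq 1$ directly by taking $\kappa=1$ in Corollary \ref{cor:kappa-bound} (the admissibility condition $\kappa\geq 1-\alpha/\beta$ is then satisfied for every $\alpha\geq 0$, with equality at $\alpha=0$), without any case split on $\beta$ versus $1+\alpha$. You instead derive it from the other two assertions. Your reduction in the regime $\beta\leq 1+\alpha$ is a nice observation and correct: \eqref{eq:main-con-2} with $\gamma=1$ does hold automatically there, by $\int_{(1/f_1(x),1]}z\,\lambda(\dd z)\leq f_1(x)^{\beta-1}m_\beta(\lambda)$ and $\int_{\{f_1>1\}}f_1^{\beta}\dd\pi<\infty$. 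But in the regime $\beta>1+\alpha$ with $\alpha=0$ the reduction fails: the third assertion only covers $1/\gamma>1-\alpha/\beta=1$, and $t^{1/\gamma}$ with $1/\gamma>1$ is \emph{not} $o(t\log t)$, so the first assertion does not follow a fortiori. Since you also (correctly) flag that your proof of the third assertion itself is incomplete at $\alpha=0$ (Lemma \ref{lemma:contradiction} would need $\kappa>1$, outside the range on which $A(t;\kappa)$ is defined), the first assertion is left unproven precisely in that sub-case, even though it is the unconditional claim of the corollary. The fix is simply to prove the first assertion directly with $\kappa=1$ as the paper does, rather than routing it through the conditional assertions. (To be fair, the $\alpha=0$, $\beta>1+\alpha$ edge case you single out is equally unaddressed in the paper's own one-line invocation of Lemma \ref{lemma:contradiction} with $\kappa>1-\alpha/\beta$; your suggested dyadic workaround is plausible but, as you present it, only a sketch.)
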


\begin{proof}
	Assume $\lambda $ is supported on $(0,1]$. The first claim then follows from Corollary \ref{cor:kappa-bound} with $\kappa = 1$, since 
	\begin{equation*}
		\limsup_{t\to \infty} \frac{|X^*(t)|}{t \log t} \leq 
		\limsup_{t\to \infty} \frac{|X_-^*(t)| + |X_+^*(t)|}{t \log t}  
		\leq  \frac{1}{2} \quad \text{a.s.}
	\end{equation*}	
	
	If $\beta -1 \leq  \alpha$ and \eqref{eq:main-con-2} holds, then $\gamma \leq 1 + \alpha$. By Lemma \ref{lemma:mu-comp} the terms $X^*_{\pm, >, 1/\gamma}(t)$ 
    are negligible in the limit, while Lemma \ref{lemma:compensator} with $\kappa = 1/\gamma$ ensures the same for the compensating terms. Corollary \ref{cor:kappa-bound} with $\kappa = 1/\gamma$ gives the desired result.
	
	For $\beta -1 \geq \alpha$, we use a similar approach. The claim follows from Lemmas \ref{lemma:contradiction}, \ref{lemma:compensator} and Corollary \ref{cor:kappa-bound} with $\kappa > 1 - \alpha/\beta$.
	
	If $\lambda $ is supported on $[-1,1]$, the result can be applied separately to $\mu \bone(z > 0)$ and $\mu \bone(z < 0)$.
\end{proof}	

\begin{proof}[Proof of Theorem \ref{tm:infinite-variation}]
	We decompose $\mu$ into $\mu \bone(|z| \leq 1)$ and $\mu \bone(|z| > 1)$ so that the integrated process can be written as 
	\begin{equation*}
		X^*(t) = X_-^{*, \leq}(t) + X_-^{*, >}(t) + X_+^{*, \leq}(t) + X_+^{*, >}(t). 
	\end{equation*}
	The statement follows from Corollary \ref{cor:kappa-bound} applied to $X_-^{*, \leq}(t)$ and $X_+^{*, \leq}(t)$, and Theorem \ref{tm:finite-var} to $X_-^{*, >}(t)$ and $X_+^{*, >}(t)$.
\end{proof}

\subsection{Proof of Theorem \ref{tm-gaussian}}

Finally, we prove the claim for the purely Gaussian case, that is $a = 0$, $b > 0$ and $\lambda  \equiv 0$. Since $\Lambda$ is Gaussian random measure, we have 
\begin{equation*}
	\E e^{\ii \Lambda(A)} = e^{-b\frac{\theta^2}{2}(\pi  \times \Leb)(A)}, 
\end{equation*}
and for the integrated process,  \cite[Proposition 2.6]{rajput1989} gives
\begin{equation*}
	\log \E e^{\ii \theta X^*(t)} = - b \frac{\theta^2}{2}
	\int_V \int_{\R} \left(\int_0^t f(x, u-s) \bone(s \leq u) \dd u \right)^2   
	\dd s \pi (\dd x).
\end{equation*}
Therefore, $X^*$ is a Gaussian process with mean $0$ and variance given by
\begin{equation*}
	Q(t) \coloneqq \frac{1}{2} \Var(X^*(t)) = \frac{b}{2}
	\int_V \int_{\R} \left(\int_0^t f(x, u-s) \bone(s \leq u) \dd u \right)^2   
	\dd s \pi (\dd x).		
\end{equation*}
Clearly, $Q$ is non-decreasing and $Q(0) = 0$. By \eqref{eq:main-decomp}, it follows that
\begin{align*}
	Q(t) = \; & \frac{b}{2} \int_V \int_{\R} \Big(
	(f_2(x, -s) - f_2(x, t-s)) \bone(s < 0) \\  
	& +(f_1(x) - f_2(x, t-s)) \bone(0 \leq s \leq t) \Big)^2 \dd s\pi (\dd x)	\\ 
	=\; & \frac{b}{2} \int_V \Big( 
	\int_{-\infty}^0 \left(f_2(x, -s) - f_2(x, t-s) \right)^2 \dd s \\ 
	& + \int_0^t \left(f_1(x) - f_2(x, t-s) \right)^2 \dd s 
	\Big) \pi (\dd x) \\ 
	\eqqcolon \; & \frac{b}{2} \int_V (I_1 + I_2) \pi (\dd x).
\end{align*}
An application of Fubini's theorem gives the following bound for $I_1$
\begin{align*}
	t^{-1} I_1 &= t^{-1} \int_0^\infty \left(
	\int_u^{t+ u} f(x, v) \dd v
	\right)^2 \dd u \\ 
	&\leq t^{-1} f_1(x) \int_0^\infty \int_u^{t+u} f(x, v) \dd v \dd u \\ 
	&= f_1(x) \int_0^t f(x, v) \frac{v \wedge t}{t} \dd v, 
\end{align*}
which, combined with Lebesgue's dominated convergence theorem, implies 
that $t^{-1} I_1 \to 0$.
Similar calculation shows that $I_2 \sim t f_1(x)^2$. Therefore, 
\begin{equation*}
	Q(t) \sim \frac{b}{2} t \int_V f_1(x)^2 \pi (\dd x ), \quad t \to \infty.
\end{equation*}
The result now follows from \cite[Theorem 1.1]{orey1972} with 
$v(t) = \frac{b}{2} t \int_V f_1(x)^2 \pi (\dd x)$.

\subsection{Proof of Proposition \ref{prop:supfou-existence}}

We show the claim using \eqref{eq:RR-cond} (\cite[Proposition 34]{barndorff2018book}), 
which boils down to checking the integrability conditions of the form 
\begin{equation} \label{eq:int-cond}
	\int_\R \int_{(0,\infty)} \psi(f(x,u)) \pi (\dd x) \dd u < \infty,
\end{equation}
for some $\psi \geq 0$. Note that if $f(x,u) = f(xu)$ then
\begin{equation*}
	\int_\R \int_{(0,\infty)} \psi(f(x,u)) \pi (\dd x) \dd u 
	= \int_{(0,\infty)} x^{-1} \pi(\dd x) \int_\R 
	\psi( f(y)) \dd y.
\end{equation*}
Therefore, \eqref{eq:int-cond} is equivalent to
\begin{equation} \label{eq:int-cond-fxy}
	\int_{(0,\infty)} x^{-1} \pi (\dd x )  < \infty \quad \text{and } \
	\int_\R \psi( f(y)) \dd y < \infty.
\end{equation}
First, we show the integrability with respect to the large jump part.

\begin{lemma}
	Let $\kappa > 0$. Then
	\begin{equation} \label{eq:Poi-int}
		\int_{(0, \infty) \times (0, \infty) \times 
        (\R \backslash[-1,1])} 
		|z| (xu)^{\kappa - 1} e^{-xu} \mu(\dd x, \dd u, \dd z)
	\end{equation}
	exists if and only if
	\begin{equation*}
		\int_{(0,\infty)} x^{-1} \pi(\dd x) < \infty \quad \text{ and } \quad  
		\int_{|z| > 1} \log |z| \, \lambda(\dd z) < \infty.	
	\end{equation*}
\end{lemma}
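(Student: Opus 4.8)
The plan is to reduce the almost sure existence of the Poisson integral in \eqref{eq:Poi-int} to a deterministic integrability condition, and then to evaluate that condition by separating the three variables. Since the integrand is nonnegative and only the large-jump part of $\mu$ is involved, no compensation is needed, and \eqref{eq:Poi-int} is finite almost surely (and equals $+\infty$ almost surely otherwise) if and only if
\begin{equation*}
	J \coloneqq \int_{(0,\infty)}\int_{|z|>1}\int_0^\infty \Bigl(1 \wedge |z|(xu)^{\kappa-1}e^{-xu}\Bigr)\,\dd u\,\lambda(\dd z)\,\pi(\dd x) < \infty .
\end{equation*}
This is the standard integrability criterion for Poisson integrals, obtained from the first Borel--Cantelli lemma exactly as in the derivation of \eqref{eq:fubini-cond-extra2}. (If $\lambda(\R\setminus[-1,1])=0$ the integral is trivially $0$; we tacitly exclude this degenerate case.)

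Since the kernel depends on $x$ and $u$ only through the product $xu$, the substitution $y=xu$ in the inner integral gives, as in the discussion preceding \eqref{eq:int-cond-fxy},
\begin{equation*}
	\int_0^\infty \Bigl(1 \wedge |z|(xu)^{\kappa-1}e^{-xu}\Bigr)\,\dd u = \frac{1}{x}\,h(|z|), \qquad h(r) \coloneqq \int_0^\infty \bigl(1 \wedge r\,y^{\kappa-1}e^{-y}\bigr)\,\dd y ,
\end{equation*}
and Tonelli's theorem yields $J = m_{-1}(\pi)\,\int_{|z|>1}h(|z|)\,\lambda(\dd z)$. Thus the proof reduces to showing that $h$ is finite and strictly positive on $[1,\infty)$ (it is also nondecreasing, hence locally bounded) and satisfies $h(r)\sim\log r$ as $r\to\infty$. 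Granting this, and recalling $\log|z|\ge 0$ for $|z|\ge 1$ while $\lambda$ is finite on $\{|z|>1\}\cap\{|z|\le 2\}$, one gets $\int_{|z|>1}h(|z|)\,\lambda(\dd z)<\infty \Leftrightarrow \int_{|z|>1}\log|z|\,\lambda(\dd z)<\infty$, which combined with $m_{-1}(\pi)>0$ is exactly the asserted equivalence.

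To establish the asymptotics I would study the superlevel set $E_r \coloneqq \{y>0 \colon r\,g(y)>1\}$ with $g(y)=y^{\kappa-1}e^{-y}$. It is an interval $(y_-(r),y_+(r))$, where $y_-(r)=0$ for $\kappa\le 1$ and $y_-(r)\downarrow 0$ of order $r^{-1/(\kappa-1)}$ for $\kappa>1$, while $y_+(r)$ is the large solution of $g(y)=1/r$, so $y_+(r)=\log r+(\kappa-1)\log\log r+o(\log\log r)\sim\log r$. On $E_r$ the truncation is active and contributes $|E_r|=y_+(r)-y_-(r)\sim\log r$. Off $E_r$ it is inactive: on $(0,y_-(r))$ one bounds $\int_0^{y_-(r)} r\,y^{\kappa-1}e^{-y}\,\dd y\le r\,y_-(r)^{\kappa}/\kappa=O(1)$, and on $(y_+(r),\infty)$ one uses the classical tail asymptotic $\Gamma_\kappa(x)\sim x^{\kappa-1}e^{-x}$ (already invoked for the supfOU kernel) to get $r\int_{y_+(r)}^\infty y^{\kappa-1}e^{-y}\,\dd y=r\,\Gamma_\kappa(y_+(r))\sim r\,g(y_+(r))=1$. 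Hence $h(r)=\log r+O(1)$. Finiteness of $h$ on $[1,\infty)$ is clear, since near $y=0$ the integrand is at most $1$ and near $y=\infty$ it is at most $r\,y^{\kappa-1}e^{-y}$, which is integrable; positivity is obvious.

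I expect the only delicate point to be carrying out the estimate $h(r)\sim\log r$ uniformly across the three regimes $\kappa<1$, $\kappa=1$, $\kappa>1$: one must check that the near-zero contribution stays bounded when $\kappa>1$ (controlling the factor $r\,y_-(r)^{\kappa}$), and that for $\kappa<1$ the blow-up of $g$ at the origin is harmless once truncated by $1\wedge(\cdot)$. Everything else is routine bookkeeping with $\Gamma_\kappa$ and Tonelli's theorem.
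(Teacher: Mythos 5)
Your proposal is correct and follows essentially the same route as the paper: reduce existence of the Poisson integral to the deterministic condition $\int (1\wedge |z|(xu)^{\kappa-1}e^{-xu})\,\dd u\,\lambda(\dd z)\,\pi(\dd x)<\infty$, factor out $m_{-1}(\pi)$ via the substitution $y=xu$, and show $\int_0^\infty(1\wedge r y^{\kappa-1}e^{-y})\,\dd y\sim\log r$. The only (cosmetic) difference is that you estimate the superlevel set of $y^{\kappa-1}e^{-y}$ directly with the incomplete-gamma tail asymptotic, where the paper works with the generalized inverse of $y^{1-\kappa}e^{y}$ and Karamata's theorem; your version is, if anything, slightly more explicit about the lower bound needed for necessity.
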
 

\begin{proof}
	The integral in \eqref{eq:Poi-int} exists if and only if 
	\begin{equation*}
		\int_0^\infty \int_{(0, \infty)} \int_{|z| > 1}
		\left( |z| (xu)^{\kappa - 1} e^{-xu} \wedge 1 \right) 
		\lambda(\dd z)  \pi(\dd x) \dd u  < \infty,
	\end{equation*}
	or equivalently  by \eqref{eq:int-cond-fxy}, $m_{-1}(\pi) < \infty$ and 
	\begin{equation*}
		\int_0^\infty \int_{|z| > 1} (|z| y^{\kappa - 1} e^{-y} \wedge 1) 
		\lambda(\dd z) \dd y < \infty.	
	\end{equation*}
	Fix $\kappa > 0$ and write $h(y) =  y^{1-\kappa} e^y$. Then, 
	for $y \geq \kappa -1$, $h$ is strictly increasing, and thus $h^{\leftarrow}$ exists on $[\kappa -1, \infty)$. By Karamata's theorem \cite[Theorem 1.5.11]{bingham1989rv}, we have that
	\begin{align*}
		\int_{h^{\leftarrow}(|z|)}^\infty e^{-y} y^{\kappa -1} \dd y 
		&= \int_{\exp(h^\leftarrow(|z|))}^\infty u^{-2} (\log u)^{\kappa -1} \dd u \\
		& \sim \left[ u^{-1} (\log u)^{\kappa -1} \right]_{u=\exp(h^\leftarrow (|z|))}
		= |z|^{-1},
	\end{align*}
	where the asymptotic equivalence holds as $|z| \to \infty$. Therefore, for 
	$K > 0$ large enough  
	\begin{align*}
		& \int_{|z| > K}  
		\int_0^\infty  (|z| y^{\kappa - 1} e^{-y} \wedge 1)  \dd y \lambda(\dd z)  \\
		& \leq \int_{|z| > K}  
		\left( h^{\leftarrow}(|z|) + 
		\int_{h^{\leftarrow}(|z|)}^\infty 
		|z| y^{\kappa -1} e^{-y}  \dd y
		\right) \lambda(\dd z) \\
		& \leq \int_{|z| > K} \left( h^\leftarrow (|z|) + 2 \right) 
		\lambda(\dd z).
	\end{align*}
	Since $h^\leftarrow (|z|) \sim \log|z|$, as $|z| \to \infty$,
    the latter integral is finite if and only 
	if $\int_{|z| > 1 } \log |z| \lambda(\dd z) < \infty$, as claimed. Finally, for the rest of the integral, we have that
	\begin{align*}
		& \int_{1 < |z| \leq K}  \int_0^\infty  
		(|z| y^{\kappa - 1} e^{-y} \wedge 1) \dd y \lambda(\dd z)  \\
		& \leq \int_{1 < |z| \leq K} K \int_{0}^\infty y^{\kappa -1} e^{-y} 
		\dd y \lambda(\dd z) < \infty.
	\end{align*}
\end{proof} 

For the small jump part, we have the following result.
\begin{lemma} \label{lemma:supfOU-2}
	The compensated integral 
	\begin{equation} \label{eq:fsupOU-int}
		\int_{(0, \infty) \times (0, \infty) \times [-1,1]} 
		|z| (xu)^{\kappa - 1} e^{-x u} 
        (\mu - \nu) (\dd x, \dd u, \dd z) 
	\end{equation}
	exists if and only if $m_{-1}(\pi) < \infty$ and 
	\begin{itemize}
    \item[(i)] $\kappa > 1/2$, or
    \item[(ii)] $\kappa = 1/2$ and $\int_{|z|\leq 1} z^2 \log |z|^{-1} \lambda (\dd z) < \infty$, or
	\item[(iii)] $\kappa < 1/2$ and 
    \begin{equation*}
		\int_{|z| \leq 1 } |z| ^{\frac{1}{1-\kappa}} \lambda(\dd z) < \infty.
	\end{equation*}
    \end{itemize}
\end{lemma}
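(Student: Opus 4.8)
The plan is to reduce the statement, via the $\Lambda$-integrability criterion \eqref{eq:RR-cond} and the scaling identity \eqref{eq:int-cond-fxy}, to a one-dimensional asymptotic estimate. Write $g(y)=y^{\kappa-1}e^{-y}\bone(y>0)$, so that the integrand in \eqref{eq:fsupOU-int} is $|z|\,g(xu)$. For a pure compensated integral we may take $a=b=0$, and combining the $U$- and $V_0$-terms of \eqref{eq:RR-cond} shows (as in \cite{rajput1989}) that \eqref{eq:fsupOU-int} exists if and only if
\begin{equation*}
	\int_{(0,\infty)}\int_{(0,\infty)}\int_{|z|\le1}\bigl(z^2 g(xu)^2\wedge |z|\,g(xu)\bigr)\,\lambda(\dd z)\,\dd u\,\pi(\dd x)<\infty.
\end{equation*}
Since this integrand depends on $(x,u)$ only through the product $xu$ and $z^2w^2\wedge|z|w$ vanishes at $w=0$, \eqref{eq:int-cond-fxy} applies for each fixed $z$; integrating in $z$ afterwards, the condition becomes $m_{-1}(\pi)<\infty$ together with
\begin{equation*}
	\int_{|z|\le1}J(z)\,\lambda(\dd z)<\infty,\qquad
	J(z)\coloneqq\int_0^\infty\bigl(z^2 g(y)^2\wedge |z|\,g(y)\bigr)\,\dd y.
\end{equation*}
It therefore suffices to determine the order of $J(z)$ as $z\downarrow0$; the range $|z|\in[\delta,1]$ is harmless since $J$ is bounded there and $\lambda([\delta,1])<\infty$.

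To estimate $J(z)$, I would split the $y$-integral at the level set $\{g=1/|z|\}$, using $z^2g(y)^2$ where $g(y)\le 1/|z|$ and $|z|g(y)$ where $g(y)>1/|z|$. If $\kappa\ge1$ the kernel $g$ is bounded, so for $|z|$ small enough $|z|g\le1$ throughout and $J(z)=z^2\int_0^\infty y^{2\kappa-2}e^{-2y}\,\dd y\asymp z^2$, the integral converging because $2\kappa-2>-1$. If $\kappa<1$, then $g$ is strictly decreasing from $+\infty$ to $0$ with $g(y)\sim y^{\kappa-1}$ as $y\downarrow0$, whence $g^{\leftarrow}(w)\asymp w^{-1/(1-\kappa)}$ as $w\to\infty$; setting $y_0=y_0(z)=g^{\leftarrow}(1/|z|)\asymp|z|^{1/(1-\kappa)}$, we write $J(z)=\int_0^{y_0}|z|\,g(y)\,\dd y+\int_{y_0}^\infty z^2 g(y)^2\,\dd y$. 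Using $e^{-y}\asymp1$ on $(0,y_0]$, the first term is $\asymp|z|\int_0^{y_0}y^{\kappa-1}\,\dd y\asymp|z|\,y_0^{\kappa}\asymp|z|^{1/(1-\kappa)}$, while $\int_{y_0}^\infty y^{2\kappa-2}e^{-2y}\,\dd y\asymp1$ when $\kappa>1/2$, $\asymp\log(1/y_0)\asymp\log(1/|z|)$ when $\kappa=1/2$, and $\asymp y_0^{2\kappa-1}$ when $\kappa<1/2$; multiplying the last by $z^2$ and using $2+\tfrac{2\kappa-1}{1-\kappa}=\tfrac1{1-\kappa}$ gives $|z|^{1/(1-\kappa)}$. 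Taking the larger of the two contributions (recall $\tfrac1{1-\kappa}>2$ for $\kappa>\tfrac12$), we obtain, as $z\downarrow0$,
\begin{equation*}
	J(z)\asymp\begin{cases}z^2,&\kappa>1/2,\\ z^2\log(1/|z|),&\kappa=1/2,\\ |z|^{1/(1-\kappa)},&\kappa<1/2.\end{cases}
\end{equation*}

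Finally, since $\int_{|z|\le1}z^2\,\lambda(\dd z)<\infty$ for every L\'evy measure, $\int_{|z|\le1}J(z)\,\lambda(\dd z)<\infty$ holds automatically when $\kappa>1/2$, is equivalent to $\int_{|z|\le1}z^2\log|z|^{-1}\,\lambda(\dd z)<\infty$ when $\kappa=1/2$, and to $\int_{|z|\le1}|z|^{1/(1-\kappa)}\,\lambda(\dd z)<\infty$ when $\kappa<1/2$; combined with $m_{-1}(\pi)<\infty$ this is exactly (i)--(iii). The main obstacle is the sharp two-sided estimate of $J(z)$ near $0$: one must control $g^{\leftarrow}$ at infinity, keep the logarithmic correction exactly at $\kappa=1/2$, and check that the ``inner'' part $\int_0^{y_0}|z|g$ never dominates for $\kappa>1/2$. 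The reduction via \eqref{eq:RR-cond} and \eqref{eq:int-cond-fxy} is routine by comparison.
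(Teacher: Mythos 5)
Your proposal is correct and follows essentially the same route as the paper: both reduce, via the scaling identity \eqref{eq:int-cond-fxy}, to the finiteness of $m_{-1}(\pi)$ together with a one-dimensional integral in $y$ against $\lambda(\dd z)$, and both extract the same asymptotics ($z^2$, $z^2\log|z|^{-1}$, $|z|^{1/(1-\kappa)}$) from the behaviour of $y^{\kappa-1}e^{-y}$ near the origin. The only difference is organizational: you merge the $U$- and $V_0$-conditions into the single compensated-Poisson criterion $\nu(h^2\wedge h)<\infty$, which is legitimate here because the integrand $|z|(xu)^{\kappa-1}e^{-xu}$ is nonnegative (so no cancellation can occur in $U$), whereas the paper bounds $|U(f_\kappa)|$ and $V_0(f_\kappa)$ separately and obtains the identical estimates.
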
 

\begin{proof}
	  For $y > 0$, denote $f_\kappa(y) = y^{\kappa - 1} e^{-y}$.
    In our setup with $a = b = 0$ and $\lambda$ concentrated on $[-1,1]$ the functions $U$ and $V_0$ in \eqref{eq:UV0} have the form
	\begin{align*}
		U(y) & =  \int_{|z| \leq 1} (z y \bone( |zy| \leq 1) - z y) \lambda(\dd z)
		=  - \bone (y > 1) y \int_{|z| \in (y^{-1}, 1]} z \lambda(\dd z), \\ 
		V_0(y) & = \int_{|z| \leq 1} (1 \wedge y^2 z^2) \lambda(\dd z).	
	\end{align*}
    Conditions \eqref{eq:RR-cond} combined with \eqref{eq:int-cond-fxy} imply that the 
    compensated integral in \eqref{eq:fsupOU-int} exists if and only if $m_{-1}(\pi) < \infty$ and 
    \begin{equation} \label{eq:fsupOU-RR}
    \begin{split}
    \int_0^\infty |U(f_\kappa(y)) | \dd y < \infty, \\
    \int_0^\infty V_0(f_\kappa(y)) \dd y < \infty.
    \end{split}
    \end{equation}
	
    First, we check that the first integral in \eqref{eq:fsupOU-RR} is finite under our assumptions.
    If $f_\kappa(y) \leq K$ for some $K > 0$, then 
	\begin{align*}
		\int_0^\infty |U(f_\kappa(y))| \dd y  &= 
		\int_0^\infty \bone(f_\kappa (y) > 1) f_\kappa(y)  \left| \int_{|z| \in (f_\kappa(y)^{-1}, 1]} z \lambda(\dd z) \right| \\
        & \leq K \int_{|z| \in (1/K, 1]} |z| \lambda(\dd z) \Leb ( f_\kappa(y) > 1) < \infty.
	\end{align*}
	Therefore, by \eqref{eq:int-cond-fxy}, the integral exists whenever $\kappa \geq 1$.
    
    For $\kappa < 1$, we have that
	\begin{equation} \label{eq:fsupOU-aux1}
    \int_0^\infty |U(f_\kappa(y))| \dd y  
	\leq \int_{|z| \leq 1} |z|  \lambda(\dd z) \int_0^\infty f_\kappa(y) \bone ( f_\kappa(y) > |z|^{-1}) \dd y.
	\end{equation}
    Since $f_\kappa(y) \sim y^{\kappa - 1}$ as $y \downarrow 0$, as $x \to \infty$
    \begin{equation*}
        \int_{0}^\infty f_\kappa(y) \bone ( f_\kappa (y) > x) \dd y \sim \int_{0}^{x^{-1/(1-\kappa)}} y^{\kappa -1} \dd y \sim 
        \kappa^{-1} x^{-\frac{\kappa}{1-\kappa}}.  
    \end{equation*}
    Therefore, by \eqref{eq:fsupOU-aux1}
    \begin{equation*}
    \int_0^\infty |U(f_\kappa(y))| \dd y \leq c \int_{|z| \leq 1} |z|^{\frac{1}{1-\kappa}} \lambda(\dd z) < \infty,    
    \end{equation*}
	where for $\kappa \geq 1/2$ the latter integral is finite, since $\lambda$ is a L\'evy measure,
    while for $\kappa < 1/2$ the finiteness follows from the assumption.
    
    Next, consider the second integral in \eqref{eq:fsupOU-RR}. 
    We have $\int_0^\infty f_\kappa (y)^2 \dd y < \infty$ for $\kappa > 1/2$. Thus, assume $\kappa \leq 1/2$. We have
	\begin{equation*} \label{eq:fsupOU-aux2}
    \begin{split}		
		\int_0^\infty V_0(f_\kappa(y)) \dd y 
		& = 
		\int \bone (f_\kappa(y) \leq 1) \int_{|z| \leq 1} z^2  
		f_\kappa(y)^2 \lambda(\dd z) \dd y \\
		& \quad + \int \bone (f_\kappa(y) > 1) \int_{|z| \leq 1} (1 \wedge f_\kappa(y)^2 z^2) \lambda(\dd z) \dd y.
	\end{split}
    \end{equation*}
    The first integral is finite since $\int_0^\infty f_\kappa(y) \dd y < \infty$. For the 
	second term, by the asymptotic relation $f_\kappa(y) \sim y^{\kappa -1}$ as $y \downarrow 0$, 
    we have as $x \to \infty$
    \[
    \begin{split}
    \int_0^\infty f_\kappa(y)^2 \bone(f_\kappa(y) < x) \dd y 
    & \sim \int_{x^{-\frac{1}{1-\kappa}}}^\infty f_\kappa(y)^2 \dd y \\
    & \sim \int_{x^{-\frac{1}{1-\kappa}}}^1 y^{2\kappa - 2} \dd y \\
    & \sim \begin{cases} 
    \frac{1}{1-2 \kappa} x^{-\frac{1- 2\kappa}{1 - \kappa}}, & \kappa < 1/2, \\
    2 \log x, & \kappa = 1/2.
    \end{cases}
    \end{split}
    \]
    Therefore, for $\kappa < 1/2$ we have  
	\begin{align*}
		& \int \bone (f_\kappa(y) > 1) \int_{|z| \leq 1} (1 \wedge f_\kappa(y)^2 z^2) 
		\dd y \lambda(\dd z) \\
		& = \int_{|z| \leq 1} 
		\left( \Leb(f_\kappa^{-1}(|z|^{-1}, \infty)) + \int f_\kappa(y)^2 z^2 \bone(f_\kappa(y) |z| \leq 1) \dd y
		\right) \lambda(\dd z)  \\
		& = c \int_{|z| \leq 1} \left( |z|^{\frac{1}{1-\kappa}} + 
		z^2 |z|^{\frac{2\kappa - 1}{1-\kappa}} \right) \lambda(\dd z) \\
		&  = c \int_{|z| \leq 1} |z|^{ \frac{1}{1-\kappa}} \lambda(\dd z) < \infty.
	\end{align*}
    For $\kappa = 1/2$ an additional $\log |z|^{-1}$ factor appears above. Summarizing, if the assumptions on $\pi$ and $\lambda$
    are satisfied, then the process exists. The necessity of the assumptions follows from the last calculations.
\end{proof}

\noindent\textbf{Funding:} Danijel Grahovac was supported by the Croatian Science Foundation under the project Scaling in Stochastic Models (HRZZ-IP-2022-10-8081).


\end{document}